\documentclass[12pt]{amsart}

\theoremstyle{definition}
\theoremstyle{plain}

\usepackage[left=2.8cm,top=2.8cm,right=2.8cm,bottom = 2.4cm]{geometry}
\usepackage{amsfonts}  
\usepackage{amsmath}
\usepackage{amscd}
\usepackage{amssymb} 
\usepackage{amsthm} 
\usepackage{bbm}
\usepackage{bm}
\usepackage{enumerate}
\usepackage{enumitem}
\usepackage{hyperref}
\usepackage{latexsym}
\usepackage{mathabx}
\usepackage{mathdots}
\usepackage{mathtools}
\usepackage{xfrac}
\usepackage[all,frame]{xy}
\usepackage{hyperref}

\allowdisplaybreaks

\DeclareMathOperator{\adj}{adj}
\DeclareMathOperator{\cof}{cofact}
\DeclareMathOperator{\Res}{Res}

\DeclareMathOperator{\Sp}{Sp}

\newcommand{\Ahat}{\widehat{A}}
\newcommand{\alphat}{\widetilde{\alpha}}

\newcommand{\bbar}{\overline{b} }

\newcommand{\betat}{\widetilde{\beta}}
\newcommand{\bfm}{\bm{m}}
\newcommand{\bfy}{\bm{y}}

\newcommand{\C}{\mathbb{C}}

\newcommand{\calC}{\mathcal{C}}
\newcommand{\calE}{\mathcal{E}}
\newcommand{\calF}{\mathcal{F}}

\newcommand{\calH}{\mathcal{H}}
\newcommand{\calK}{\mathcal{K}}
\newcommand{\calL}{\mathcal{L}}
\newcommand{\calM}{\mathcal{M}}
\newcommand{\calN}{\mathcal{N}}
\newcommand{\calD}{\mathcal{D}}

\newcommand{\calX}{\mathcal{X}}

\newcommand{\Cg}{\mathfrak{C}_{g}}

\newcommand{\Chat}{\widehat{\C}}

\newcommand{\D}{\mathcal{D}}
\newcommand{\del}{\partial}
\newcommand{\delx}{\nabla(x)}

\newcommand{\delMg}{\nabla_{\hspace{-1 mm}\Mg}}

\newcommand{\End}{\textup{End}}

\newcommand{\half}{\frac{1}{2}}

\newcommand{\I}{\mathcal{I}}
\newcommand{\Id}{\textup{Id}}
\newcommand{\Ig}{\textup{I}_{g}}
\newcommand{\im}{\textup{i}}
\newcommand{\Ip}{\I_{+}}

\newcommand{\K}{\mathcal{K}}

\newcommand{\Mg}{{\mathfrak{M}}}

\newcommand{\nabmy}[2]{\nabla^{(#1)}_{#2}}
\newcommand{\nabmyt}[2]{\widetilde{\nabla}^{(#1)}_{#2}}
\newcommand{\nablat}{\widetilde{\nabla}}
\newcommand{\nablahat}{\widehat{\nabla}}

\newcommand{\nut}{\widetilde{\nu}}

\newcommand{\omegat}{\widetilde{\omega}}
\newcommand{\Omo}{\Omega_{0}}
\newcommand{\Omt}{\widetilde{\Omega}}

\newcommand{\Psihat}{\widehat{\Psi}}
\newcommand{\PsiM}{{\Psi}_{\Mg}}
\newcommand{\PsiMt}{\widetilde{\Psi}_{\Mg}}

\newcommand{\Schg}{\mathfrak{S}_{g}}

\newcommand{\Sg}{\mathcal{S}^{(g)}}

\newcommand{\Szero}{\mathcal{S}^{(0)}}
\newcommand{\SL}{\textup{SL}}

\newcommand{\tauK}{\tau_{\calK}}
\newcommand{\tpi}{2\pi \im} 

\newcommand{\vac}{\mathbbm{1}}

\newcommand{\wt}{\textup{wt}}

\newcommand{\Z}{\mathbb{Z}}
\newcommand{\zbar}{\overline{z}}
\newcommand{\Zg}{Z} 

\newcommand{\Zzero}{Z^{(0)}}

\newcommand{\edge}[2]{\xy(0,0)*[o]=<0.4pt>+{\cir<3pt>{}}="a"*+!R{#1\,}; (10,0)*[o]=<0.4pt>+{\cir<3pt>{}}="b"*+!L{\,#2}; \ar "b";"a";\endxy}

\newcommand{\sloop}[2]{
\xy (0,-2.5)*[o]=<0.5pt>+{\cir<3pt>{}}="b"*+!#2{#1\,};
\ar@(ul,ur) "b";"b"
\endxy}

\newcommand{\twocycle}[2]{\xy (0,0)*[o]=<0.4pt>+{\cir<3pt>{}}="a"*+!R{#1\,}; (10,0)*[o]=<0.4pt>+{\cir<3pt>{}}="b"*+!L{\,#2}; \ar@/^/ "b";"a";\ar@/^/ "a";"b";\endxy}

\newcommand{\degenchain}[2]{\xy(0,0)*{\cir<3pt>{}}="a"*+!#2{\,#1};\endxy}

\newtheorem{corollary}{Corollary}[section]

\newtheorem{lemma}{Lemma}[section]

\newtheorem{proposition}{Proposition}[section]
\newtheorem{remark}{Remark}[section]
\newtheorem{theorem}{Theorem}[section]

\title{Genus $g$ Virasoro Correlation Functions for Vertex Operator Algebras}
\date{}
\begin{document}
	\author{Michael P. Tuite}
\address{School of Mathematical and Statistical Sciences\\ 
	University of Galway, Galway H91 TK33, Ireland}
\email{michael.tuite@universityofgalway.ie}
\author[Michael Welby]{Michael Welby$^{\dagger}$}
\thanks{$^{\dagger}$Supported by an Irish Research Council Government of Ireland Postgraduate Scholarship.}
\email{michael.welby5@gmail.com}

\begin{abstract}
For a simple, self-dual, strong CFT-type vertex operator algebra (VOA) of central charge $c$, we describe  the Virasoro $n$-point correlation function on a genus $g$ marked Riemann surface in the Schottky uniformisation. We show that this $n$-point function determines the correlation functions for all Virasoro vacuum descendants. Using our recent work on genus $g$ Zhu recursion, we show that the Virasoro $n$-point function is determined by a differential operator $\calD_{n}$ acting on the genus $g$ VOA partition function normalised by the Heisenberg partition function to the power of $c$. We express $\calD_{n}$ as the sum of weights over certain  Virasoro graphs where the weights explicitly depend on $c$, the classical bidifferential of the second kind, the projective connection, holomorphic 1-forms and derivatives with respect to any $3g-3$ locally independent period matrix elements. We also describe the modular properties of $\calD_{n}$  under a homology base change.
\end{abstract}

\maketitle

\section{Introduction}
A vertex operator algebra (VOA) (e.g.~\cite{K,LL,MT1}) is an algebraic theory closely related to conformal field theory (CFT) in physics e.g. \cite{DFMS}. An essential ingredient in both theories is the Virasoro vector whose vertex operator modes generate a Virasoro algebra of some central charge $c$. Virasoro $n$-point correlation functions on genus zero and one Riemann surfaces have long been studied in the CFT and VOA literature. For instance, they occur in describing the Kac determinant and genus zero CFT Ward identities (e.g. \cite{DFMS}) and VOA modular differential equations arising from genus one Zhu recursion \cite{Z1}.
In this paper we apply recent results from \cite{TW1} concerning Zhu recursion theory for correlation functions on Riemann surfaces of genus $g\ge 2$ constructed by a Schottky uniformatisation. 
In particular, we consider the genus $g$ Virasoro $n$-point function from which all correlation functions for Virasoro vacuum descendants can be determined.
We show that this is expressible as an order $n$ differential operator acting on a suitably normalised genus $g$ partition function where the differential operator is explicitly described as a sums of certain weights of Virasoro graphs previously introduced in \cite{HT} for genus zero and one Virasoro $n$-point functions.  
Here the graph weights are defined in terms of some classical differential forms and differential operators with respect to $3g-3$ locally independent period matrix elements  on the  Riemann surface.

In Section 2 we briefly review the classical Schottky uniformisation of a genus $g$ marked Riemann surface $\Sg$~\cite{Fo,FK, Bo}, where we sew $g$ handles to a Riemann sphere, expressed in terms of $3g$ Schottky sewing parameters.
We review the classical bidifferential of the second kind, the projective connection, holomorphic 1-forms and the period matrix. These feature in some differential equations later in Section~2 and also enter into the description of Virasoro $n$-point functions in Section~4.
We review properties of the Bers   quasidifferential $(N,1-N)$-form $\Psi_{N}(x,y)$ \cite{Be1,Be2,TW2}. $\Psi_{N}$ and an associated spanning set of holomorphic $N$ forms appear in the genus $g$ Zhu recursion formula of \cite{TW1}. 
$\Psi_{2}(x,y)$ plays a crucial role in this paper since we exploit Zhu recursion for Virasoro vector $\omega$ insertions. 
Using $\Psi_{2}(x,y)$ and its associated  $2$-form spanning set, 
we define the fundamental differential operators $\nabla(x)$ and  $\nabla_{\bm{y}}^{(\bm{m})}(x)$  for variations in the Schottky parameters and local coordinates $\bm{y}=y_{1},\ldots, y_{n}$ on  $\Sg$. 
$\nabla_{\bm{y}}^{(\bm{m})}(x)$ maps any meromorphic form of weight $(\bm{m})=(m_{1},\ldots,m_{n})$ in $\bm{y}=y_{1},\ldots, y_{n}$ to a meromorphic form  of weight $(2,\bm{m})$ in $x,\bm{y}$ \cite{TW2}. 
 Section~2 also contains some refinements of \cite{TW2} where,  using global $\SL(2,\C)$ M\"obius invariance, we introduce differential operators $\nabla_{\Mg}(x)$ and  $\nabla_{\Mg,\bm{y}}^{(\bm{m})}(x)$ in terms of local variations in Schottky space $\Schg$  (the $\SL(2,\C)$ quotient of the Schottky parameter space) and local  coordinates $\bm{y}$ on  $\Sg$. We further show that $\nabla_{\Mg}(x)$ can be expressed in terms of variations of any local coordinates on moduli space $\Mg$. 
 Lastly, using these operators, we describe some differential equations for classical differentials \cite{TW1,TW2}.

Section~3 begins with a brief review of Vertex Operator Algebra (VOA) theory. 
For a VOA $V$ which is simple, self-dual ($V$ isomorphic to its dual module $V'$) of strong CFT-type, we define the partition function $Z_{V}$ and $n$-point correlation function $\calF_{V}(\bm{v},\bm{z})$ on a genus $g$ Riemann surface where $\bm{v}=v_{1},\ldots,v_{n}\in V$ are inserted at $\bm{z}=z_{1},\ldots,z_{n}$, respectively. 
We review genus $g$ Zhu recursion \cite{TW1} which is an expansion of a $(n+1)$-point correlation function
$\calF_{V}(u,x;\bm{v};\bm{z})$, for $u\in V$ quasiprimary of weight $N$,
  in terms of $n$-point functions dependent on $\bm{z}$ with universal coefficients given by $z_{k}$ derivatives of the Bers quasiform $\Psi_{N}(x,z_{k}) $ and holomorphic $N$-forms in $x$.  

In Section~4 we consider the Virasoro $n$-point correlation function 
$\calF_{V}(\bm{\omega,z})$ with $n$ insertions of the Virasoro vector $\omega$.
We show that $\calF_{V}(\bm{\omega,z})$ is a generating function for  all Virasoro vacuum descendant correlation functions.
Genus $g$ Zhu recursion implies a recursive conformal Ward identity for $\calF_{V}(\bm{\omega,z})$  involving the differential operators $\nabla(x)$ and $\nabla^{(\bm{m})}_{\bm{z}}(x)$ of Section~2 e.g.  $\calF_{V}(\omega,x)=\nabla(x)Z_{V}$. The main result of this paper is an explicit expression for the normalised $n$-point function $G_{n}(\bm{z}):=\calF_{V}(\omega,x)Z_{M}^{-c}$ where $M$ is the rank 1 Heisenberg VOA and $c$ the central charge of $V$.
We show that  $G_{n}(\bm{z})=\calD_{n}(\bm{z})\Theta_{V}$ for normalized partition function $\Theta_{V}:=Z_{V} Z_{M}^{-c}$ and where $\calD_{n}(\bm{z})$ is an order $n$  differential operator with respect to $3g-3$ locally independent period matrix elements. $\calD_{n}(\bm{z})$ is symmetric under permutations of $\bm{z}$ and is defined as the sum of weights of so-called Virasoro graphs \cite{HT} where the weights are defined in terms of $c$, the classical differentials of Section~2 and variations of the $3g-3$ independent period matrix elements.

 In Section~5 we consider a change of Riemann surface marking in the Schottky scheme described by the action of the symplectic modular group $\Sp(2g,\Z)$ on the given homology basis. We show that $\nabla_{\Mg}(x)$ and $ \nabla_{\Mg,\bm{y}}^{(\bm{m})}(x)$ are modular invariant whereas $\calD_{n}(\bm{z})$ is invariant up to a $c$-dependent Teichm\"uller  form-like automorphic factor. We conclude with some remarks about the significance of our results.

	\section{Differential Structures on Riemann Surfaces}
\label{sec:Genusg}
\subsection{Notational conventions}
Define the following indexing sets for integers $g,N\ge 1$ 
\begin{align}
	\label{eq:ILconv}
	\I:=\{-1,\ldots,- g,1,\ldots,g\},\quad
	\Ip:=\{1,\ldots,g\},\quad 
	\calL_{N}:=\{0,1,\ldots,2N-2\}.
\end{align}
For functions $f(x)$ and $g(x,y)$ and integers $i,j\ge 0$ we define
\begin{align*}
	&f^{(i)}(x):=\del^{(i)} f(x) :=\del_{x}^{(i)} f(x):= \frac{1}{i!}\frac{\del^{i}}{\del x^{i}}f(x),
	\\
	&g^{(i,j)}(x,y):=\del_{x}^{(i)} \del_{y}^{(j)}  g(x,y).
\end{align*} 

\subsection{The Schottky uniformisation of a Riemann surface}
\label{subsec:Schottky}
Consider a compact marked Riemann surface $\Sg$ of genus $g$, e.g.~\cite{FK,Mu1,Fa,Bo},  
with canonical homology basis $\alpha_{a}$, $\beta_{a}$ for $a\in\Ip$. 
We review the  construction of a genus $g$ Riemann surface $\Sg$ using the Schottky uniformisation where we sew $g$ handles to the Riemann sphere $\Szero\cong\Chat:=\C\cup \{\infty\}$ e.g.~\cite{Fo, Bo}. Every Riemann surface can be (non-uniquely) Schottky uniformised~\cite{Be2}.

For  $a\in\I$  let $\calC_{a}\subset \Szero$ be $2g$ non-intersecting Jordan curves where $z\in \calC_{a}$, $z'\in \calC_{-a}$ for $a\in\Ip$ are identified by a sewing relation
\begin{align}\label{eq:SchottkySewing}
	\frac{z'-W_{-a}}{z'-W_{a}}\cdot\frac{z-W_{a}}{z-W_{-a}}=q_{a},\quad a\in\Ip,
\end{align}
for $q_{a}$ with $0<|q_{a}|<1$ and $W_{\pm a}\in\Chat$. 
Thus  $z'=\gamma_{a}z$  for $a\in\Ip$ with 
\begin{align*}
	\gamma_{a}:=\sigma_{a}^{-1}
	\begin{pmatrix}
		q_{a}^{1/2} &0\\
		0 &q_{a}^{-1/2}
	\end{pmatrix}
	\sigma_{a},\quad 
	\sigma_{a}:=(W_{-a}-W_{a})^{-1/2}\begin{pmatrix}
		1 & -W_{-a}\\
		1 & -W_{a}
	\end{pmatrix}.
\end{align*}
The points $\sigma_{a}(W_{-a})=0$ and $\sigma_{a}(W_{a})=\infty$ are, respectively, attractive and repelling fixed points of $Z\rightarrow Z'=q_{a}Z$ for  $Z=\sigma_{a} z$ and $Z'=\sigma_{a} z'$.
$W_{-a}$ and  $W_{a}$  are the corresponding fixed points for $\gamma_{a}$. We identify the standard homology cycles $\alpha_{a}$  with $\calC_{-a}$ and  $\beta_{a}$ with a path connecting  $z\in  \calC_{a}$ to $z'=\gamma_{a}z\in  \calC_{-a}$.

The genus $g$ Schottky group $\Gamma$  is the free group with generators $\gamma_{a}$ for $a \in \Ip$.
Define $\gamma_{-a}:=\gamma_{a}^{-1}$. The independent elements of $\Gamma$ are reduced words of length $k$
of the form $\gamma=\gamma_{a_{1}}\ldots \gamma_{a_{k}}$ where $a_{i}\neq -a_{i+1}$ for each $i=1,\ldots,k-1$. 
We let $\Lambda(\Gamma)$ denote the  limit set\footnote{Note that for $g=1$, $\Lambda(\Gamma)$ is the empty set.}  of $\Gamma$ i.e. the set of limit points of the action of $\Gamma$ on $\Chat$. Then $\Sg\simeq\Omo/\Gamma$ where $\Omo:=\Chat-\Lambda(\Gamma)$.

Define $w_{a}:=\gamma_{-a}(\infty)$. Using~\eqref{eq:SchottkySewing} we find 
\begin{align}\label{eq:wadef}
	w_{a}=\frac{W_{a}-q_{a}W_{-a}}{1-q_{a}},\quad a\in\I,
\end{align}
where we define $q_{-a}:=q_{a}$.
Then~\eqref{eq:SchottkySewing} is equivalent to
\begin{align}\label{eq:SchottkySewing2}
	(z'-w_{-a})(z-w_{a})=\rho_{a},\quad a\in\Ip,
\end{align}
with 
\begin{align}\label{eq:rhoadef}
	\rho_{a}:=-\frac{q_{a}(W_{-a}-W_{a})^{2}}{(1-q_{a})^{2}}
	=-\frac{q_{a}(w_{-a}-w_{a})^{2}}{(1+q_{a})^{2}}.
\end{align}
Equation~\eqref{eq:SchottkySewing2} implies
\begin{align*}
	\gamma_{a}z=w_{-a}+\frac{\rho_{a}}{z-w_{a}}.
\end{align*}
It is convenient (but not necessary) to choose the Jordan curve $\calC_{a}$ to be the  boundary of the disc $\Delta_{a}$ with centre $w_{a}$ and radius $|\rho_{a}|^{\frac{1}{2}}$. Then 
$\gamma_{a}$ maps the exterior (interior) of $\Delta_{a}$  to the interior (exterior) of $\Delta_{-a}$ since
\begin{align*}
	|\gamma_{a}z-w_{-a}||z-w_{a}|=|\rho_{a}|.
\end{align*}
Furthermore,  the discs $\Delta_{a}$, $\Delta_{b}$ are non-intersecting if and only if
\begin{align}
	\label{eq:JordanIneq}
	|w_{a}-w_{b}|>|\rho_{a}|^{\frac{1}{2}}+|\rho_{b}|^{\frac{1}{2}},\quad \forall \;a\neq b.
\end{align} 
We define $\Cg$ to be the set 
$\{ (w_{a},w_{-a},\rho_{a})|\,a\in\Ip\}\subset \C^{3g}$ satisfying~\eqref{eq:JordanIneq}. We refer to $\Cg$ as the Schottky parameter space.

The cross ratio~\eqref{eq:SchottkySewing} is M\"obius  invariant for $\sigma =\left(\begin{smallmatrix}A&B\\C&D\end{smallmatrix}\right)\in\SL_{2}(\C)$ with $(z,z',W_{a},q_{a})\rightarrow(\sigma z,\sigma z',\sigma W_{a},q_{a})$ giving a global M\"obius $\SL_{2}(\C)$ action on $\Cg$ as follows
\begin{align}
	\label{eq:Mobwrhoa}
	\sigma:(w_{a},\rho_{a})\mapsto & 
	\left(	\frac { \left( Aw_{a}+B \right)  \left( Cw_{-a}+D \right) -\rho_{a}
		\,AC}{ \left( Cw_{a}+D \right)  \left( Cw_{-a}+D \right) -\rho_{a}\,{
			C}^{2}},
	{\frac {\rho_{a}}{ \left(  \left( Cw_{a}+D \right)  \left( Cw_{-a}+D
			\right) -\rho_{a}\,{C}^{2} \right) ^{2}}}\right).
\end{align}
Furthermore, $\sigma z'=\sigma \gamma_{a} \sigma^{-1}(\sigma z)$ so that $\Gamma$ is mapped to the conjugate Schottky group $\sigma \Gamma \sigma^{-1}$. 
We define Schottky space as $\mathfrak{S}_{g}:=\Cg/\SL_{2}(\C)$ which provides a natural covering space for the moduli space $\Mg$ of genus $g$ Riemann surfaces (of complex dimension 1 for $g=1$ and $3g-3$ for $g\ge 2$).  
We exploit the $\Cg$  parametrisation throughout  because the sewing relation~\eqref{eq:SchottkySewing2} is more  readily implemented in the theory of vertex operators.

\subsection{Some classical differentials on a Riemann surface}
Let $\Sg$  be a marked compact genus $g$ Riemann surface with canonical homology basis $\alpha_{a}, \beta_{a}$ for $a\in \Ip$.
The meromorphic bidifferential form of the second kind is a unique symmetric form  
\cite{Mu1,Fa}
\begin{align}\label{eq:omega}
	\omega(x,y)=\left(\frac{1}{(x-y)^{2}}+\text{regular terms}\right)\, dxdy,
\end{align}%
for local coordinates $x,y$ where
$
\oint_{\alpha_{a}}\omega(x,\cdot )=0$ for all $a\in \Ip$. It follows that
\begin{align}
	\nu_{a}(x):=\oint\limits_{\beta_{a}}\omega(x,\cdot ),  \quad a\in \Ip,
	\label{eq:nu}
\end{align}%
is a holomorphic 1-form normalised by
$\oint\limits_{\alpha_{a}}\nu_{b}=\tpi\, \delta_{ab}$. $\{\nu_{a}(x)\}$ is a basis for the space of holomorphic 1-forms $\calH_{1}$. 
The $g \times g$ symmetric period matrix is given by $\Omega=\frac{1}{\tpi}\tau$ where
\begin{align}
	\tau_{ab}:=\oint\limits_{\beta_{a}}\nu_{b},\quad \quad a,b\in \Ip.
	\label{eq:period}
\end{align}%
The projective connection $s(x)$ is defined by
\begin{align}\label{eq:projcon}
	s(x):=6\lim_{y\rightarrow x}\left(\omega(x,y)-\frac{dxdy}{(x-y)^{2}}\right).
\end{align} 
The Bers quasiform of weight $(N,1-N)$ for $g\ge 2$ and $N\ge2$ is defined by the following Poincar\'e series \cite{Be1,Be2,McIT,TW2} 
\begin{align}
	\label{eq:PsiNdef}
	\Psi_{N}(x,y):&=
	\sum_{\gamma\in\Gamma} \Pi_{N}(\gamma x,y),\quad x,y\in \Omo,
	\\
	\label{eq:PipiN}
	\Pi_{N}(x,y):&=\Pi_{N}(x,y;\bm{A}):=\frac{1}{x-y}\prod_{\ell\in\calL_{N}}\frac{y-A_{\ell}}{x-A_{\ell}}dx^{N}dy^{1-N},
\end{align}
for index set $\calL_{N}$ of~\eqref{eq:ILconv} and
where $\bm{A}:=A_{0},\ldots,A_{2N-2}\in\Lambda(\Gamma)$ are distinct limit points of $\Gamma$.
$\Pi_{N}$ is globally M\"obius invariant with 
\begin{align}\label{eq:PiN_Mobius}
	\Pi_{N}(\sigma x, \sigma y;\bm{\sigma A})=\Pi_{N}(x,y;\bm{A}),
\end{align} 
for all $\sigma\in\SL_{2}(\C)$.
$\Psi_{N}(x,y)$ is meromorphic  for $x,y\in \Omo$ with simple poles  of residue one at $y=\gamma x$ for all $\gamma\in\Gamma$. It is an $N$-differential in $x$ since
\begin{align*}
	\Psi_{N}(\gamma x,y) = \Psi_{N}(x,y),\quad \gamma\in\Gamma,
\end{align*} 
by construction, and is a quasiperiodic $(1-N)$-form  in $y$ with  \cite{TW2}
\begin{align}\label{eq:PsiCocyc}
	\Psi_{N}( x,\gamma_{a} y) - \Psi_{N}(x,y)=-\sum_{\ell\in\calL_{N}}\Theta_{N,a}^{\ell}(x)
	(y-w_{a})^{\ell}dy^{1-N},
\end{align}
for each Schottky group generator $\gamma_{a}$ and where 
$	\{\Theta_{N,a}^{\ell}(x)\}_{a\in\Ip}^{\ell\in\calL_{N}}$
spans the vector space $\calH_{N}$ of holomorphic $N$-forms. We note that  $\dim\calH_{N}=(g-1)(2N-1)$ for $g,N\ge 2$ by the Riemann-Roch Theorem.

For $N=1$ and $g\ge 2$  we define $\Psi_{1}(x,y):=\sum_{\gamma\in\Gamma}\Pi_{1}(\gamma x,y)$ with 
\begin{align*}
	\Pi_{1}(x,y):=\left(\frac{1}{x-y}-\frac{1}{x-A_{0}}\right)dx,
\end{align*}
where here $A_{0}\in \Omega_{0}$. Then we find \cite{Bu}
\begin{align}
	\label{eq:omPoincare2}
	\omega(x,y)=\Psi_{1}^{(0,1)}(x,y)dy.
\end{align}
Hence $\Psi_{1}(x,y)=\omega_{y-A_{0}}(x)=\int_{A_{0}}^{y}\omega(x,\cdot)$, the classical differential of the third kind which is not a Bers quasiform since it has simple poles both at $x=y$ and $x=A_{0}$.
Lastly, for $N\ge 1$, we define a canonical symmetric meromorphic 
$(N,N)$-form $\omega_{N}(x,y)$ with a pole of order $2N$ at $x=y$, generalising \eqref{eq:omPoincare2} as follows:
\begin{align}
	\label{eq:omegaN}
	\omega_{N}(x,y):=
	\Psi_{N}^{(0,2N-1)}(x,y)dy^{2N-1}
	=\sum_{\gamma \in\Gamma} \dfrac{d(\gamma x)^{N}dy^{N}}{(\gamma x-y)^{2N}}.
\end{align}

\subsection{Differential operators on meromorphic forms}
We review some first order differential operators $\D^{p}$, $\D^{p,(\bm{m})}_{\bfy}$, $\nabla(x)$ and $\nabla_{\bm{y}}^{(\bm{m})}(x)$ introduced
in~\cite{TW2} and developed in a VOA context in~\cite{TW1}. 
We first define the following basis for the tangent space $T(\Cg)$:
\begin{align}
	\label{eq:delael}
	\del_{a}^{0}:=\del_{w_{a}},\quad
	\del_{a}^{1}:=\rho_{a} \del_{\rho_{a}},\quad
	\del_{a}^{2}:=\rho_{a} \del_{w_{-a}}, \mbox{  for } a\in\Ip.
\end{align}
The generator $\D^{p}$ of any global M\"obius transformation \eqref{eq:Mobwrhoa} is determined by a quadratic polynomial $p(z)$ where \cite{TW2,TW1}
\begin{align}
	\label{eq:DpMobius}
	\D^{p}:&=\sum_{a\in\I}p(W_{a})\del_{W_{a}}		
	=
	\sum_{a\in\I}\left(  
	p(w_{a})\del_{w_{a}}+p^{(1)}(w_{a})\rho_{a}\del_{\rho_{a}}
	+p^{(2)}(w_{a}) \rho_{a}\del_{w_{-a}}\right)
	\\
	\notag
	&=\sum_{a\in\Ip}\sum_{\ell\in\calL_{2}}p_{a}^{\ell}\del_{a}^{\,\ell},
\end{align}
 with\footnote{Note that $p_{a}^{\ell}$ is defined with opposite sign in \cite{TW1,TW2}.} 
 $
 p_{a}^{\ell} :=p^{(\ell)}(w_{a})+ \rho_{a}^{1-\ell}p^{(2-\ell)}(w_{-a})$.
We next define a differential operator mapping differentiable functions on $\Cg$  to $\calH_{2}$:
\begin{align}\label{eq:nabladef}
	\delx:=\sum_{a\in\Ip}\sum_{\ell\in\calL_{2}}\Theta_{2,a}^{\ell}(x)\del_{a}^{\,\ell},
\end{align}
for $ \calH_{2}$ spanning set $\{\Theta_{2,a}^{\ell}(x)\}$ associated with quasi-periodicity  $\Psi_{2}(x,y)$ in \eqref{eq:PsiCocyc}. 
$\Psi_{2}(x,y)$  depends on the choice of the 3 limit points $\bm{A}= A_{0},A_{1},A_{2}$  in \eqref{eq:PipiN}. Any pair of limit points $\bm{A}$ and $\bm{\Ahat}$ are related by a M\"obius map so that $\Psi_{2}(x,y)$ is unique up to global M\"obius maps using \eqref{eq:PiN_Mobius}.
This has important consequences for the operator $\nabla(x)$ and its generalisation below. Let $\Psihat_{2}(x,y)$ be the Bers quasiform for alternative limit points $\bm{\Ahat}$. Then
$\Psihat_{2}(x,y)-\Psi_{2}(x,y)$ is holomorphic with
\begin{align}\label{eq:Psihat}
	\Psihat_{2}(x,y)=\Psi_{2}(x,y) +\sum_{r=1}^{3g-3}\Phi_{r}(x)p_{r}(y)dy^{-1}, 
\end{align}
for some $\calH_{2}$ basis $\{ \Phi_{r}(x)\}$ and quadratic polynomials $p_{r}$. For the corresponding $\calH_{2}$ spanning set $\{ \widehat{\Theta}_{2,a}^{\ell}(x)\}$ from \eqref{eq:PsiCocyc} and with $(p_{r})_{a}^{\ell}$ of \eqref{eq:DpMobius}  we find 
\begin{align}\label{eq:Thetahat}
	\widehat{\Theta}_{2,a}^{\ell}(x)=\Theta_{2,a}^{\ell}(x)
	+\sum_{r=1}^{3g-3} (p_{r})_{a}^{\ell}\Phi_{r}(x),
\end{align}
and differential operator
\begin{align}\label{eq:nablahat}
	\nablahat(x)=\nabla(x)+\sum_{r=1}^{3g-3} \Phi_{r}(x)\D^{p_{r}},
\end{align}
for  M\"obius generators $\D^{p_{r}}$. 
Thus $\nabla(x)$ determines a unique tangent vector field $\delMg(x)$, independent of $\bm{A}$, on the Schottky tangent space
$T(\Schg)\cong T(\Mg)$. We therefore find:
\begin{lemma}\label{lem:nablaM}
Let $F(\bm{\eta})$ be a differentiable function on $\Mg$ for any local coordinates $\bm{\eta}:=\eta_{1},\ldots,\eta_{3g-3}$. Then
\begin{align}\label{eq:nablaG}
		\nabla(x)F(\bm{\eta})=	\nabla_{\Mg}(x)F(\bm{\eta}).
\end{align}		
\end{lemma}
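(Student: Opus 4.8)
The plan is to reduce the identity to the single fact that any function pulled back from moduli space is annihilated by all the Möbius generators, so that the $\bm{A}$-dependent part of $\nabla(x)$ drops out when it acts on $F$. First I would regard $F(\bm{\eta})$ as a function on the Schottky parameter space $\Cg$ by pulling it back along the projection $\Cg\to\Schg=\Cg/\SL_{2}(\C)\to\Mg$, the last map being the covering of moduli space. Since $F$ factors through $\Schg$, its pullback is constant along $\SL_{2}(\C)$-orbits; equivalently each Möbius generator annihilates it, $\D^{p}F=0$ for every quadratic polynomial $p$, by the expansion \eqref{eq:DpMobius}.

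Next I would apply the chain rule to the definition \eqref{eq:nabladef}, writing $\nabla(x)F=\sum_{a\in\Ip}\sum_{\ell\in\calL_{2}}\Theta_{2,a}^{\ell}(x)\,\del_{a}^{\ell}F$ and $\del_{a}^{\ell}F=\sum_{i=1}^{3g-3}(\del_{a}^{\ell}\eta_{i})\,\partial F/\partial\eta_{i}$, so that $\nabla(x)F=\sum_{i}(\nabla(x)\eta_{i})\,\partial F/\partial\eta_{i}$ with each coefficient $\nabla(x)\eta_{i}\in\calH_{2}$. Applying the same reasoning to each coordinate function $\eta_{i}$, which is itself a pullback from $\Mg$, equation \eqref{eq:nablahat} gives $\nablahat(x)\eta_{i}=\nabla(x)\eta_{i}+\sum_{r}\Phi_{r}(x)\D^{p_{r}}\eta_{i}=\nabla(x)\eta_{i}$, since $\D^{p_{r}}\eta_{i}=0$ by the previous paragraph. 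Hence the $\calH_{2}$-valued coefficients $\nabla(x)\eta_{i}$ are independent of the auxiliary limit points $\bm{A}$.

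By the construction recalled just before the lemma, this $\bm{A}$-independence is precisely what identifies $\nabla(x)$, acting on functions pulled back from $\Mg$, with the descended vector field $\nabla_{\Mg}(x)$ on $T(\Schg)\cong T(\Mg)$; in particular $\nabla_{\Mg}(x)\eta_{i}=\nabla(x)\eta_{i}$ by definition of the descent, because the vertical (Möbius) component of $\nabla(x)$ annihilates $\eta_{i}$. Since $\nabla_{\Mg}(x)$ is itself a derivation, the chain rule in the coordinates $\bm{\eta}$ gives $\nabla_{\Mg}(x)F=\sum_{i}(\nabla_{\Mg}(x)\eta_{i})\,\partial F/\partial\eta_{i}$, and comparing with the display of the second paragraph yields $\nabla(x)F=\nabla_{\Mg}(x)F$, as required.

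The step I expect to require the most care is the geometric bookkeeping underlying the third paragraph: verifying that the covering $\Schg\to\Mg$ is a local biholomorphism, so that $\bm{\eta}$ genuinely serves as local coordinates on $\Schg$ and the identification $T(\Schg)\cong T(\Mg)$ is canonical, and confirming that the splitting of $T(\Cg)$ into the Möbius directions spanned by the $\D^{p}$ and the complementary moduli directions is clean enough that the projection $d\pi(\nabla(x))$ is well defined and agrees with the $\bm{A}$-independent operator produced above. The analytic input, that $\D^{p}$ kills every moduli function, is immediate once $F$ is viewed as a pullback, so all that remains is the chain-rule accounting just described.
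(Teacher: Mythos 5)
Your proof is correct and follows essentially the same route as the paper: the paper's own argument is precisely that the change of limit points $\bm{A}\to\bm{\Ahat}$ shifts $\nabla(x)$ only by the M\"obius generator terms $\sum_{r}\Phi_{r}(x)\D^{p_{r}}$ of \eqref{eq:nablahat}, which annihilate any function pulled back from $\Mg$, so that $\nabla(x)$ descends to the well-defined vector field $\nabla_{\Mg}(x)$ on $T(\Schg)\cong T(\Mg)$. Your chain-rule bookkeeping simply makes explicit what the paper leaves implicit in the sentence preceding the lemma.
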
	
Let $\calM^{(\bfm)}$ denote the space of meromorphic forms  $H^{(\bfm)}(\bfy)$ in 
$n$ variables $\bfy:=y_{1},\ldots,y_{n}$ of weight $(\bfm)$ for $\bfm:=m_{1},\cdots ,m_{n}$ i.e.
$H^{(\bfm)}(\bfy)=h(\bfy)dy_{1}^{m_{1}} \ldots dy_{n}^{m_{n}}$ for some meromorphic function $h(\bfy)$. 
We define the following differential operator on $\calM^{(\bfm)}$:
\begin{align}
	\label{eq:nabla_xym}
	\nabmy{\bfm}{\bfy}(x):=
	\nabla(x)
	+\sum_{k=1}^{n}\Big(\Psi_{2}(x,y_{k})\,d_{y_{k}}+m_{k} d_{y_{k}}\left(\Psi_{2}(x,y_{k})\right)\Big),
\end{align}
where $d_{y}(f(y)):=\partial_{y}f(y)dy$. 
\begin{remark}\label{rem:delyx}
	We note that $\nabmy{\bfm}{\bfy}(x)$ satisfies a Leibniz product rule (with adjusted weights)   e.g. 
	for $G(y)\in \calM^{(m)}$ and $H(z)\in \calM^{(n)}$ we have
	\[
	\nabmy{m,n}{y,z}(x)(G(y)H(z))=
	\left(\nabmy{m}{y}(x)G(y)\right)H(z)
	+G(y)\nabmy{n}{z}(x)H(z). 
	\]
	The definition \eqref{eq:nabla_xym} is unambiguous if any $\bm{y}$ variables are equal e.g. for $G(y,z)\in \calM^{(m,n)}$ regular at $y=z$, then $G(y,y)\in\calM^{(m+n)}$ with 
	\[
	\nabmy{m,n}{y,y}(x)G(y,y)=
	\nabmy{m+n}{y}(x)G(y,y).
	\]
\end{remark}
Choosing alternative limit points $\bm{\Ahat}$ so that \eqref{eq:Psihat} holds,  then \eqref{eq:nablahat} generalises to
\begin{align}\label{eq:nablahat}
	\nablahat^{(\bfm)}_{\bfy}(x)=\nabmy{\bfm}{\bfy}(x)+\sum_{r=1}^{3g-3}\Phi_{r}(x)\D^{(\bm{m}),p_{r}}_{\bfy}.
\end{align}
where 
\begin{align}
\D^{p,(\bm{m})}_{\bfy}:=\D^{p}
+\sum_{k=1}^{n}\left(p(y_{k})\partial_{y_{k}}+m_{k}p^{(1)}(y_{k})\right),
\label{eq:Dpy}
\end{align}
for any quadratic polynomial $p(y)$.
We find \cite{TW2}
\begin{proposition}\label{prop:Delmproperties}
Let $H^{(\bfm)}(\bfy)\in \calM^{(\bfm)}$. Then
	\begin{enumerate}
		\item [(i)] $\D^{p,(\bm{m})}_{\bfy}$ generates a global M\"obius transformation with
		$\D^{p,(\bm{m})}_{\bfy}H^{(\bfm)}(\bfy)=0$;
		\item[(ii)]
		$\nabmy{\bfm}{\bfy}(x)H^{(\bfm)}(\bfy)\in \calM^{(2,\bfm)}$, 
		the space of meromorphic forms of weight $(2,\bfm)$;
		\item[(iii)]
		Commutativity: $\nabmy{2,\bfm}{x,\bfy}(z)\nabmy{\bfm}{\bfy}(x)H^{(\bfm)}(\bfy)
		=\nabmy{2,\bfm}{z,\bfy}(x)\nabmy{\bfm}{\bfy}(z)H^{(\bfm)}(\bfy)$. 
	\end{enumerate}	
\end{proposition}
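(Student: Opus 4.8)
The plan is to treat the three claims in turn, with (iii) being the substantive one.

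For (i), I would first recall from \eqref{eq:DpMobius} that $\D^{p}$ is precisely the infinitesimal generator of the global $\SL_{2}(\C)$ action \eqref{eq:Mobwrhoa} on the Schottky parameters $\Cg$ attached to the quadratic vector field $p(z)\del_{z}$. The remaining terms $\sum_{k}\left(p(y_{k})\del_{y_{k}}+m_{k}p^{(1)}(y_{k})\right)$ are recognised as the sum over insertion points of the Lie derivative of a weight-$m_{k}$ form along $p(y_{k})\del_{y_{k}}$, since for $f(y)\,dy^{m}$ one has $\mathcal{L}_{p\del_{y}}\bigl(f\,dy^{m}\bigr)=\bigl(p f'+m p' f\bigr)dy^{m}$, noting $p^{(1)}=p'$. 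Hence $\D^{p,(\bm{m})}_{\bfy}$ generates the simultaneous Möbius action that conjugates $\Gamma\mapsto\sigma\Gamma\sigma^{-1}$ while transporting the points $y_{k}\mapsto\sigma y_{k}$. Because $H^{(\bfm)}(\bfy)$ is an intrinsic meromorphic form on $\Sg$, and such a global Möbius map merely re-coordinatises the same surface, $H^{(\bfm)}$ is invariant under the combined action; differentiating this invariance at the identity yields $\D^{p,(\bm{m})}_{\bfy}H^{(\bfm)}(\bfy)=0$.

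For (ii), the weight-$2$ property in $x$ is immediate: every coefficient appearing in \eqref{eq:nabla_xym}, namely the $\Theta_{2,a}^{\ell}(x)\in\calH_{2}$ and the forms $\Psi_{2}(x,y_{k})$ and $d_{y_{k}}\Psi_{2}(x,y_{k})$, is $\Gamma$-automorphic of weight $2$ in $x$ by \eqref{eq:PsiNdef} and the construction of $\Theta_{2,a}^{\ell}$. The content is therefore the weight-$m_{k}$ automorphy in each $y_{k}$. I would compute the transform of $\nabmy{\bfm}{\bfy}(x)H^{(\bfm)}$ under $y_{k}\mapsto\gamma_{a}y_{k}$. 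The piece $\nabla(x)H^{(\bfm)}=\sum_{b,\ell}\Theta_{2,b}^{\ell}(x)\,\del_{b}^{\ell}H^{(\bfm)}$ fails to be automorphic because the generators $\gamma_{a}$ themselves depend on the Schottky parameters that $\del_{b}^{\ell}$ differentiates; applying $\del_{b}^{\ell}$ to the automorphy relation $H^{(\bfm)}(\ldots,\gamma_{a}y_{k},\ldots)=H^{(\bfm)}(\bfy)$ produces an anomaly proportional to the parameter-variations $\del_{b}^{\ell}(\gamma_{a}y_{k})$. Summed against $\Theta_{2,b}^{\ell}(x)$ this anomaly is exactly the quasiperiodicity defect of $\Psi_{2}$ recorded in \eqref{eq:PsiCocyc}, so that the Lie-derivative correction $\Psi_{2}(x,y_{k})d_{y_{k}}+m_{k}d_{y_{k}}\Psi_{2}(x,y_{k})$ — whose own failure of periodicity is governed by that same \eqref{eq:PsiCocyc} — cancels it and restores weight-$m_{k}$ automorphy. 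Meromorphy is clear since all ingredients are meromorphic, and one could alternatively reduce to a generating set of forms via the Leibniz rule of Remark~\ref{rem:delyx}.

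For (iii), I would invoke (ii) so that the inner operator produces an element of $\calM^{(2,\bfm)}$, on which the outer operator $\nabmy{2,\bfm}{x,\bfy}(z)$ acts with $x$ now treated as a weight-$2$ variable, and then expand both $\nabmy{2,\bfm}{x,\bfy}(z)\nabmy{\bfm}{\bfy}(x)H^{(\bfm)}$ and its $x\leftrightarrow z$ counterpart using \eqref{eq:nabla_xym}. The pure second-order Schottky-derivative part is symmetric because $\del_{a}^{\ell}\del_{b}^{\ell'}=\del_{b}^{\ell'}\del_{a}^{\ell}$; however $\nabla(z)$ also differentiates the parameter-dependent coefficients $\Theta_{2,a}^{\ell}(x)$ and $\Psi_{2}(x,y_{k})$ of $\nabla(x)$, and conversely, so the equality reduces to an integrability (flatness) statement: the symmetrised mixed variation of the defining data must be unchanged under $x\leftrightarrow z$. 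Concretely I would need the variational identity that $\nabla(z)\Theta_{2,a}^{\ell}(x)$ and $\nabla(z)\Psi_{2}(x,y)$ combine with the $\Psi_{2}$-Lie-derivative cross terms into an expression symmetric in $x$ and $z$. Establishing this symmetry — equivalently the vanishing of the curvature of $\nabla(x)$ viewed as a connection on forms over Schottky space — is the main obstacle; I expect to obtain it from the explicit Poincaré-series form of $\Psi_{2}$ in \eqref{eq:PsiNdef} together with the cocycle relation \eqref{eq:PsiCocyc} used in (ii), the commutativity of the second Schottky derivatives then forcing the termwise cancellation.
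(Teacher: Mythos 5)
A preliminary remark: this paper gives no proof of Proposition~\ref{prop:Delmproperties} at all --- it is quoted from \cite{TW2} (``We find \cite{TW2}''), so your attempt can only be measured against the argument given there. With that understood, your parts (i) and (ii) are correct in outline and follow the same mechanism as \cite{TW2}: (i) is obtained by differentiating, at the identity of $\SL_{2}(\C)$, the equivariance of $H^{(\bfm)}(\bfy)$ under the simultaneous M\"obius action \eqref{eq:Mobwrhoa} on the Schottky parameters and the insertion points; and (ii) rests on exactly the cancellation you describe, between the anomaly produced when $\del_{b}^{\ell}$ hits the parameter dependence of the automorphy relation of $H^{(\bfm)}$ --- governed by the identity recorded later in this paper as \eqref{eq:nablayt}, $d(\gamma_{a}z)^{-1}\nabla(x)(\gamma_{a}z)=\Psi_{2}(x,\gamma_{a}z)-\Psi_{2}(x,z)$ --- and the quasiperiodicity defect \eqref{eq:PsiCocyc} of the $\Psi_{2}$ terms in \eqref{eq:nabla_xym}.

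Part (iii), however, contains a genuine gap, and it is the substantive claim of the proposition. You correctly note that the pure second-order Schottky derivatives commute, and you correctly reduce the statement to the $x\leftrightarrow z$ symmetry of the cross terms in which the outer operator differentiates the coefficients of the inner one; concretely, to identities of the shape $\nabmy{2,-1}{x,y}(z)\Psi_{2}(x,y)=\nabmy{2,-1}{z,y}(x)\Psi_{2}(z,y)$ together with a companion statement for the combination $\nabmy{2}{x}(z)\Theta_{2,a}^{\ell}(x)$ appearing when $\nabla(z)$ and the $\Psi_{2}(z,x)$ terms act on the coefficients of $\nabla(x)$. But at precisely this point you write that you ``expect'' to obtain these from the Poincar\'e series and the cocycle relation, and no derivation is given. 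These flatness identities are not routine bookkeeping: they are nontrivial variational statements about the Bers quasiform, of the same nature as the formulas \eqref{eq:nab_nu}--\eqref{eq:nab_som} quoted in Section~2, and establishing them (by manipulation of the Poincar\'e series \eqref{eq:PsiNdef}, keeping careful track of the non-automorphic tails and the cocycle \eqref{eq:PsiCocyc}) is where essentially all of the work in the commutativity proof of \cite{TW2} lies. As it stands, your argument for (iii) restates what must be proven rather than proving it.
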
 
Eqns~\eqref{eq:nablahat}, \eqref{eq:Dpy} and  Proposition~\ref{prop:Delmproperties}~(i),(ii) imply the following generalisation of Lemma~\ref{lem:nablaM}
\begin{lemma}\label{lem:nablayM}
Let $H^{(\bfm)}(\bfy)\in \calM^{(\bfm)}$. Then 
\begin{align*}
\nabmy{\bfm}{\bfy}(x)H^{(\bfm)}(\bfy)=\nabmy{\bfm}{\Mg,\bfy}(x)H^{(\bfm)}(\bfy),
\end{align*}
for
\begin{align}\label{eq:nablayMg}
	\nabmy{\bfm}{\Mg,\bfy}(x):=
	\nabla_{\Mg}(x)
	+\sum_{k=1}^{n}\Big( \PsiM(x,y_{k})d_{y_{k}}
	+m_{k} d_{y_{k}}\left(\PsiM(x,y_{k})\right)\Big),
\end{align} 
where $\PsiM(x,y)$ is an $\bm{A}$ independent weight $(2,-1)$ quasiform .
\end{lemma}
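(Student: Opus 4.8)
The plan is to prove this as the form-valued analogue of Lemma~\ref{lem:nablaM}: the essential content is that the action of $\nabmy{\bfm}{\bfy}(x)$ on a genuine meromorphic form $H^{(\bfm)}(\bfy)$ is independent of the auxiliary limit points $\bm{A}$ used to build $\Psi_{2}(x,y)$, so that it descends to an operator assembled from the $\bm{A}$-independent data $\nabla_{\Mg}(x)$ and $\PsiM(x,y)$ on $T(\Schg)\cong T(\Mg)$. Everything then reduces to tracking the $\bm{A}$-dependence and showing it lies in directions annihilated by $H^{(\bfm)}(\bfy)$.

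First I would record the behaviour of $\nabmy{\bfm}{\bfy}(x)$ under a change of limit points $\bm{A}\to\bm{\Ahat}$. Substituting \eqref{eq:Psihat} into \eqref{eq:nabla_xym} together with $\nablahat(x)=\nabla(x)+\sum_{r=1}^{3g-3}\Phi_{r}(x)\D^{p_{r}}$, the $\nabla(x)$-piece contributes $\sum_{r}\Phi_{r}(x)\D^{p_{r}}$, the $\Psi_{2}(x,y_{k})\,d_{y_{k}}$-pieces contribute $\sum_{r}\Phi_{r}(x)\sum_{k}p_{r}(y_{k})\partial_{y_{k}}$ (using $dy_{k}^{-1}d_{y_{k}}=\partial_{y_{k}}$ on the relevant forms), and the $m_{k}d_{y_{k}}(\Psi_{2}(x,y_{k}))$-pieces contribute $\sum_{r}\Phi_{r}(x)\sum_{k}m_{k}p_{r}^{(1)}(y_{k})$. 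By \eqref{eq:Dpy} these reassemble into $\sum_{r}\Phi_{r}(x)\D^{p_{r},(\bm{m})}_{\bfy}$, which is exactly the transformation formula \eqref{eq:nablahat}; this is routine weight bookkeeping. I would then invoke Proposition~\ref{prop:Delmproperties}(i): since $H^{(\bfm)}(\bfy)\in\calM^{(\bfm)}$ is a genuine form, each M\"obius generator annihilates it, $\D^{p_{r},(\bm{m})}_{\bfy}H^{(\bfm)}(\bfy)=0$, so the correction terms drop and $\nablahat^{(\bfm)}_{\bfy}(x)H^{(\bfm)}(\bfy)=\nabmy{\bfm}{\bfy}(x)H^{(\bfm)}(\bfy)$. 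This is the desired $\bm{A}$-independence, and Proposition~\ref{prop:Delmproperties}(ii) places the common value in $\calM^{(2,\bfm)}$.

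Finally I would repackage this $\bm{A}$-independent operator using canonical moduli data. The $\bfy$-independent part of \eqref{eq:nabla_xym} is $\nabla(x)$, whose moduli projection is the $\bm{A}$-independent field $\nabla_{\Mg}(x)$ by Lemma~\ref{lem:nablaM}; the $\bfy$-dependent part is carried by the $\Psi_{2}(x,y_{k})$ insertions, whose $\bm{A}$-independent representative modulo the M\"obius span $\{\Phi_{r}(x)p_{r}(y)dy^{-1}\}$ defines the weight $(2,-1)$ quasiform $\PsiM(x,y)$. Since a tangent vector and a $y$-dependent quasiform are objects of different type, this splitting is unambiguous, and reinserting it into \eqref{eq:nabla_xym} gives precisely $\nabmy{\bfm}{\Mg,\bfy}(x)$ of \eqref{eq:nablayMg}; by construction it differs from $\nabmy{\bfm}{\bfy}(x)$ only by a combination of the operators $\D^{p_{r},(\bm{m})}_{\bfy}$, which vanish on $H^{(\bfm)}(\bfy)$ by part (i). The main obstacle is exactly this last step: one must verify that the entire $\bm{A}$-dependence of $\Psi_{2}$ sits in the M\"obius directions $\Phi_{r}(x)p_{r}(y)dy^{-1}$, so that $\PsiM(x,y)$ is genuinely well defined as an $\bm{A}$-independent quasiform (not merely that the composite operator acts in an $\bm{A}$-independent way), and that projecting out these directions is compatible with the projection $T(\Cg)\to T(\Schg)$ underlying $\nabla_{\Mg}(x)$.
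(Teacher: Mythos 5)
Your proposal is correct and takes essentially the same route as the paper: the paper proves Lemma~\ref{lem:nablayM} exactly by combining the change-of-limit-points formula \eqref{eq:nablahat} with \eqref{eq:Dpy} and Proposition~\ref{prop:Delmproperties}~(i),(ii), so that the action of $\nabmy{\bfm}{\bfy}(x)$ on $H^{(\bfm)}(\bfy)\in\calM^{(\bfm)}$ is independent of $\bm{A}$ and hence expressible through the $\bm{A}$-independent data $\nabla_{\Mg}(x)$ and $\PsiM(x,y)$. The ``main obstacle'' you flag --- that the entire $\bm{A}$-dependence of $\Psi_{2}$ lies in the M\"obius directions $\Phi_{r}(x)p_{r}(y)dy^{-1}$ --- is precisely the content of \eqref{eq:Psihat}, which the paper supplies, so your argument requires nothing beyond what you already cite.
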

We apply Lemma~\ref{lem:nablayM} to some differential equations\footnote{Eqns \eqref{eq:nab_nu} and \eqref{eq:nab_ombidiff} appear in \cite{O} for a postulated weight-$(2,-1)$ quasiform $\PsiM$.} discussed in  \cite{TW1,TW2,O} where
$\nabla_{\bm{y}}^{(\bm{m})}(x)$ acts on the period matrix, 1-forms, bidifferential and projective connection of \eqref{eq:omega}--\eqref{eq:projcon}. Applying Lemma~\ref{lem:nablayM} these may be written as follows:
\begin{align}
	&\nabla_{\Mg} (x)\tau_{ab}=\nu_{a}(x)\nu_{b}(x),
	\label{eq:nab_Om}
	\\[2pt]
	&\nabla_{\Mg,y}^{(1)}(x)\,\nu_{a}(y)=\omega(x,y)\nu_a(x),
	\label{eq:nab_nu}
	\\[2pt]
	&\nabla^{(1,1)}_{\Mg,y_{1},y_{2}}(x)\,\omega(y_{1},y_{2})
	=\omega(x,y_{1})\omega(x,y_{2}),\label{eq:nab_ombidiff}
	\\[2pt]
	&\frac{1}{6}\nabla_{\Mg,y}^{(2)}(x)s(y)=
	\omega(x,y)^{2}-\omega_{2}(x,y),
	\label{eq:nab_som}
\end{align}
where $\omega_{2}(x,y)$ is the weight $(2,2)$ meromorphic form of \eqref{eq:omegaN}. Eqn.~\eqref{eq:nab_Om} is equivalent to Rauch's formula \cite{R}. Eqn.~\eqref{eq:nab_nu} implies an explicit formula for $\PsiM(x,y)$:
\begin{lemma} 	\label{lem:PsiM}
For all $a,b\in\Ip$ with $a\neq b$ we have
\begin{align}\label{eq:PsiM}
\PsiM(x,y)=\frac{
\omega(x,y)
\begin{vmatrix}
\nu_{a}(y) & \nu_{a}(x) \\
\nu_{b}(y) & \nu_{b}(x) 
\end{vmatrix}
-\begin{vmatrix}
	\nu_{a}(y) & \nabla_{\Mg} (x)\nu_{a}(y) \\
	\nu_{b}(y) & \nabla_{\Mg} (x)\nu_{b}(y) 
\end{vmatrix}
}
{\begin{vmatrix}
		\nu_{a}(y) & d_{y}\nu_{a}(y) \\
		\nu_{b}(y) & d_{y}\nu_{b}(y) 
\end{vmatrix}}.
\end{align}
\end{lemma}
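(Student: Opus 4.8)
The plan is to read off $\PsiM$ directly from the differential equation \eqref{eq:nab_nu}, which by Lemma~\ref{lem:nablayM} holds simultaneously for every index $a\in\Ip$. First I would expand the left-hand side of \eqref{eq:nab_nu} using the definition \eqref{eq:nablayMg} with $n=1$ and weight $m_{1}=1$, giving
\begin{align*}
\nabla_{\Mg}(x)\nu_{a}(y)+\PsiM(x,y)\,d_{y}\nu_{a}(y)+d_{y}\!\left(\PsiM(x,y)\right)\nu_{a}(y)=\omega(x,y)\nu_{a}(x).
\end{align*}
Since $\PsiM$ has weight $-1$ and $\nu_{a}$ weight $1$ in $y$, the product $\PsiM(x,y)\nu_{a}(y)$ has weight $(2,0)$ in $(x,y)$, and the two middle terms collapse under the ordinary product rule for $d_{y}$ into $d_{y}\!\left(\PsiM(x,y)\nu_{a}(y)\right)$. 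Writing $\nu_{a}(y)=n_{a}(y)\,dy$, $\omega(x,y)=W(x,y)\,dx\,dy$, $\PsiM(x,y)=P(x,y)\,dx^{2}dy^{-1}$ and $\nabla_{\Mg}(x)\nu_{a}(y)=M_{a}(x,y)\,dx^{2}dy$, and extracting the coefficient of the common form $dx^{2}dy$, equation \eqref{eq:nab_nu} becomes the scalar relation
\begin{align*}
M_{a}(x,y)+\partial_{y}\!\left(P(x,y)\,n_{a}(y)\right)=W(x,y)\,n_{a}(x),\qquad a\in\Ip .
\end{align*}

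Next I would eliminate the unknown derivative $\partial_{y}P$ by choosing two distinct indices $a\neq b$ and forming the combination $n_{b}(y)\cdot(\text{relation}_{a})-n_{a}(y)\cdot(\text{relation}_{b})$. Expanding $\partial_{y}(Pn_{a})=(\partial_{y}P)n_{a}+P\,\partial_{y}n_{a}$, the $\partial_{y}P$ contributions cancel because their joint coefficient is $n_{b}n_{a}-n_{a}n_{b}=0$, leaving the purely algebraic identity
\begin{align*}
n_{b}M_{a}-n_{a}M_{b}+P\left(n_{b}\partial_{y}n_{a}-n_{a}\partial_{y}n_{b}\right)=W\left(n_{b}\,n_{a}(x)-n_{a}\,n_{b}(x)\right).
\end{align*}
Solving for $P$ and reassembling the differentials recovers exactly the three $2\times 2$ determinants of \eqref{eq:PsiM}: the antisymmetrised $W\,n_{\bullet}(x)$ terms give the numerator determinant built from $\omega(x,y)$ and the $\nu_{\bullet}(x)$ column, the antisymmetrised $M_{\bullet}$ terms give the second numerator determinant involving $\nabla_{\Mg}(x)\nu_{\bullet}(y)$, and the Wronskian $n_{b}\partial_{y}n_{a}-n_{a}\partial_{y}n_{b}$ gives the denominator determinant.

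The division is legitimate because the denominator, the Wronskian of $\nu_{a}$ and $\nu_{b}$, is not identically zero: were it so, then $\nu_{a}/\nu_{b}$ would be constant, contradicting the linear independence of the normalised holomorphic $1$-forms for $a\neq b$. Finally, independence of the right-hand side of \eqref{eq:PsiM} from the chosen pair is automatic, since Lemma~\ref{lem:nablayM} already guarantees a unique $\bm{A}$-independent weight $(2,-1)$ quasiform $\PsiM$ satisfying \eqref{eq:nab_nu}; every admissible pair $a\neq b$ therefore computes the same object.

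I expect the only real obstacle to be bookkeeping: tracking the $y$-weight gradings through the operator \eqref{eq:nablayMg} so that the Leibniz collapse $\PsiM\,d_{y}\nu_{a}+d_{y}(\PsiM)\nu_{a}=d_{y}(\PsiM\nu_{a})$ is applied correctly, and then matching the antisymmetrised scalars with the signed $2\times 2$ determinants of \eqref{eq:PsiM}. No genuinely analytic input is required beyond the already-established differential equation \eqref{eq:nab_nu} and the existence and uniqueness statement of Lemma~\ref{lem:nablayM}.
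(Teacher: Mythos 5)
Your proposal is correct and is essentially the paper's own argument: the paper writes the relation \eqref{eq:nab_nu} for the two indices $a,b$ as a $2\times 2$ determinant identity in which the $d_{y}\left(\PsiM(x,y)\right)\nu_{\bullet}(y)$ contribution drops out (being a multiple of the first column) and then solves for $\PsiM$, which is exactly your component-wise antisymmetrisation $n_{b}\cdot(\mathrm{relation}_{a})-n_{a}\cdot(\mathrm{relation}_{b})$ killing the $\partial_{y}P$ term and dividing by the Wronskian. Your added remarks on the non-vanishing of the Wronskian and on independence of the chosen pair are correct points that the paper leaves implicit.
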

\begin{proof}
From \eqref{eq:nab_nu} we find
$\begin{vmatrix}
	\nu_{a}(y) & \nabla_{\Mg,y}^{(1)} (x)\nu_{a}(y) \\
	\nu_{b}(y) & \nabla_{\Mg,y}^{(1)} (x)\nu_{b}(y) 
\end{vmatrix}
=
\omega(x,y)
\begin{vmatrix}
	\nu_{a}(y) & \nu_{a}(x) \\
	\nu_{b}(y) & \nu_{b}(x) 
\end{vmatrix}$ from which we solve for $\PsiM(x,y)$.
\end{proof}

\subsection{Bers Potentials and Variations of Moduli}	
Let $\lambda(z,\zbar) dz d\zbar$ be the Poincar\'e metric and $\langle \, ,\, \rangle$ be the associated non-singular positive definite Petersson product on $\calH_{2}$. 
A Bers potential function $f(z,\zbar)$ for $\Phi(z)=\phi(z)dz^{2}\in\calH_{2}$ is a differentiable function on $\Omo$ such that \cite{Be1,TW2} 
\begin{align}
	\label{eq:dF}
	&\frac{1}{\pi}\partial_{\zbar} f=\overline{\phi (z)}\,\lambda(z,\zbar)^{-1},
\end{align} 
where $\lim_{z\rightarrow 0}\left\vert z ^{2}f\left(z^{-1}\right)\right\vert <\infty$.
A potential	  $f(z)$ can be constructed from a Bers quasiform $\Psi_{2}(x,z)$ as follows 
\begin{align}\label{eq:fpotential}
	f(z)	=-\langle \Psi_{2}(\cdot,z), \Phi(\cdot) \rangle dz.
\end{align}
The  potential $f$ for $\Phi$ is unique up to an additive quadratic polynomial in $z$. This follows from global M\"obius invariance of \eqref{eq:SchottkySewing} and \eqref{eq:PiN_Mobius}.

The Ahlfors map \cite{A} describes a bijective antilinear map between $\calH_{2}$  and the moduli tangent space $T(\Mg)$. This map can be realized in the Schottky parameterization as follows \cite{TW2}. 
Choose local coordinates $\bm{\eta}:=\eta_{1},\ldots,\eta_{3g-3}$ on $\Mg$ giving a local $T(\Mg)$ basis $\{\partial_{\eta_{r}}\}$.
For a given choice for $r$, consider a small moduli deformation $\eta_{r}\rightarrow \eta_{r}+\varepsilon$ of the Riemann surface with corresponding  quasiconformal map e.g. \cite{GL}
\begin{align*}
	z\rightarrow w_{r}=z+\frac{\varepsilon}{\pi} f_{r}+O(\varepsilon^2),
\end{align*}
for some $f_{r}(z,\zbar)$. 
The Beltrami equation  \cite{GL}  implies that
\[
\mu_{r}:=\frac{1}{\pi}\partial_{\zbar}f_{r} ,
\]
gives a harmonic Beltrami differential i.e. $f_{r}$ is
a Bers potential for  $\Phi_{r}:=\overline{\mu}_{r}\lambda dz^{2}\in\calH_{2}$ for each $r$. Since the moduli are independent,
$\{\Phi_{r}\}_{r=1}^{3g-3}$ is a $\calH_{2}$-basis.

The deformed Riemann surface is uniformized by some  Schottky group $\Gamma^{\varepsilon}$  where for each Schottky generator  $\gamma_{a}\in\Gamma$ for $a\in\Ip$ we define a generator $\gamma^{\varepsilon}_{a}\in\Gamma^{\varepsilon}$ via the compatibility condition $
\gamma_{a}^{\varepsilon}w_{r}(z)=w_{r}(\gamma_{a} z)$. This implies for each $\gamma\in\Gamma$ that\footnote{Each $\gamma_{a}\in\Gamma$ depends on $3g$ Schottky parameters which we can locally describe as functions of the moduli $\{\eta_{s}\}$ and 3 global M\"obius parameters.} \cite{TW2}
\begin{align}\label{eq:delmgamz}
	d(\gamma z)^{-1}\partial_{\eta_{r}}(\gamma_{a} z)=\frac{1}{\pi}\Xi_{r}[\gamma_{a}](z),\quad a\in\Ip,
\end{align}
where $\Xi_{r}[\gamma_{a}](z)=f_{r}(\gamma_{a} z)d(\gamma_{a} z)^{-1}-f_{r}(z)dz^{-1}$ is an Eichler cocycle that uniquely determines a potential for $\Phi_{r}\in\calH_{2}$ via the bijective antilinear Bers map \cite{Be1,Be2,TW2}.
Eqn.~\eqref{eq:delmgamz} allows us to identify a canonical bijective antilinear  Ahlfors map between $\partial_{\eta_{r}}$ and $\Phi_{r}$ for each $r=1,\ldots, 3g-3$ in the Schottky scheme. 
Let $\{\Phi_{r}^{\vee}\}$ be the dual basis to $\{\Phi_{r}\}$ with respect to  the Petersson product. Then 
$\sum_{r=1}^{3g-3}\Phi^{\vee}_{r}(x)\partial_{\eta_{r}}\in T(\Mg)$ is independent of the choice of local coordinates. Recalling $\nabla_{\Mg}(x)$ 
of \eqref{eq:nablaG} we find
\begin{proposition}\label{prop:nablaM}
$\nabla_{\Mg}(x)=-\pi \sum_{r=1}^{3g-3}\Phi^{\vee}_{r}(x)\partial_{\eta_{r}}$.
\end{proposition}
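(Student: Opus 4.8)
The plan is to prove the stronger operator identity by letting both sides act on an arbitrary differentiable function $F$ on $\Mg$ and expanding the resulting holomorphic quadratic differential in the Petersson dual basis. By Lemma~\ref{lem:nablaM} we have $\nabla_{\Mg}(x)F=\nabla(x)F\in\calH_{2}$, and since the Petersson product is non-degenerate with $\langle\Phi_{r}^{\vee},\Phi_{s}\rangle=\delta_{rs}$, any element of $\calH_{2}$ admits the expansion $\nabla(x)F=\sum_{r=1}^{3g-3}\langle\nabla(x)F,\Phi_{r}\rangle\,\Phi_{r}^{\vee}(x)$. It therefore suffices to establish the single pairing formula $\langle\nabla(x)F,\Phi_{r}\rangle=-\pi\,\partial_{\eta_{r}}F$ for every such $F$; feeding this back into the expansion gives $\nabla(x)F=-\pi\sum_{r}\Phi_{r}^{\vee}(x)\,\partial_{\eta_{r}}F$, which is the asserted identity of operators.

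Because $\nabla(x)=\sum_{a\in\Ip}\sum_{\ell\in\calL_{2}}\Theta_{2,a}^{\ell}(x)\,\partial_{a}^{\ell}$ by \eqref{eq:nabladef} and the scalars $\partial_{a}^{\ell}F$ do not depend on $x$, the pairing reduces to computing $\langle\Theta_{2,a}^{\ell},\Phi_{r}\rangle$. First I would pair the $N=2$ quasiperiodicity relation \eqref{eq:PsiCocyc} against $\Phi_{r}$ in the first variable and invoke the potential formula \eqref{eq:fpotential}, rewritten as $\langle\Psi_{2}(\cdot,z),\Phi_{r}\rangle=-f_{r}(z)\,dz^{-1}$ for the Bers potential $f_{r}$ of $\Phi_{r}$. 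Evaluating the left-hand side as a $(-1)$-form at $z=\gamma_{a}y$ and collecting terms reproduces the Eichler cocycle of $f_{r}$, giving
\[
\Xi_{r}[\gamma_{a}](z)=\sum_{\ell\in\calL_{2}}\langle\Theta_{2,a}^{\ell},\Phi_{r}\rangle\,(z-w_{a})^{\ell}\,dz^{-1},
\]
where $\Xi_{r}[\gamma_{a}](z)=f_{r}(\gamma_{a}z)\,d(\gamma_{a}z)^{-1}-f_{r}(z)\,dz^{-1}$ as in \eqref{eq:delmgamz}.

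Next I would compute the same cocycle directly from the Ahlfors--Bers deformation. Using $\gamma_{a}z=w_{-a}+\rho_{a}(z-w_{a})^{-1}$ and $d(\gamma_{a}z)^{-1}=-\rho_{a}^{-1}(z-w_{a})^{2}\,dz^{-1}$, differentiating $\gamma_{a}z$ with respect to $\eta_{r}$ yields
\[
d(\gamma_{a}z)^{-1}\partial_{\eta_{r}}(\gamma_{a}z)=-\Big(\kappa_{r,a}^{2}(z-w_{a})^{2}+\kappa_{r,a}^{1}(z-w_{a})+\kappa_{r,a}^{0}\Big)\,dz^{-1},
\]
with $\kappa_{r,a}^{0}=\partial_{\eta_{r}}w_{a}$, $\kappa_{r,a}^{1}=\rho_{a}^{-1}\partial_{\eta_{r}}\rho_{a}$ and $\kappa_{r,a}^{2}=\rho_{a}^{-1}\partial_{\eta_{r}}w_{-a}$, which are exactly the $\partial_{a}^{\ell}$-components of the lift of $\partial_{\eta_{r}}$ to $T(\Cg)$ selected by the compatibility condition of \eqref{eq:delmgamz}. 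Comparing this expression with \eqref{eq:delmgamz} and the cocycle expansion above and matching powers of $(z-w_{a})$ gives $\langle\Theta_{2,a}^{\ell},\Phi_{r}\rangle=-\pi\,\kappa_{r,a}^{\ell}$ for all $a\in\Ip$ and $\ell\in\calL_{2}$.

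Finally I would assemble the pieces. Substituting the last identity,
\[
\langle\nabla(x)F,\Phi_{r}\rangle=\sum_{a\in\Ip}\sum_{\ell\in\calL_{2}}(\partial_{a}^{\ell}F)\,\langle\Theta_{2,a}^{\ell},\Phi_{r}\rangle=-\pi\sum_{a,\ell}\kappa_{r,a}^{\ell}\,\partial_{a}^{\ell}F=-\pi\,\partial_{\eta_{r}}F,
\]
where the last equality holds because $\sum_{a,\ell}\kappa_{r,a}^{\ell}\partial_{a}^{\ell}$ is a lift of $\partial_{\eta_{r}}$ and any two such lifts differ by a global M\"obius generator $\D^{p}$, which annihilates the $\SL_{2}(\C)$-invariant function $F$ on $\Mg$. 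Inserting this into the dual-basis expansion of the first paragraph completes the proof. The main obstacle is the clean derivation of the cocycle identity for $\langle\Theta_{2,a}^{\ell},\Phi_{r}\rangle$: one must justify that the Petersson pairing of the weight-$(2,-1)$ quasiform $\Psi_{2}(\cdot,z)$ against $\Phi_{r}$ over a fundamental domain is well defined (using the automorphy of $\Psi_{2}$ in its first argument), correctly transport the potential $(-1)$-form through the evaluation at $\gamma_{a}y$, and track the factor $\pi$ arising from the normalisation $z\mapsto z+\frac{\varepsilon}{\pi}f_{r}$ of the quasiconformal deformation, together with the antilinearity conventions of the Petersson product and of the Ahlfors map.
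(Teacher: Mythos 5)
Your route is essentially the paper's own: both proofs pair the quasiperiodicity relation \eqref{eq:PsiCocyc} against $\Phi_{r}$ in the Petersson product, convert the result into an Eichler cocycle via the potential formula \eqref{eq:fpotential}, compare with the deformation cocycle of \eqref{eq:delmgamz}, and finish using M\"obius invariance; your dual-basis expansion of $\nabla(x)F$ is just a repackaging of the paper's step $\nabla(x)F=\sum_{s}\Theta_{s}(x)\partial_{\eta_{s}}F$ together with $\langle\Theta_{s},\Phi_{r}\rangle=-\pi\delta_{rs}$. There is, however, one genuine gap in your cocycle matching. The potential $\widehat{f}_{r}$ produced by \eqref{eq:fpotential} need not coincide with the potential $f_{r}$ arising from the quasiconformal deformation in \eqref{eq:delmgamz}: a Bers potential for $\Phi_{r}$ is unique only up to an additive quadratic polynomial. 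Pairing \eqref{eq:PsiCocyc} with $\Phi_{r}$ therefore produces the cocycle $\widehat{\Xi}_{r}[\gamma_{a}]$ of $\widehat{f}_{r}$, not the cocycle $\Xi_{r}[\gamma_{a}]$ of $f_{r}$. Writing $f_{r}=\widehat{f}_{r}+p$ for some quadratic $p$, the two cocycles differ by $p(\gamma_{a}z)d(\gamma_{a}z)^{-1}-p(z)dz^{-1}=-\sum_{\ell\in\calL_{2}}p_{a}^{\ell}(z-w_{a})^{\ell}dz^{-1}$, i.e.\ by the coefficients of the M\"obius generator $\D^{p}$ of \eqref{eq:DpMobius}. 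Matching powers of $(z-w_{a})$ thus only yields $\langle\Theta_{2,a}^{\ell},\Phi_{r}\rangle=-\pi\kappa_{r,a}^{\ell}+p_{a}^{\ell}$; your claimed exact identity $\langle\Theta_{2,a}^{\ell},\Phi_{r}\rangle=-\pi\kappa_{r,a}^{\ell}$ is unjustified and false in general (it holds only if $f_{r}=\widehat{f}_{r}$ exactly).

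The gap is repairable with precisely the tool you already invoke at the end: the error term contributes $\sum_{a,\ell}p_{a}^{\ell}\del_{a}^{\ell}F=\D^{p}F$ to $\langle\nabla(x)F,\Phi_{r}\rangle$, and this vanishes because $F$ descends to $\Mg$ and so is annihilated by every global M\"obius generator. This is exactly how the paper argues: it never asserts a coefficientwise identity, only the operator identity $\partial_{r}+\pi\partial_{\eta_{r}}=\D^{p}$ on $T(\Cg)$, which collapses to $\partial_{r}F=-\pi\partial_{\eta_{r}}F$ after acting on functions on moduli space. Note that your closing remark about two lifts of $\partial_{\eta_{r}}$ differing by $\D^{p}$ addresses a different (and unproblematic) ambiguity, namely the choice of section $\Cg\to\Schg$, not the potential ambiguity above --- which is also missing from your list of technical obstacles. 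Once you insert the $p_{a}^{\ell}$ term and kill it with $\D^{p}F=0$, your argument becomes the paper's proof.
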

\begin{proof}
From \eqref{eq:PsiCocyc}, \eqref{eq:delael} and \eqref{eq:nabladef} 
we find $\del_{b}^{\ell}(\gamma_{a}z)=-\delta_{ab}(z-w_{a})^{\ell}\partial_{z}(\gamma_{a}z)$ so that \cite{TW2}
	\begin{align}
		\label{eq:nablayt}
		d(\gamma_{a} z)^{-1}\delx(\gamma_{a} z)
		=\Psi_{2}(x,\gamma_{a} z)-\Psi_{2}(x,z),\quad a\in\Ip. 
	\end{align}
For $\Phi_{r}$ paired with $\partial_{\eta_{r}}$ under the canonical Ahlfors map we define
	\begin{align}\label{eq:delrtan}
		\partial_{r}:=\langle\nabla(\cdot), \Phi_{r}(\cdot)\rangle
		=\sum_{b\in\Ip}\sum_{\ell\in\calL_{2}}\langle\Theta_{2,b}^{\ell},\Phi_{r}\rangle\del_{b}^{\,\ell}\in T(\Cg).
	\end{align}
Then \eqref{eq:nablayt} implies 
	\begin{align}
		\label{eq:Xihat}
		d(\gamma_{a}z)^{-1}\partial_{r}(\gamma_{a}z)=
		\langle\Psi_{2}(\cdot,\gamma_{a}z),\Phi_{r}(\cdot)\rangle
		-\langle\Psi_{2}(\cdot,z),\Phi_{r}(\cdot)\rangle
		=-\widehat{\Xi}_{r}[\gamma_{a}](z),
	\end{align}
	where  $\widehat{\Xi}_{r}[\gamma_{a}](z)=\widehat{f_{r}}(\gamma_{a}z)d(\gamma_{a}z)^{-1}-\widehat{f_{r}}(z)dz^{-1}$ 
	is the Eichler cocycle for $\Phi_{r}$ with a potential $\widehat{f_{r}}$ obtained from \eqref{eq:fpotential}. Hence $f_{r}=\widehat{f_{r}}+p(z)$ for some quadratic $p(z)$ where $f_{r}$ is the $\Phi_{r}$ potential determined in \eqref{eq:delmgamz}. Comparing \eqref{eq:delmgamz} and \eqref{eq:Xihat} we  find 
	\begin{align*}
\left(\partial_{r}+\pi\partial_{\eta_{r}} \right)(\gamma_{a}z)
=p(\gamma_{a}z)-p(z)\partial_{z}(\gamma_{a}z)=\D^{p}(\gamma_{a}z), \quad a\in\Ip,
	\end{align*}
for a global M\"obius generator $\D^{p}$ of \eqref{eq:DpMobius}.
Thus $\partial_{r}+\pi\partial_{\eta_{r}}=\D^{p}$ and hence 
	\begin{align}\label{eq:tangents}
		\partial_{r}F(\bm{\eta})=-\pi\partial_{\eta_{r}}F(\bm{\eta}),
	\end{align} 
for any differentiable function $F$ on $\Mg$. Using Lemma~\ref{lem:nablaM} and the chain rule we find
\begin{align*}
\nabla_{\Mg}(x)F(\bm{\eta})=\nabla(x)F(\bm{\eta})=\sum_{s=1}^{3g-3}\Theta_{s}(x)\partial_{\eta_{s}}F(\bm{\eta}),
\end{align*}
for $\Theta_{s}(x):=\delx \eta_{s}\in\calH_{2}$. 
But 
\eqref{eq:delrtan} and \eqref{eq:tangents} imply $
		\langle\Theta_{s},\Phi_{r}\rangle =-\pi \delta_{rs}$	so that $\Theta_{s}(x)=-\pi \Phi^{\vee}_{s}(x)$ and the result follows.
\end{proof}
We may choose $3g-3$ locally independent components of the period matrix $\tau$ of \eqref{eq:period} as local coordinates on $\Mg$ (the Schottky problem). 
Let $\tauK:=\ldots,\tau_{ab},\ldots$ for $(a,b)\in\K$,  a $\tau$ label set of cardinality $3g-3$ for such a choice. 
Recalling the 1-form basis $\{\nu_{a}\}_{a\in\Ip}$ of \eqref{eq:nu} 
and defining $\partial_{ab}:=\partial_{\tau_{ab}}=\frac{1}{\tpi}\partial_{\Omega_{ab}}$ we find
\begin{lemma}\label{lem:nablaMperiod}
	Let $\tauK$ be local coordinates on $\Mg$ for some independent period matrix elements with label set $\K$. Then 
	\begin{align}	\label{eq:nablagtau}
		\nabla_{\Mg}(x)=
		\sum_{(a,b)\in\K}\nu_{a}(x)\nu_{b}(x)\partial_{ab},
	\end{align}
	where $\{\nu_{a}(x)\nu_{b}(x)\}_{(a,b)\in\K}$ is a $\calH_{2}$-basis which is the Petersson dual of the $\calH_{2}$-basis paired with the local $T(\Mg)$-basis $\{-\frac{1}{\pi}\partial_{ab}\}_{(a,b)\in\K}$ by the canonical Ahlfors map. 
\end{lemma}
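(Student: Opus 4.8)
The plan is to take the $3g-3$ period elements $\tauK$ themselves as the local coordinates $\bm{\eta}$ appearing in Lemma~\ref{lem:nablaM}, and then read off the coefficients of $\nabla_{\Mg}(x)$ in the basis $\{\partial_{ab}\}_{(a,b)\in\K}$ of $T(\Mg)$ directly from Rauch's formula \eqref{eq:nab_Om}. Since $\tauK$ are by hypothesis local coordinates, the operators $\{\partial_{ab}\}_{(a,b)\in\K}$ form a basis of $T(\Mg)$, and I can proceed exactly as in the chain-rule computation at the end of the proof of Proposition~\ref{prop:nablaM}.

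First I would apply Lemma~\ref{lem:nablaM} to obtain, for any differentiable $F$ on $\Mg$, the expansion $\nabla(x)F(\tauK)=\nabla_{\Mg}(x)F(\tauK)=\sum_{(a,b)\in\K}\Theta_{ab}(x)\,\partial_{ab}F(\tauK)$, where the coefficient $\Theta_{ab}(x):=\nabla(x)\tau_{ab}\in\calH_{2}$ is recovered by letting the field act on the coordinate function $\tau_{ab}$ itself. Invoking Rauch's formula \eqref{eq:nab_Om}, $\nabla_{\Mg}(x)\tau_{ab}=\nu_{a}(x)\nu_{b}(x)$, immediately gives $\Theta_{ab}(x)=\nu_{a}(x)\nu_{b}(x)$ and hence the claimed expression $\nabla_{\Mg}(x)=\sum_{(a,b)\in\K}\nu_{a}(x)\nu_{b}(x)\,\partial_{ab}$. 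One consistency check worth noting is that, for $g\ge 4$, period entries $\tau_{cd}$ with $(c,d)\notin\K$ are functions of $\tauK$, and applying the formula to $\tau_{cd}$ reproduces Rauch's $\nu_{c}\nu_{d}$ through the chain rule, so nothing is overdetermined.

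It then remains to identify $\{\nu_{a}(x)\nu_{b}(x)\}_{(a,b)\in\K}$ as the asserted Petersson-dual basis. Linear independence is automatic: the $\partial_{ab}$ form a basis of $T(\Mg)$, the canonical antilinear Ahlfors map is a bijection onto $\calH_{2}$, and since $\dim\calH_{2}=3g-3$ the resulting spanning set is a basis. For the duality I would compare the formula just obtained with Proposition~\ref{prop:nablaM}, $\nabla_{\Mg}(x)=-\pi\sum_{r}\Phi^{\vee}_{r}(x)\partial_{\eta_{r}}$, specialized to $\eta_{r}=\tau_{ab}$. This gives $\nu_{a}(x)\nu_{b}(x)=-\pi\,\Phi^{\vee}_{ab}(x)$, where $\Phi_{ab}\in\calH_{2}$ is the Ahlfors partner of $\partial_{ab}$ and $\Phi^{\vee}_{ab}$ its Petersson dual. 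Because the Ahlfors map is antilinear and $-\tfrac{1}{\pi}$ is real, the $\calH_{2}$-basis paired with $\{-\tfrac{1}{\pi}\partial_{ab}\}$ is $\{-\tfrac{1}{\pi}\Phi_{ab}\}$, whose Petersson dual is $\{-\pi\,\Phi^{\vee}_{ab}\}=\{\nu_{a}(x)\nu_{b}(x)\}$, exactly the stated characterization.

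The hard part will be bookkeeping the normalization through the antilinear Ahlfors correspondence and the Petersson product: one must track how the real factor $-\tfrac{1}{\pi}$ transforms under the antilinear pairing and how a dual basis rescales inversely under the Petersson product, so that the two occurrences of $\pi$ coming from Proposition~\ref{prop:nablaM} combine correctly. By contrast, the derivation of the explicit formula is routine once Lemma~\ref{lem:nablaM}, Rauch's formula \eqref{eq:nab_Om}, and the Ahlfors setup of Proposition~\ref{prop:nablaM} are in hand.
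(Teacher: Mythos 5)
Your proposal is correct and follows essentially the same route as the paper: the chain rule combined with Rauch's formula \eqref{eq:nab_Om} yields the explicit expansion $\nabla_{\Mg}(x)=\sum_{(a,b)\in\K}\nu_{a}(x)\nu_{b}(x)\partial_{ab}$, and comparison with Proposition~\ref{prop:nablaM} (specialised to the coordinates $\tauK$) gives the Petersson-duality characterisation. The only difference is that you spell out the antilinearity/normalisation bookkeeping of the Ahlfors correspondence explicitly, which the paper leaves implicit; this is a harmless elaboration, not a different method.
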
 
\begin{proof}
	Let $F(\tauK)$ be a locally differentiable function on $\Mg$.
Eqn.~\eqref{eq:nablagtau}  follows from the chain rule and \eqref{eq:nab_Om} where we find
	\begin{align*}
		\nabla_{\Mg}(x)F(\tauK)=
		\sum_{(a,b)\in\K}\left(\nabla_{\Mg}(x)\tau_{ab}\right)\partial_{ab}F(\tauK)
		=\sum_{(a,b)\in\K}\nu_{a}(x)\nu_{b}(x)\partial_{ab}F(\tauK).
	\end{align*}
	From Proposition~\ref{prop:nablaM} we find that $\{\nu_{a}(x)\nu_{b}(x)\}_{(a,b)\in\K}$ is a local $\calH_{2}$-basis dual to the $\calH_{2}$-basis paired with $\{-\frac{1}{\pi}\partial_{ab}\}_{(a,b)\in\K}$.
\end{proof} 
Finally, we remark that for any differentiable  function $F(\tau)$ of the full period matrix $\tau$ then Lemma~\ref{lem:nablaM}, the chain rule and \eqref{eq:nab_Om} directly imply that
\begin{align*}
	\nabla_{\Mg}(x)F(\tau)=
	\sum_{1\le a\le b\le g}
	\nu_{a}(x)\nu_{b}(x)\partial_{ab}F(\tau).
\end{align*}

\section{Vertex Operator Algebras and Genus $g$ Zhu Recursion}
\label{sec:VOAs}
\subsection{Vertex operator algebras} 
For indeterminates  $x,y$ we adopt the binomial expansion convention that for $m\in\Z$ 
\begin{align*}
	(x+y)^{m}=\sum_{k\ge 0}\binom{m}{k}x^{m-k}y^{k}.
\end{align*} 

We review some aspects of vertex operator algebras e.g.~\cite{K,FHL,LL,MT1}. A vertex operator algebra (VOA) is a quadruple $(V,Y(\cdot,\cdot),\vac,\omega)$ consisting of a graded vector space $V=\bigoplus_{n\ge 0}V_{n}$, with $\dim V_{n}<\infty$,  with two distinguished elements: the vacuum vector $\vac\in V_{0}$  and the Virasoro conformal vector $\omega\in V_{2}$.  
For each $v \in V$ there exists a vertex operator, a formal Laurent series in $z$, given by   
\begin{align*}
	Y(u,z)=\sum_{n\in\Z}u(n)z^{-n-1},
\end{align*}
for \emph{modes} $u(n)\in\End(V)$. 
For each  $u,v\in V$ we have $u(n)v=0$ for all $n\gg 0$, known as \emph{lower truncation}, and 
$u=u(-1)\vac $ and $u(n)\vac =0$ for all $n\ge 0$,
known as \emph{creativity}.
The vertex operators also obey  \emph{locality}:
\begin{align*}
	(x-y)^{N}[Y(u,x),Y(v,y)]=0,\quad N\gg 0.
\end{align*}
For the Virasoro conformal vector
\begin{align*}
	Y(\omega,z)=\sum_{n\in\Z}L(n)z^{-n-2},
\end{align*}
where the operators  $L(n)=\omega(n+1)$  satisfy the Virasoro  algebra
\begin{align*}
	[L(m),L(n)]=(m-n)L(m+n)+\frac{c}{2}\binom{m+1}{3}\delta_{m,-n}\Id_{V},
\end{align*}
for a constant \emph{central charge} $c\in\C$.   Vertex operators  satisfy the \emph{translation property}: 
\begin{align*}
	Y(L(-1)u,z)=\del Y(u,z).
\end{align*}
Finally, $V_{n}=\{v\in V:L(0)v=nv\}$ where
$v\in V_{n}$ is the \emph{(conformal) weight} $\wt(v)=n$. We quote a number of basic VOA properties e.g.~\cite{K,FHL,LL,MT1}. For $u\in V$ of weight $N$ we have
\begin{align*}
	u(j):V_{k}\rightarrow V_{k+N-j-1}.
\end{align*}
The commutator identity: for all $u,v\in V$ we have 
\begin{align*}
	[u(k),Y(v,z)]=\left (\sum_{j\ge 0}Y(u(j)v,z)\partial_{z} ^{(j)}\right)z^{k}.
\end{align*} 
The associativity identity: for each $u,v\in V$  there exists $M\ge 0$ such that
\begin{align*}
	(x+y)^{M}Y(Y(u,x)v,y)=(x+y)^{M}Y(u,x+y)Y(v,y).
\end{align*}
Associated with the formal M\"obius map $z\rightarrow {\rho}/{z}$, for  a given scalar $\rho\neq 0$,
we define an adjoint vertex operator \cite{FHL, L}
\begin{align*}
	Y_{\rho}^{\dagger}(u,z):=\sum_{n\in\Z}u_{\rho}^{\dagger}(n)z^{-n-1}=Y\left(e^{\frac{z}{\rho}L(1)}\left(-\frac{\rho}{z^{2}}\right)^{L(0)}u,\frac{\rho}{z}\right).
\end{align*}
We write $Y^{\dagger}(u,z)$ for the adjoint when $\rho=1$.
For quasiprimary $u$ (i.e. $L(1)u=0$) of weight $N$ we have
\begin{align}
	\label{eq:udagger}
	u_{\rho}^{\dagger}(n)=(-1)^{N}\rho^{n+1-N}u(2N-2-n),
\end{align}
e.g. $L_{\rho}^{\dagger}(n)=\rho^{n}L(-n)$.
A bilinear form $\langle \cdot,\cdot\rangle_{\rho}$ on $V$  is said to be invariant if
\begin{align*}
	\langle Y (u, z)v, w\rangle_{\rho} = \langle v, Y_{\rho}^{\dagger}
	(u, z)w\rangle_{\rho},\quad \forall\;u,v,w\in V.
\end{align*}
If $\rho=1$ then we omit the $\rho$ subscripts.
$\langle \cdot,\cdot\rangle_{\rho}$ is symmetric and $\langle u,v\rangle_{\rho}=0$ for $\wt(u)\neq\wt(v)$~\cite{FHL} with 
\begin{align*}
	\langle u,v\rangle_{\rho}=\rho^{N}\langle u,v\rangle,
	\quad N=\wt(u)=\wt(v).
\end{align*} 
We assume throughout this paper that $V$ is of strong CFT-type i.e. $V_{0} = \C\vac$ and $L(1)V= 0$. Then the bilinear form with normalisation $\langle \vac,\vac\rangle_{\rho}=1$ is unique~\cite{L}. We also assume that $V$ is simple and isomorphic to the contragredient $V$-module $V'$ \cite{FHL}. 
Then the bilinear form is non-degenerate~\cite{L}. We refer to this unique invariant non-degenerate bilinear form as the Li-Zamolodchikov (Li-Z) metric.

\subsection{Genus $g$ correlation functions}
Define the genus zero $n$-point (correlation) function for $\bm{v}:=v,\ldots,v_{n}$ inserted at $\bm{z}:=z,\ldots,z_{n}$, respectively,  by\footnote{The superscript $(0)$ on $\Zzero(\bm{v,z})$ refers to the genus.}
\begin{align*}
	\Zzero(\bm{v,z}):=\Zzero(\ldots;v_{k},z_{k};\ldots)=\langle \vac,\bm{Y(v,z)}\vac\rangle,
\end{align*}
for ($\rho=1$) Li-Z metric $\langle\cdot,\cdot\rangle$ and 
\begin{align*}
	\bm{Y(v,z)}:= Y(v_{1},z_{1}) \ldots Y (v_{n},z_{n}).
\end{align*}
$\Zzero(\bm{v,z})$ can be extended to a rational function 
in $\bm{z}$ in the domain $|z_{1}|>\ldots >|z_{n}|$.  

We next define genus $g$ correlation functions in terms of certain infinite sums of genus zero correlation functions based on a formal version of the Schottky sewing scheme.
For each $a\in\Ip$, let $\{b_{a}\}$  denote a homogeneous  $V$-basis and let $\{\bbar _{a}\}$ be the  dual basis with respect to the Li-Z metric $\langle \cdot,\cdot\rangle$  i.e. with $\rho=1$.  Define
\begin{align}
\label{eq:bbar}
b_{-a}:=\rho_{a}^{\wt(b_{a})}\bbar _{a},\quad a\in\Ip,
\end{align}
for formal $\rho_{a}$ (later  identified with a Schottky sewing parameter). 
Then $\{b_{-a}\}$ is a dual basis for the Li-Z metric $\langle \cdot,\cdot\rangle_{\rho_{a}}$ with adjoint (cf.~\eqref{eq:udagger})
\begin{align*}
u^{\dagger}_{\rho_{a}}(m)=(-1)^{N}\rho_{a}^{m+1-N}u(2N-2-m),
\end{align*}
for $u$ quasiprimary  of weight $N$.
Let $\bm{b}_{+}=b_{1}\otimes\ldots \otimes b_{g}$ denote an element of a $V^{\otimes g}$-basis. Let $w_{a}$ for $a\in\I$ be $2g$ formal variables (later identified with the canonical Schottky parameters).
Consider the genus zero rational $2g$-point function
\begin{align*}
\Zzero(\bm{b,w})=\Zzero(b_{1},w_{1};b_{-1},w_{-1};\ldots;b_{g},w_{g};b_{-g},w_{-g})
,
\end{align*}
for  
$\bm{b,w}=b_{1},w_{1},b_{-1},w_{-1},\ldots,b_{g},w_{g},b_{-g},w_{-g}$. 
Define the genus $g$ partition function by
\begin{align}\label{eq:Zg}
\Zg_{V}:=\Zg_{V}(\bm{w,\rho})
=\sum_{\bm{b_{+}}}\Zzero(\bm{b,w}),
\end{align}
for $\bm{w,\rho}=w_{1},w_{-1},\rho_{1},\ldots,w_{g},w_{-g},\rho_{g}$  and 
where the sum is over any basis $\{\bm{b}_{+}\}$ of $V^{\otimes g}$.
This definition is motivated by the sewing relation~\eqref{eq:SchottkySewing2} and ideas in~\cite{MT2,MT3, T1,TW2}. This is similar to the sewing analysis  employed in \cite{Z2, C, DGT,G}. We suppress the genus superscript label $(g)$ except for genus zero. The genus $g$ partition function is formally M\"obius invariant where for $\D^{p}$ of \eqref{eq:DpMobius} we find \cite{TW1}
\begin{proposition}\label{prop:DpZ}
	$\D^{p}\Zg_{V}=0$ for any quadratic polynomial $p$.
\end{proposition}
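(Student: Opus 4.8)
The plan is to derive the identity from the genus-zero conformal Ward identity together with the duality (Casimir) relation already built into the definition \eqref{eq:Zg}. Write $p(z)=p_{0}+p_{1}z+p_{2}z^{2}$ and set $L_{p}:=p_{0}L(-1)+p_{1}L(0)+p_{2}L(1)$. Since $\omega(0)=L(-1)$, $\omega(1)=L(0)$, $\omega(2)=L(1)$ all annihilate $\vac$ by creativity, and dually $\langle\vac,L(n)\,\cdot\,\rangle=\langle L(-n)\vac,\,\cdot\,\rangle=0$ for $n\le 1$ by the Li--Z adjoint $L^{\dagger}(n)=L(-n)$, we have $L_{p}\vac=0=\langle\vac,L_{p}\,\cdot\,\rangle$. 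Commuting $L_{p}$ through the string of vertex operators in $\Zzero(\bm{b,w})$ by means of $[L_{p},Y(v,z)]=\big(p(z)\partial_{z}+p^{(1)}(z)\wt(v)\big)Y(v,z)+p^{(2)}(z)Y(L(1)v,z)$ for homogeneous $v$ (with $p^{(2)}(z)$ the constant leading coefficient of $p$) then yields the Ward identity
\begin{align*}
\sum_{a\in\I}\big(p(w_{a})\partial_{w_{a}}+p^{(1)}(w_{a})\wt(b_{a})\big)\Zzero(\bm{b,w})
=-p^{(2)}\sum_{a\in\I}\Zzero(\ldots;L(1)b_{a},w_{a};\ldots),
\end{align*}
where $b_{a}$ denotes the insertion at $w_{a}$ for every $a\in\I$.

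Next I would compute $\D^{p}\Zzero(\bm{b,w})$ directly from \eqref{eq:DpMobius}. By \eqref{eq:bbar} the only $\rho_{a}$-dependence of the summand is through $b_{-a}=\rho_{a}^{\wt(b_{a})}\bbar_{a}$, so that $\rho_{a}\partial_{\rho_{a}}\Zzero=\wt(b_{a})\Zzero$; using $\wt(b_{-a})=\wt(b_{a})$ the coefficients $p_{a}^{\ell}$ reorganise exactly as
\begin{align*}
\D^{p}\Zzero(\bm{b,w})=\big(\text{LHS of the Ward identity}\big)
+p^{(2)}\sum_{a\in\Ip}\rho_{a}\big(\partial_{w_{a}}+\partial_{w_{-a}}\big)\Zzero(\bm{b,w}).
\end{align*}
Applying the Ward identity turns the first piece into $-p^{(2)}\sum_{a\in\I}\Zzero(\ldots;L(1)b_{a},w_{a};\ldots)$, while the translation property $\partial_{w}\Zzero=\Zzero(\ldots;L(-1)\,\cdot\,,w;\ldots)$ turns the second piece into $+p^{(2)}\sum_{a\in\Ip}\rho_{a}$ times $L(-1)$-insertions at $w_{a}$ and at $w_{-a}$.

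The decisive step is to sum over the basis $\bm{b_{+}}$ and cancel these two families handle by handle. For fixed $a\in\Ip$ the pair $\{b_{a}^{(i)}\}$, $\{b_{-a}^{(i)}\}$ is dual with respect to $\langle\cdot,\cdot\rangle_{\rho_{a}}$, so the Casimir element $\sum_{i}b_{a}^{(i)}\otimes b_{-a}^{(i)}$ obeys the sliding relation $\sum_{i}(Ab_{a}^{(i)})\otimes b_{-a}^{(i)}=\sum_{i}b_{a}^{(i)}\otimes(A^{\dagger_{\rho_{a}}}b_{-a}^{(i)})$ for any homogeneous operator $A$; equivalently, by multilinearity of $\Zzero$, inside the correlator one may trade $Ab_{a}^{(i)}$ at $w_{a}$ for $A^{\dagger_{\rho_{a}}}b_{-a}^{(i)}$ at $w_{-a}$ under the sum over $i$. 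Taking $A=L(1)$ and using \eqref{eq:udagger}, i.e. $L^{\dagger}_{\rho_{a}}(1)=\rho_{a}L(-1)$, each $L(1)$-insertion produced by the Ward identity becomes precisely the $\rho_{a}$-weighted $L(-1)$-insertion produced by the extra terms, but with opposite sign. The four contributions per handle therefore cancel in two pairs, giving $\D^{p}\Zg_{V}=0$.

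I expect the main obstacle to be bookkeeping rather than anything conceptual. One must reorganise the three families of derivatives in $\D^{p}$---in particular the cross-terms in $p_{a}^{0}$ and $p_{a}^{2}$ that each contribute a $\rho_{a}p^{(2)}$---so that exactly the combination on the left of the genus-zero Ward identity is isolated, leaving the clean remainder $p^{(2)}\sum_{a\in\Ip}\rho_{a}(\partial_{w_{a}}+\partial_{w_{-a}})$. One must also verify that the weight gradings match across the sliding relation, since $L(1)$ lowers and $L(-1)$ raises weight by one, so that the cancellation is valid sector by sector under the (formal) sum over $\bm{b_{+}}$. Once the $\rho_{a}$ emerging from $L^{\dagger}_{\rho_{a}}(1)=\rho_{a}L(-1)$ is recognised as the same $\rho_{a}$ multiplying the extra terms, the cancellation is immediate.
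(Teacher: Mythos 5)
Your proof is correct, and it is worth noting at the outset that the paper itself contains no proof of this proposition: it is quoted from \cite{TW1} ("we find [TW1]"), so there is no in-paper argument to compare against. Your route --- an infinitesimal genus-zero conformal Ward identity plus dual-basis "sliding" --- is the natural one and almost certainly the spirit of the cited source. The key steps all check out against the paper's conventions: the commutator formula $[L_{p},Y(v,z)]=\bigl(p(z)\partial_{z}+p^{(1)}(z)\wt(v)\bigr)Y(v,z)+p^{(2)}Y(L(1)v,z)$ follows from the commutator identity with $u=\omega$, $k=0,1,2$ (recalling $p^{(2)}=\tfrac12 p''$ is the constant leading coefficient); $L_{p}\vac=0$ by creativity and $\langle\vac,L_{p}\,\cdot\,\rangle=0$ by invariance and symmetry of the Li--Z form, which yields your Ward identity for each summand $\Zzero(\bm{b,w})$; the reorganisation of $\D^{p}$ is right because $\rho_{a}\partial_{\rho_{a}}\Zzero(\bm{b,w})=\wt(b_{a})\Zzero(\bm{b,w})$ (the only $\rho_{a}$-dependence sits in $b_{-a}=\rho_{a}^{\wt(b_{a})}\bbar_{a}$) and $\wt(b_{-a})=\wt(b_{a})$, so the $\rho_{a}\partial_{\rho_{a}}$ terms reproduce exactly the weight terms of the Ward identity, leaving the clean remainder $p^{(2)}\sum_{a\in\Ip}\rho_{a}(\partial_{w_{a}}+\partial_{w_{-a}})$; and the Casimir sliding relation is the standard dual-basis identity for the symmetric form $\langle\cdot,\cdot\rangle_{\rho_{a}}$, valid order by order in $\rho_{a}$ since $L(1):V_{n}\to V_{n-1}$ matches the $\rho_{a}^{n}$ graded piece on one side with the $\rho_{a}\cdot\rho_{a}^{n-1}$ piece on the other (your weight-matching concern resolves affirmatively). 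With $L_{\rho_{a}}^{\dagger}(1)=\rho_{a}L(-1)$ from \eqref{eq:udagger} and the translation property converting the $\partial_{w_{\pm a}}$ terms into $L(-1)$ insertions, the per-handle cancellation in two pairs goes through exactly as you describe, with the spectator tensor factors unaffected. The one caution is that you should apply the sliding relation only after summing over the $a$-th factor of the $V^{\otimes g}$ basis (it is an identity of basis sums, not of individual terms), which your write-up does respect.
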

Define the genus $g$ formal $n$-point function for $v_{1},\ldots,v_{n}\in V$ inserted at $z_{1},\ldots,z_{n}$ by
\begin{align*}
\Zg_{V}(\bm{v,z}):=\Zg_{V}(\bm{v,z};\bm{w,\rho})
=
\sum_{\bm{b}_{+}}\Zzero(\bm{v,z};\bm{b,w}),
\end{align*}
for rational genus zero $(n+2g)$-point functions 
\begin{align*}
\Zzero(\bm{v,z};\bm{b,w})=\Zzero(v_{1},z_{1};\ldots;v_{n},z_{n};b_{-1},w_{-1};\ldots;b_{g},w_{g}).
\end{align*}
 We also define the corresponding genus $g$ formal $n$-point correlation differential form
\begin{align*}
\calF_{V}(\bm{v,z}):=Z_{V}(\bm{v,z})\bm{dz^{\wt(v)}}.
\end{align*}
From \eqref{eq:bbar} we note that
	\begin{align*}
		\calF_{V}(\bm{v,z})=
		\sum_{n_{1},\ldots,n_{g}\ge 0}\rho_{1}^{n_{1}}\ldots \rho_{g}^{n_{g}}\sum_{b_{1}\in V_{n_{1}}}\ldots \sum_{b_{g}\in V_{n_{g}}} 
		\Zzero(\bm{v,z};b_{1},w_{1},\ldots,\bbar_{g},w_{-g})\bm{dz^{\wt(v)}}.
	\end{align*}
Thus the expansion of $\calF_{V}(\bm{v,z})$ in $\bm{\rho}$ to order  $\rho_{1}^{n_{1}}\ldots \rho_{g}^{n_{g}}$ has rational coefficient functions of $w_{a}$ and $\bm{z}$ and is convergent for all $(w_{\pm a},\rho_{a})\in\Cg$ and for all $z_{i}\neq z_{j}\in\Omega_{0}$ for $i\neq j$.

\begin{remark}\label{rem:Gui}
	If $V$ is $C_{2}$-cofinite then Gui's Theorem~13.1 \cite{G} implies that $\calF_{V}(\bm{v,z})$ is absolutely and locally uniformly convergent in the Schottky sewing domain. Here we treat $\calF_{V}(\bm{v,z})$ formally since Zhu recursion does not require $C_{2}$-cofiniteness.
\end{remark}
The $\bm{\rho}$ expansion of $Z_{V}$ has rational coefficient functions of $w_{a}$ convergent for all $(w_{\pm a},\rho_{a})\in\Cg$. Recalling, \eqref{eq:wadef} and \eqref{eq:rhoadef} relating $\rho_{a},w_{\pm a}$ to the original Schottky parameters, we may also consider the expansion of $Z_{V}$ in $\bm{q}:=q_{1},\ldots,q_{g}$. We then find
\begin{proposition}\label{prop:ZVqexp}
	The expansion of $Z_{V}$ to any finite order  $q_{1}^{n_{1}}\ldots q_{g}^{n_{g}}$ 
	is convergent on Schottky space $\Schg$. 
\end{proposition}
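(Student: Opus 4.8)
The plan is to substitute the explicit holomorphic relations \eqref{eq:wadef} and \eqref{eq:rhoadef}, valid for $0<|q_{a}|<1$, into the known $\bm{\rho}$-expansion $Z_{V}=\sum_{\bm{m}\ge 0}P_{\bm{m}}(\bm{w})\,\bm{\rho}^{\bm{m}}$ and to re-collect the result in powers of $\bm{q}$. Here each coefficient $P_{\bm{m}}(\bm{w})$ is a finite sum over homogeneous basis vectors $b_{a}\in V_{m_{a}}$ of genus-zero $2g$-point functions $\Zzero$. The single structural fact driving everything is that, by \eqref{eq:rhoadef}, each $\rho_{a}$ vanishes to first order in $q_{a}$, namely $\rho_{a}=-(W_{-a}-W_{a})^{2}q_{a}+O(q_{a}^{2})$, while $w_{a}$ of \eqref{eq:wadef} is holomorphic in $q_{a}$ with $w_{a}|_{q_{a}=0}=W_{a}$.

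First I would establish the finiteness that makes the phrase ``to finite order'' essential. Since $\bm{\rho}^{\bm{m}}$ has $\bm{q}$-order at least $\bm{m}$ componentwise, the coefficient of $q_{1}^{n_{1}}\cdots q_{g}^{n_{g}}$ in $Z_{V}$ collects contributions only from the finitely many $\bm{m}$ with $m_{a}\le n_{a}$ for all $a\in\Ip$. Thus each such coefficient is a \emph{finite} sum, obtained by expanding the finitely many rational functions $P_{\bm{m}}(\bm{w})$, after the substitution $\bm{w}=\bm{w}(\bm{W},\bm{q})$, in $\bm{q}$ about $\bm{q}=0$. No interchange of the $\bm{m}$-sum with the $\bm{q}$-expansion is required, so the convergence of the full re-expansion never enters and the step is purely formal.

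Next I would identify the coefficients and their singular loci. Each $P_{\bm{m}}(\bm{w})$, being a finite sum over the finite-dimensional spaces $V_{m_{a}}$ of genus-zero correlators, is rational in the $w_{a}$ with poles only along the coincidence loci $w_{a}=w_{b}$, $a\neq b\in\I$. Since the $w_{a}(\bm{W},\bm{q})$ are rational in the fixed points $W_{\pm a}$, the extracted $\bm{q}$-coefficients are rational in $W_{\pm a}$ with poles confined to the $\bm{q}\to 0$ limit $\{W_{a}=W_{b}\}$ of these loci. On $\Schg$ the Schottky generators are loxodromic and freely generate a discrete group, so all $2g$ fixed points $W_{\pm a}$ are distinct; equivalently the Jordan inequality \eqref{eq:JordanIneq} forces $|w_{a}-w_{b}|>0$. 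Hence the coincidence loci are never met, and every finite-order $\bm{q}$-coefficient is holomorphic on $\Schg$. Descent to the quotient $\Schg=\Cg/\SL_{2}(\C)$ then follows from M\"obius invariance: each multiplier $q_{a}$ is $\SL_{2}(\C)$-invariant while the fixed points transform by $W_{\pm a}\mapsto\sigma W_{\pm a}$, so by Proposition~\ref{prop:DpZ} each $\bm{q}$-coefficient is $\SL_{2}(\C)$-invariant and well-defined on $\Schg$.

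I expect the main obstacle to be the combination of the finiteness step with the pole analysis: one must verify that the first-order vanishing $\rho_{a}\sim q_{a}$ really does render the $\bm{q}$-re-expansion triangular order-by-order, and that the genus-zero coincidence singularities of the $P_{\bm{m}}$ are avoided at every point of $\Schg$, so that each finite-order coefficient, \emph{a priori} only a rational function of the $W_{\pm a}$, is in fact holomorphic throughout Schottky space rather than merely on the round-disc region where the original $\bm{\rho}$-series was summed.
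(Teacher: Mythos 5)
Your proposal is correct and takes essentially the same route as the paper's proof: both use the triangularity coming from $w_{a}=W_{a}+O(q_{a})$ and $\rho_{a}=-(W_{-a}-W_{a})^{2}q_{a}+O(q_{a}^{2})$ to reduce a finite-order $\bm{q}$-expansion to a finite-order $\bm{\rho}$-expansion, observe that the resulting coefficients are rational functions of the $W_{\pm a}$ convergent on $\Cg$, and invoke M\"obius invariance via Proposition~\ref{prop:DpZ} to descend to $\Schg$. Your explicit pole analysis (poles only on the coincidence loci $W_{a}=W_{b}$, which are avoided since the fixed points lie in the pairwise disjoint Schottky discs) simply fills in the detail behind the paper's terse assertion that the $\bm{q}$-coefficients are ``convergent rational functions of $W_{\pm a}$ on $\Cg$''.
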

\begin{proof}
Since  $w_{a}=W_{a}+O(q_{a})$ and $\rho_{a}=-(W_{-a}-W_{a})^{2}q_{a}+O(q_{a}^{2})$ 
we may expand $Z_{V}$ in $\bm{q}$  to order  $q_{1}^{n_{1}}\ldots q_{g}^{n_{g}}$ from its $\bm{\rho}$ expansion to order  $\rho_{1}^{n_{1}}\ldots \rho_{g}^{n_{g}}$. Furthermore, the coefficients in the $\bm{q}$ expansion are convergent rational functions of $W_{\pm a}$ on $\Cg$. 
Proposition~\ref{prop:DpZ} implies that the coefficient functions are M\"obius invariant since $(W_{a},q_{a})\rightarrow(\sigma W_{a},q_{a})$ for  $\sigma\in\SL_{2}(\C)$. Thus the result follows.
\end{proof}

\subsection{Genus $g$ Zhu recursion}
We review the genus $g$ correlation function Zhu recursion formula\footnote{Zhu recursion for $V$-modules is discussed in ref.~\cite{TW1}.} for a VOA $V$ \cite{TW1}. This generalises the original Zhu \cite{Z1} recursion at genus zero and one. Zhu recursion is essentially a formal version of a residue expansion for meromorphic forms on Riemann surfaces  \cite{TW1, TW2}.
 \begin{theorem}\label{theor:ZhuGenusg}
	Let $V$ be a simple VOA of strong CFT-type with $V$ isomorphic  to $V'$.
The genus $g$ correlation form for quasiprimary $u$ of weight $N\ge 1$ inserted at $x\in\Sg$ and  $v_{1},\ldots,v_{n}\in V$ inserted at $z_{1},\ldots,z_{n} \in\Sg$, respectively, satisfies
\begin{align}
	\calF_{V}(u,x;\bm{v,z})&=\sum_{a\in\Ip}\sum_{\ell\in \calL_{N}} \Theta_{N,a}^{\ell}(x)\Res_{a}^{\ell}\calF_{V}(u;\bm{v,z})
	\label{eq:ZhuGenusg}
	\\
	\notag
	&
	+\sum_{k=1}^{n}\sum_{j\ge 0}\Psi_{N}^{(0,j)}(x,z_{k})\calF_{V}(\ldots;u(j)v_{k},z_{k};\ldots)\,dz_{k}^{j},
\end{align}
where 
$\Res_{a}^{\ell}\calF_{V}(u;\bm{v,z}):=
\Res_{x-w_{a}}\left(x-w_{a}\right)^{\ell}\calF_{V}(u,x;\bm{v,z})$, 
$\Psi_N(x,z)$ is the Bers quasiform~\eqref{eq:PsiNdef} and
$\{\Theta_{N,a}^{\ell}(x)\}$  of \eqref{eq:PsiCocyc} spans $\calH_{N}$.
\end{theorem}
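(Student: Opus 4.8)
The plan is to split the statement into a purely classical reproducing formula for meromorphic forms on $\Sg$ and a vertex-algebraic identification of principal parts. First I would establish that, for quasiprimary $u$ of weight $N$, the correlation form $\calF_{V}(u,x;\bm{v,z})$ is a meromorphic $N$-differential in $x$ on $\Sg$ whose only singularities are poles at the points $\gamma z_{k}$ for $\gamma\in\Gamma$, $k=1,\dots,n$. Genus-$g$ $\Gamma$-periodicity in $x$ should follow from the sewing definition \eqref{eq:Zg} together with the formal M\"obius invariance of Proposition~\ref{prop:DpZ}; the quasiprimary hypothesis $L(1)u=0$ is exactly what guarantees the weight-$N$ transformation law with no lower-weight anomaly. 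The pole structure at each $z_{k}$ is controlled by the operator product expansion: near $x=z_{k}$, associativity gives $Y(u,x)Y(v_{k},z_{k})=\sum_{j\ge 0}Y(u(j)v_{k},z_{k})(x-z_{k})^{-j-1}+\cdots$, so that the coefficient of $(x-z_{k})^{-j-1}$ in $\calF_{V}(u,x;\bm{v,z})$ is the lower correlation form $\calF_{V}(\ldots;u(j)v_{k},z_{k};\ldots)$, up to the weight factor $dz_{k}^{j}$.

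Next I would prove the reproducing formula that a meromorphic $N$-form $f(x)$ on $\Sg$ with poles only at the $z_{k}$ admits the representation
\begin{align*}
f(x)=\sum_{a\in\Ip}\sum_{\ell\in\calL_{N}}\Theta_{N,a}^{\ell}(x)\,\Res_{a}^{\ell}f+\sum_{k=1}^{n}\sum_{j\ge 0}\Psi_{N}^{(0,j)}(x,z_{k})\,c_{k,j}\,dz_{k}^{j},
\end{align*}
where $c_{k,j}$ is the $(x-z_{k})^{-j-1}$ principal-part coefficient of $f$. The engine is a contour integral of the weight-$(N,1)$ object $\Psi_{N}(x,y)f(y)$ in the variable $y$ around the boundary of a fundamental domain $\mathcal{D}$ bounded by the Schottky curves $\calC_{\pm a}$. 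Applying the residue theorem in $y$, the residue at $y=x$ reproduces $f(x)$ since $\Psi_{N}(x,y)$ has a simple pole of residue one there, while the poles at $y=z_{k}$ produce the $\Psi_{N}^{(0,j)}(x,z_{k})$ terms. For the boundary I would change variables $y\mapsto\gamma_{a}y$ to pair $\calC_{-a}=\gamma_{a}\calC_{a}$ against $\calC_{a}$; since $f$ is $\Gamma$-periodic while $\Psi_{N}$ is only quasiperiodic in $y$, the mismatch is governed by the cocycle \eqref{eq:PsiCocyc}, and the surviving integrals $\oint_{\calC_{a}}(y-w_{a})^{\ell}f(y)$ localise to the handle data $\Res_{a}^{\ell}f$. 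This is precisely where the holomorphic $N$-forms $\Theta_{N,a}^{\ell}(x)$ of \eqref{eq:PsiCocyc} enter, carrying the handle residues as coefficients.

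Combining the two ingredients with $f=\calF_{V}(u,x;\bm{v,z})$ and $c_{k,j}=\calF_{V}(\ldots;u(j)v_{k},z_{k};\ldots)$ then yields \eqref{eq:ZhuGenusg}. Because Zhu recursion must hold without $C_{2}$-cofiniteness, I would carry out the whole argument at the level of the $\bm{\rho}$-expansion of $\calF_{V}$: at each finite order the coefficient is a rational form in $x$ and the contour manipulations become elementary residue calculus on $\Chat$, with the Bers series $\Psi_{N}=\sum_{\gamma}\Pi_{N}(\gamma x,y)$ and the spanning set $\{\Theta_{N,a}^{\ell}\}$ matched order by order. The $N=1$ case must be handled separately using the modified kernel $\Psi_{1}$ and the relation \eqref{eq:omPoincare2}.

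I expect the main obstacle to be the rigorous interchange of the infinite basis sum $\sum_{\bm{b}_{+}}$ defining $\calF_{V}$ with the residue and contour operations, and, relatedly, the proof that this sum genuinely assembles into a single $\Gamma$-periodic $N$-form rather than a form with independent poles at each $w_{a}$. The resolution is that summing over the dual $V$-basis at handle $a$ implements the sewing relation \eqref{eq:SchottkySewing2} and, via the adjoint formula \eqref{eq:udagger}, converts an insertion colliding with $b_{a}$ at $w_{a}$ into an insertion transported through the handle by $\gamma_{a}$; iterating this transport is exactly what builds the Poincar\'e sum over $\Gamma$ both in $\Psi_{N}$ and in the pole set $\{\gamma z_{k}\}$. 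Making this transport precise, together with the convergence of the Bers series for $g,N\ge 2$ and the careful accounting of which poles the curves $\calC_{a}$ enclose, is the technical heart of the argument.
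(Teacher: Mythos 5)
You should first note that the paper does not actually prove Theorem~\ref{theor:ZhuGenusg}: it is reviewed from \cite{TW1}, with the proof there characterised as ``essentially a formal version of a residue expansion for meromorphic forms on Riemann surfaces.'' Your plan is exactly that: the kernel $\Psi_{N}(x,y)$ paired against an $N$-form over the boundary of a Schottky fundamental domain, the cocycle \eqref{eq:PsiCocyc} converting the boundary mismatch into the $\Theta_{N,a}^{\ell}(x)\Res_{a}^{\ell}$ terms, OPE/associativity identifying the principal parts at the $z_{k}$, and the whole argument run order by order in $\bm{\rho}$ so that $C_{2}$-cofiniteness is never needed. So in approach you agree with the cited proof.

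The one substantive correction: $\Gamma$-periodicity of $\calF_{V}(u,x;\bm{v,z})$ in $x$ does not follow from Proposition~\ref{prop:DpZ}. That proposition expresses global $\SL_{2}(\C)$ invariance of the partition function, which has nothing to do with invariance under the Schottky generators $\gamma_{a}$; indeed, at any finite order in $\bm{\rho}$ the coefficient of $\calF_{V}$ is a rational function of $x$ with poles at the points $w_{\pm a}$ as well as at the $z_{k}$, so it is not $\Gamma$-periodic term by term. The actual source of (formal) periodicity --- equivalently, of the fact that the handle data enters only through $\Res_{a}^{\ell}\calF_{V}$ and the Poincar\'e sums building $\Psi_{N}$ and $\Theta_{N,a}^{\ell}$ --- is invariance of the Li-Z metric: the adjoint identity \eqref{eq:udagger} applied to the dual-basis sums $\sum_{\bm{b}_{+}}$, which transports an insertion across a handle by $\gamma_{a}$. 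You do identify this mechanism correctly in your final paragraph as the technical heart, so the fix is to delete the appeal to Proposition~\ref{prop:DpZ} and let the dual-basis transport carry both the periodicity and the assembly of the Poincar\'e series; with that reorganisation your outline matches the argument of \cite{TW1}.
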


\section{Genus $g$ Virasoro Correlation Functions}\label{sec:Vir}
\subsection{The Virasoro Ward Identity}
We now apply Theorem~\ref{theor:ZhuGenusg} to compute all Virasoro correlation functions  $\calF_{V}(\bm{\omega,z})$ where the Virasoro vector $\omega$  is inserted at $\bm{z}=z_{1},\ldots,z_{n}$. 
We find $\Res_{a}^{\ell}\calF_{V}(\omega)=\partial_{a}^{\ell}Z_{V}$ for $\partial_{a}^{\ell}$ of \eqref{eq:delael} so that~\cite{TW1}
\begin{align}\label{eq:Fom}
\calF_{V}(\omega,x)=\nabla(x)Z_{V},
\end{align}
with $\nabla(x)$ of \eqref{eq:nabladef}. 
For the rank 1 Heisenberg VOA $M$, we may also compute $\calF_{M}(\omega,x)$ in an alternative way to obtain the following differential equation for $Z_{M}$~\cite{TW1}
\begin{align}\label{eq:nablaZM}
\nabla(x)Z_M=\frac{1}{12}s(x)Z_{M},
\end{align}
for projective connection $s(x)$ of \eqref{eq:projcon}. In general we find that Theorem~\ref{theor:ZhuGenusg} implies the following Ward identity~\cite{TW1}:
\begin{proposition}\label{prop:Ward}
\begin{align}
	&\calF_{V}(\omega,x;\bm{\omega,z})
	=\nabla_{\bm{z}}^{(\bm{2})}(x)\calF_{V}(\bm{\omega,z})
	+\frac{c}{2}\sum_{k=1}^{n}\omega_{2}(x,z_{k})\calF_{V}(\ldots;\widehat{\omega,z_{k}};\ldots),
	\label{eq:VirWard}
\end{align}
for $n$-tuple  $\bm{2}=2,\ldots,2$  and where
the caret denotes omission of the $\omega$ insertion at $z_{k}$ and $\omega_{2}(xzy)$ is the symmetric meromorphic $(2,2)$-form of \eqref{eq:omegaN}.
\end{proposition}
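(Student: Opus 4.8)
The plan is to apply the genus $g$ Zhu recursion of Theorem~\ref{theor:ZhuGenusg} directly to the correlation form $\calF_{V}(\omega,x;\bm{\omega,z})$, taking the distinguished insertion $u=\omega$ at $x$, which is quasiprimary of weight $N=2$. For $N=2$ the index set is $\calL_{2}=\{0,1,2\}$, the relevant Bers quasiform is $\Psi_{2}(x,z)$, and the spanning set $\{\Theta_{2,a}^{\ell}(x)\}$ is exactly the one appearing in the definition \eqref{eq:nabladef} of $\nabla(x)$. The recursion \eqref{eq:ZhuGenusg} then expresses $\calF_{V}(\omega,x;\bm{\omega,z})$ as the sum of a ``residue term'' over $a\in\Ip$, $\ell\in\calL_{2}$, plus a ``derivative term'' running over the other insertions $z_{k}$ and over the modes $\omega(j)$ for $j\ge 0$.

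First I would handle the residue term. Just as \eqref{eq:Fom} records $\Res_{a}^{\ell}\calF_{V}(\omega)=\partial_{a}^{\ell}Z_{V}$, I expect $\Res_{a}^{\ell}\calF_{V}(\omega;\bm{\omega,z})=\partial_{a}^{\ell}\calF_{V}(\bm{\omega,z})$, since taking the residue in the $x$-variable commutes with the remaining $\omega$ insertions at $\bm{z}$; summing against $\Theta_{2,a}^{\ell}(x)$ reproduces precisely the first summand $\nabla(x)\calF_{V}(\bm{\omega,z})$ of the operator $\nabla_{\bm{z}}^{(\bm{2})}(x)$ in \eqref{eq:nabla_xym}. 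Next I would treat the derivative term, where the essential input is the action of the modes $\omega(j)=L(j-1)$ on the insertion $v_{k}=\omega$. Because $\omega\in V_{2}$ and $V$ is of strong CFT-type, $L(j-1)\omega$ is nonzero only for a few small values of $j$: one finds $L(-1)\omega=\omega(0)\omega$ contributing the translation piece, $L(0)\omega=2\omega$ contributing a weight-counting piece, and the central term $L(1)\omega=0$ while $L(3)\vac$-type contributions produce $\frac{c}{2}\vac$. The key computation is thus to evaluate $\omega(j)\omega$ for $j=0,1,2,3$ using the Virasoro commutation relations together with creativity and $V_{0}=\C\vac$.

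The derivative term then splits naturally into two families. The pieces coming from $L(-1)\omega$ and $L(0)\omega$, combined with the $j$-dependent derivatives $\Psi_{2}^{(0,j)}(x,z_{k})\,dz_{k}^{j}$, must be shown to assemble into the remaining summands $\Psi_{2}(x,z_{k})\,d_{z_{k}}+2\,d_{z_{k}}(\Psi_{2}(x,z_{k}))$ of $\nabla_{\bm{z}}^{(\bm{2})}(x)$ acting on $\calF_{V}(\bm{\omega,z})$; the weight $m_{k}=2$ appears precisely because $\wt(\omega)=2$. The central piece, coming from the scalar term $\frac{c}{2}\vac$ produced by $\omega(3)\omega$, contributes $\frac{c}{2}\sum_{k}\Psi_{2}^{(0,3)}(x,z_{k})\,dz_{k}^{3}$ times the correlator with the $k$-th $\omega$ omitted; by the definition \eqref{eq:omegaN} of $\omega_{2}$ as $\Psi_{2}^{(0,3)}(x,z_{k})\,dz_{k}^{3}$, this is exactly $\frac{c}{2}\sum_{k}\omega_{2}(x,z_{k})\calF_{V}(\ldots;\widehat{\omega,z_{k}};\ldots)$. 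I expect the main obstacle to be the careful bookkeeping in the derivative term: one must match the formal $j$-derivatives $\Psi_{2}^{(0,j)}$ with the correct combinatorial coefficients arising from $\omega(j)\omega$, verify that the $j=0$ and $j=1$ terms recombine into the first-order operator $\Psi_{2}\,d_{z_{k}}+2\,d_{z_{k}}(\Psi_{2})$ rather than two unrelated pieces, and confirm that no further terms survive beyond $j=3$. Once these identifications are in place, collecting the residue and derivative contributions yields \eqref{eq:VirWard}.
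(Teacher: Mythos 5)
Your proposal is correct and takes essentially the same route as the paper, which presents Proposition~\ref{prop:Ward} as a direct consequence of Theorem~\ref{theor:ZhuGenusg} (quoting \cite{TW1}): apply genus~$g$ Zhu recursion with $u=\omega$, $N=2$, identify the residue terms with the $\nabla(x)$ contribution as in \eqref{eq:Fom}, and use $\omega(0)\omega=L(-1)\omega$, $\omega(1)\omega=2\omega$, $\omega(2)\omega=0$, $\omega(3)\omega=\tfrac{c}{2}\vac$ (all higher modes vanishing by weight) so that the $j=0,1$ terms assemble into $\Psi_{2}(x,z_{k})d_{z_{k}}+2\,d_{z_{k}}(\Psi_{2}(x,z_{k}))$ and the $j=3$ term gives $\tfrac{c}{2}\omega_{2}(x,z_{k})$ via \eqref{eq:omegaN}. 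The only blemish is the garbled description of the central contribution (it arises from $\omega(3)\omega=L(2)\omega=\tfrac{c}{2}\vac$, not ``$L(3)\vac$-type'' terms, and $L(1)\omega=0$ is quasiprimarity rather than a central term), which does not affect the argument.
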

\subsection{Virasoro $n$-point generating functions}
\begin{proposition}
\label{prop_Ggen}
$\calF_{V}(\bm{\omega,z})$ is symmetric in $z_i$ and is a generating function for all genus $g$ $n$-point correlation functions for Virasoro vacuum descendants.
\end{proposition}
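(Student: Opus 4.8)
The plan is to establish the two assertions separately: symmetry follows from locality of vertex operators, while the generating-function property follows from associativity together with the mode expansion $Y(\omega,x)=\sum_{n}L(n)x^{-n-2}$.

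For the symmetry I would argue at the level of the defining sum of the previous subsection, in which $\calF_{V}(\bm{\omega,z})$ is written as a sum over a $V^{\otimes g}$-basis of the rational genus zero functions $\Zzero(\bm{\omega,z};\bm{b,w})$. Each such genus zero function extends to a rational function that is invariant under any simultaneous permutation of the insertion pairs $(v_i,z_i)$, this being the standard consequence of locality $(x-y)^{N}[Y(u,x),Y(v,y)]=0$. Since every insertion here is the single weight-$2$ vector $\omega$, permuting the points $\bm{z}$ merely permutes identical genus zero insertions and hence fixes each summand, while the accompanying measure $dz_{1}^{2}\cdots dz_{n}^{2}$ is manifestly symmetric. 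Thus $\calF_{V}(\bm{\omega,z})$ is symmetric in $\bm{z}$.

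For the generating-function statement I would first recall that, since $V$ is of strong CFT-type with $L(-1)\vac=0$ and $L(n)\vac=0$ for $n\ge -1$, the Virasoro vacuum descendants are spanned by vectors $w=L(-k_{1})\cdots L(-k_{r})\vac$ with $k_{i}\ge 2$, the relations of the Virasoro algebra allowing any product of $L(-k)$'s to be reduced to this form. Writing $L(-k)=\omega(-k+1)$ and $\omega(m)=\Res_{x}x^{m}Y(\omega,x)$, the goal is to realize the insertion $Y(w,z)$ as an iterated residue of a product $Y(\omega,x_{1})\cdots Y(\omega,x_{r})$. The key tool is associativity: inside a correlation function the insertion $Y(\omega(m)v,z)$ is recovered as $\Res_{x-z}(x-z)^{m}Y(\omega,x)Y(v,z)$ in the clustering region where $x$ encircles only $z$ (the structure already visible in the $\omega(j)v_{k}$ terms of Theorem~\ref{theor:ZhuGenusg}). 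Applying this $r$ times with nested contours $|x_{1}-z|>\cdots>|x_{r}-z|>0$, starting from $v=\vac$ and $Y(\vac,z)=\Id$, expresses the descendant insertion entirely through $\omega$ insertions and residues. I would then lift this to genus $g$ by observing that the genus $g$ function is a convergent sum over a basis of the rational genus zero functions with the auxiliary insertions $\bm{b,w}$ held fixed; since the iterated residues act only on the colliding variables $x_{1},\ldots,x_{r}\to z$ and involve no $\bm{w},\bm{\rho}$ dependence, they commute with the summation and act termwise exactly as at genus zero. Clustering $r_{j}$ of the $\omega$-points near each $z_{j}$ and extracting the corresponding iterated residues then produces $\calF_{V}(w_{1},z_{1};\ldots;w_{m},z_{m})$ for arbitrary descendants $w_{1},\ldots,w_{m}$ from $\calF_{V}(\bm{\omega,x})$ with $n=\sum_{j}r_{j}$ insertions, exhibiting the claimed generating function.

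I expect the main obstacle to be the bookkeeping in the genus $g$ lift: one must verify that the nested-contour residue extraction is legitimate in the Schottky scheme, i.e. that the $\bm{\rho}$-expansion of $\calF_{V}(\bm{\omega,x})$ with its rational coefficient functions of the insertion points (as recorded in the expansion preceding Remark~\ref{rem:Gui}) may indeed be re-expanded in the clustering region $|x_{1}-z|>\cdots>|x_{r}-z|$, and that the residues commute with the $\bm{\rho}$-summation. Because this reduces entirely to the genus zero associativity property applied term-by-term to rational functions, the difficulty is organizational rather than conceptual.
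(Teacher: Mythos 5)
Your proposal is correct and takes essentially the same approach as the paper: symmetry via locality, and the generating-function property via associativity and lower truncation, recovering each descendant $L(-k_{i1})\cdots L(-k_{im_i})\vac$ from the Virasoro $n$-point function with $\omega$-insertions clustered at the points $z_{i}$, applied termwise to the genus zero summands. The only difference is presentational: you extract descendants by iterated contour residues in the clustering region, whereas the paper extracts them as coefficients of $\prod_{i,j}x_{ij}^{k_{ij}-2}$ in the formal expansion of $Z_{V}(\omega,z_{1}+x_{11};\ldots;\omega,z_{n}+x_{nm_{n}})$ --- the same operation on the underlying rational functions.
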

\begin{proof}
	The proof follows that for corresponding results in \cite{HT,GT}. 
$\calF_{V}(\bm{z})$ is symmetric  in $z_1,\ldots,z_n$ by locality.  
Consider the $n$-point function for $n$ Virasoro vacuum descendants
 $v_i=L(-k_{i1})\ldots L(-k_{im_i})\vac$ for $k_{ij}\ge 2$ inserted at $\bm{z}$ 
\begin{align*}
Z_V\left(v_{1},z_1; \ldots; v_{n},z_n\right) 
=\sum_{\bm{b}_{+}} \langle\vac,Y(v_{1},z_1)\ldots Y(v_{n},z_n)\bm{Y(b,w)}\vac\rangle.
\end{align*}
Then $\langle\vac, Y(v_{1},z_1)\ldots Y(v_{n},z_n)\bm{Y(b,w)}\vac\rangle$ is the coefficient of $\prod_{i=1}^{n}\prod_{j=1}^{m_{i}}(x_{ij})^{k_{ij}-2}$  in
\[
\langle\vac,
Y(Y(\omega,x_{11})\ldots Y(\omega,x_{1m_1} )\vac,z_1)\ldots 
Y(Y(\omega,x_{n1})\ldots Y(\omega,x_{nm_n} )\vac,z_n)\bm{Y(b,w)}\vac\rangle.
\] 
Using associativity and lower truncation  (e.g. \cite{K,LL,MT1}), we find for $N\gg 0$ that
\begin{align*}
&\prod_{i=1}^{n}\prod_{j=1}^{m_{i}}(x_{ij}+z_i)^{N}
Y(Y(\omega,x_{11})\ldots Y(\omega,x_{1m_1} )\vac,z_1)\ldots 
Y(Y(\omega,x_{n1})\ldots Y(\omega,x_{nm_n} )\vac,z_n)
\\
&=\prod_{i=1}^{n}\prod_{j=1}^{m_{i}}(x_{ij}+z_i)^{N}
Y(\omega,z_1+x_{11})\ldots Y(\omega,z_1+x_{1m_1} )\ldots 
Y(\omega,z_n+x_{n1})\ldots Y(\omega,z_n+x_{nm_n}).
\end{align*}
 Thus $Z_V\left(v_{1},z_1; \ldots; v_{n},z_n\right) $
is the coefficient of $\prod_{i=1}^{n}\prod_{j=1}^{m_{i}}(x_{ij})^{k_{ij}-2}$ 
of the formal expansion of $Z_V\left(\omega,z_1+x_{11};\ldots;\omega,z_n+x_{nm_n}\right)$.
\end{proof}
%
\subsection{Genus $g$ Virasoro graphs for Virasoro $n$-point functions}
The recursive Ward identity \eqref{eq:VirWard} is not manifestly symmetric in $z_{1},\ldots,z_{n}$.  We now describe a symmetric expression for $\calF_{V}(\bm{\omega,z})$. We exploit the differential equations \eqref{eq:nab_Om}--\eqref{eq:nab_som} and \eqref{eq:nablaZM} to develop a symmetric graph-theoretic description. A similar approach is given in \cite{HT} for $g=0,1$ and in \cite{GT} for the $g=2$ surface formed by sewing two tori.

We follow~\cite{HT} to define an order $n$ Virasoro graph  $g_n$ to be a directed graph with $n$ vertices labelled  $z_1,\ldots,z_n$.  Each $z_i$-vertex has degree $\deg(z_i) = 0$, $1$ or $2$. 
The degree-$1$ vertices can have either unit indegree or
outdegree whereas the degree-$2$ vertices have both unit indegree and outdegree. 
The connected subgraphs of $g_n$
consist of $r$-cycles, with $r \ge 1$ degree-$2$ vertices, and
chains with two degree-$1$ end vertices with all other chain vertices of degree 2. We regard a single disconnected degree-$0$ vertex as a degenerate chain. 
Virasoro graphs are in 1-1 correspondence with the set of partial permutations on $n$ objects, i.e.  injective partial mappings from
$\{ z_1,\ldots,z_n\}$ to itself \cite{HT}. Thus the number of Virasoro graphs of a given order $n$ is given by $\sum_{i=0}^{n}i!\binom{n}{i}^2$ e.g. for $n=1$ we obtain 2 graphs and for $n=2$ we obtain 7 graphs.

We define weights on the components of $g_n$ as follows. For each directed edge we define an edge weight
\begin{align}\label{eq:edge}
\calE(z_{i},z_{j})=
\left\{
  \begin{array}{ll}
    \frac{1}{6}s(z_i) &\mbox{ for } z_i=z_j,\\
    \omega(z_i,z_j) &\mbox{ for } z_i\neq z_j.
  \end{array}\right.
\end{align}
We note $\calE(z_{i},z_{j})=\calE(z_{j},z_{i})$. 
Suppose $g_{n}$ contains $M$ disconnected chains $\calC_m$ with initial vertex $x_{m}$ and final vertex $y_{m}$ where $m=1,\ldots,M$ 
\begin{align*}
\xy
(0,0)*[o]=<0.4pt>+{\cir<3pt>{}}="a"*+!R{x_m\,};
(10,0)*[o]=<0.4pt>+{\cir<0pt>{}}="b"*+!D{}
;
(15,0)*[o]=<6pt>+{\cdots\cdots}="e";
(20,0)*[o]=<0.4pt>+{\cir<0pt>{}}="c"*+!D{};
(30,0)*[o]=<0.4pt>+{\cir<3pt>{}}="d"*+!L{\,y_{m}};
\ar "b";"a"; \ar "c";"d"
\endxy.
\end{align*}
Choose  $\tauK:=\ldots,\tau_{ab},\ldots$ for $(a,b)\in\K$ as local coordinates on $\Mg$ as in Lemma~\ref{lem:nablaMperiod}. 
Define a weight associated with the $M$ chains given by a differential operator on $\Mg$ expressed in terms of $\partial_{ab}:=\partial_{\tau_{ab}}$ for $(a,b)\in\K$ as follows
\begin{align}
\Delta_{M}(\bm{x|y})
:=\sum_{\bm{(a,b)}}
\bm{\nu_{ab}(\bm{x,y})\del_{ab}},
 \label{eq:WAggChain}
\end{align}
where the $\bm{(a,b)}$ sum is over all $(a_{1},b_{1}),\ldots,(a_{M},b_{M})\in\K$ and 
with $\nu_{ab}(x,y):=\nu_{a}(x)\nu_{b}(y)$
\begin{align*}
&\bm{\nu_{ab}(\bm{x,y})}:=\nu_{a_{1}b_{1}}(x_{1},y_{1})\ldots \nu_{a_{M}b_{M}}(x_{M},y_{M}),\quad \bm{\del_{ab}}:=\del_{a_{1}b_{1}}\ldots\del_{a_M b_M}.
\end{align*}
The degenerate chain $\degenchain{z}{L}$ has weight $\Delta_{1}(z|z)=\nabla_{\Mg}(z)$ from Lemma~\ref{lem:nablaMperiod}.

We now define the total weight a Virasoro graph $g_n$ containing $L$ cycles and $M$ chains, with endpoints $x_m$, $y_m$ for $m=1,\ldots,M$, by the following differential operator:
\begin{align}
\calD_{g_n}(\bm{z}) := \left(\frac{c}{2}\right)^L \prod_{\mathrm{edges\;}(i,j)}\calE(z_{i},z_{j})\Delta_{M}(\bm{x|y}),\label{gn_weightofgraph}
\end{align}
where the product ranges over all the edges of $g_n$.  $\calD_{g_n}(\bm{z})$ depends on the central charge $c$, the classical differentials $\omega(z_i,z_j)$, $s(z_i)$ and $\nu_a(z_i)$ and $\del_{ab}$ for $(a,b)\in\K$. Summing over the weights of inequivalent order $n$ Virasoro graphs $g_n$, we define the differential operator 
\begin{align}
	\calD_n(\bm{z}) := \sum_{g_n}\calD_{g_n}(\bm{z}),\quad n\ge 1.
	\label{eq:main}
\end{align}
It is useful to also define $\calD_{0}=1$ and $\calD_{-1}=0$.
$\calD_n(\bm{z}) $ is symmetric under permutations of $\bm{z}$ variables since the set of partial permutations is so symmetric. 
It is instructive to consider the first two examples. 
For $n=1$ there are two Virasoro graphs with weights $W\hspace{-1mm}\left(\xy(0,0)*[o]=<0.4pt>+{\cir<3pt>{}}="a"*+!R{z_{1}\,};\endxy\right)=\Delta_1(z_{1}|z_{1})=\nabla_{\Mg}(z_{1})$ and $W(\xy(0,0)*[o]=<0.4pt>+{\cir<3pt>{}}="a"*+!R{z_{1}\,};\ar@(dr,ur) "a";"a";\endxy)=\frac{c}{12}s(z_{1})$ (where $W(g)$ denotes the weight of the displayed graph $g$). Hence
\begin{align}\label{eq:calD1}
	\calD_1(z_{1})=\nabla_{\Mg}(z_{1})+\frac{c}{12}s(z_{1}).
\end{align}
$\calD_{2}(z_{1},z_{2})$ is the sum of 7 Virasoro graph weights. These satisfy the following identities:
\begin{align*}
&W(\degenchain{z_1}{R}\quad\degenchain{z_2}{L})+W(\edge{z_1}{z_2})+W(\edge{z_2}{z_1})
=\nabla_{\Mg,z_1}^{(2)}(z_2)\nabla_{\Mg}(z_{1}),
\\
&W(\twocycle{z_1}{z_2})=\frac{c}{2}\omega(z_1,z_2)^2
=\frac{c}{12}\nabla_{\Mg,z_1}^{(2)}(z_2)s(z_1)+\frac{c}{2}\omega_2(z_1,z_2),
\\
&W\Big(\sloop{z_1}{R}\;\;\degenchain{z_2}{L}\Big)+W\Big(\degenchain{z_1}{R}\;\;\sloop{z_2}{L}\Big)+W\Big(\sloop{z_1}{R}\sloop{z_2}{L}\Big)
\\
&=\frac{c}{12}s(z_1)\nabla_{\Mg}(z_2)+\frac{c}{12}s(z_2)\nabla_{\Mg}(z_1)+\frac{c^2}{144}s(z_1)s(z_2),
\end{align*}
using Lemma~\ref{thm:NablaDelta} and \eqref{eq:nab_som}. Summing  we find
\begin{align}\label{eq:calD2}
\calD_2(z_1,z_2)=\left(\nabla_{\Mg,z_1}^{(2)}(z_2)+\frac{c}{12}s(z_2)\right)
\calD_{1}(z_1)+\frac{c}{2}\omega_2(z_1,z_2)\calD_{0}.
\end{align}
Notice that the order $2$ Virasoro graphs result from adjoining the $z_{2}$ vertex to the order one graphs $\degenchain{z_1}{L}$ and $\xy(0,0)*[o]=<0.4pt>+{\cir<3pt>{}}="a"*+!R{z_{1}\,};\ar@(dr,ur) "a";"a";\endxy$ in all possible ways. In general, all order $n+1$  graphs are of four Types  determined by how vertex $z_{n+1}$ is adjoined to an order $n$ graph $g_{n}$ as follows:
\begin{enumerate}
	\item [(I)]  Vertex $z_{n+1}$ is of degree zero	disconnected from $g_{n}$:  $\xy(0,0)*[o]=<0.4pt>+{\cir<3pt>{}}="a"*+!L{\,z_{n+1}};\endxy$.
	\item [(II)] Vertex $z_{n+1}$ is of degree two
	disconnected from $g_{n}$:  
	$\xy(0,0)*[o]=<0.4pt>+{\cir<3pt>{}}="a"*+!R{z_{n+1}\,};\ar@(dr,ur) "a";"a";\endxy$.  
	\item [(III)] Vertex $z_{n+1}$ is inserted into a $g_{n}$ edge 
	$\xy(0,0)*[o]=<0.4pt>+{\cir<3pt>{}}="a"*+!R{\ldots z_i\,}; (10,0)*[o]=<0.4pt>+{\cir<3pt>{}}="b"*+!L{\,z_j\ldots }; \ar "b";"a";\endxy$ giving
	\[\xy(0,0)*{\cir<3pt>{}}="a"*+!R{\ldots z_i\,}; (10,0)*{\cir<3pt>{}}="b"*+!D{\,z_{n+1}}; (20,0)*{\cir<3pt>{}}="c"*+!L{\,z_j\ldots }; 
	\ar "b";"a"; \ar "c";"b";\endxy.\]
	\item [(IV)] Vertex $z_{n+1}$ is joined to the end vertices $z_{i},z_{j}$ of a $g_{n}$ chain giving two chains:
	\begin{align*}
		\xy
		(0,0)*[o]=<0.4pt>+{\cir<3pt>{}}="a"*+!R{z_{n+1}};
		(10,0)*[o]=<0.4pt>+{\cir<3pt>{}}="b"*+!D{z_{i}\,};
		(20,0)*[o]=<0pt>+{}="f";
		(25,0)*[o]=<6pt>+{\cdots\cdots}="e";
		(30,0)*[o]=<0.4pt>+{\cir<0pt>{}}="c"*+!D{};
		(40,0)*[o]=<0.4pt>+{\cir<3pt>{}}="d"*+!L{\,z_{j}};
		\ar "b";"a"; \ar "f";"b";\ar "d";"c" ;
		\endxy
		\mbox{ and }
		\xy
		(0,0)*[o]=<0.4pt>+{\cir<3pt>{}}="a"*+!R{z_{i}\,};
		(10,0)*[o]=<0.4pt>+{\cir<0pt>{}}="b"*+!D{}
		;
		(15,0)*[o]=<6pt>+{\cdots\cdots}="e";
		(20,0)*[o]=<0pt>+{}="f";
		(30,0)*[o]=<0.4pt>+{\cir<3pt>{}}="c"*+!D{z_{j}};
		(40,0)*[o]=<0.4pt>+{\cir<3pt>{}}="d"*+!L{\,z_{n+1}};
		\ar "b";"a"; \ar "d";"c";\ar "f";"c"
		\endxy.
	\end{align*}
\end{enumerate}
Eqns. \eqref{eq:calD1}and \eqref{eq:calD1} are examples of an important iterative formula.
\begin{theorem}\label{theor:DnIter}
$\calD_{n}(\bm{z})$ obeys the following recursive identity for  all $n\ge 0$:
	\begin{align}\label{eq:Dnrec}
		\notag
		\calD_{n+1}(\bm{z},z_{n+1})&= \left(
		\nabla_{\Mg,\bm{z}}^{(\bm{2})}(z_{n+1})
		+\frac{c}{12}s(z_{n+1})\right)\calD_{n}(\bm{z})
		\\
		&+
		\frac{c}{2}\sum_{k=1}^{n}\omega_2(z_{k},z_{n+1})\calD_{n-1}(\ldots,\widehat{z_k},\ldots),
	\end{align}
for $n$-tuple $\bm{2}=2,\ldots,2$ and where the caret denotes omission of the $z_{k}$ entry and $\omega_{2}(x,y)$ is the symmetric meromorphic $(2,2)$-form of \eqref{eq:omegaN}.
\end{theorem}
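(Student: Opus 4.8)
The plan is to prove the identity \eqref{eq:Dnrec} by classifying all order-$(n+1)$ Virasoro graphs according to Types (I)--(IV) above, which, by the partial-permutation description of \cite{HT}, constitute a partition of the set of such graphs: deleting the vertex $z_{n+1}$ from any $g_{n+1}$ (reconnecting its two neighbours when $z_{n+1}$ has degree two) yields a unique order-$n$ graph $g_n$, and the local structure at $z_{n+1}$ (degree $0$; a self-loop; degree $2$ non-loop; degree $1$) records exactly which of the four operations produced $g_{n+1}$. It therefore suffices to sum the weights $\calD_{g_{n+1}}(\bm{z},z_{n+1})$ over each type and match the total against the right-hand side of \eqref{eq:Dnrec}. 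I read the identity as acting on an arbitrary differentiable $F$ on $\Mg$, so that $\calD_{n}(\bm{z})F$ is a meromorphic form of weight $(\bm{2})$ in $\bm{z}$ --- each vertex contributes total weight $2$, split as $1+1$ across its incident factors (an interior or cycle vertex through its two edges, a chain end through one edge and one $\nu$-factor, a self-loop or degenerate chain through a single weight-$2$ factor $\tfrac16 s$ or $\nu\nu$) --- to which $\nabla_{\Mg,\bm{z}}^{(\bm{2})}(z_{n+1})$ may legitimately be applied.

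First I would expand $\nabla_{\Mg,\bm{z}}^{(\bm{2})}(z_{n+1})\calD_{n}(\bm{z})F$ using the weight-adjusted Leibniz rule of Remark~\ref{rem:delyx}, so that the operator strikes each factor of a graph weight in turn with the correctly weighted version of itself. The differential equations of Section~2 then identify the outcomes geometrically: striking an ordinary edge $\omega(z_{i},z_{j})$ gives $\omega(z_{n+1},z_{i})\omega(z_{n+1},z_{j})$ by \eqref{eq:nab_ombidiff}, i.e. the insertion $z_{i}\to z_{n+1}\to z_{j}$ of Type (III); striking a chain-end factor $\nu_{a}(z_{i})$ gives $\omega(z_{n+1},z_{i})\nu_{a}(z_{n+1})$ by \eqref{eq:nab_nu}, i.e. the chain extension of Type (IV); and striking the differentiated function $\bm{\partial_{ab}}F$ (which carries no $z$-weight, so only the $\nabla_{\Mg}(z_{n+1})$ part of the operator acts) appends a new factor $\sum_{(a,b)\in\K}\nu_{a}(z_{n+1})\nu_{b}(z_{n+1})\partial_{ab}=\nabla_{\Mg}(z_{n+1})$, i.e. the degenerate chain of Type (I). Summed over all $g_{n}$, these reproduce exactly the Type (I), (IV) graphs and the Type (III) graphs obtained by splitting an ordinary edge.

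The delicate point is the self-loop. Striking the self-loop edge $\tfrac16 s(z_{k})$ produces, via \eqref{eq:nab_som}, the combination $\omega(z_{n+1},z_{k})^{2}-\omega_{2}(z_{n+1},z_{k})$; restoring the cycle factor $\tfrac c2$ yields $\tfrac c2\omega(z_{n+1},z_{k})^{2}-\tfrac c2\omega_{2}(z_{n+1},z_{k})$. The first term is precisely the weight of the $2$-cycle $z_{k}\rightleftarrows z_{n+1}$ that Type (III) produces from a self-loop edge, while the second term is spurious. Summing over self-loops and over the remaining graph on $\{z_{1},\ldots,z_{n}\}\setminus\{z_{k}\}$, the spurious contribution totals $-\tfrac c2\sum_{k}\omega_{2}(z_{k},z_{n+1})\calD_{n-1}(\ldots,\widehat{z_{k}},\ldots)$, which is cancelled exactly by the final term of \eqref{eq:Dnrec}, thereby completing all $2$-cycle weights. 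Finally the middle term $\tfrac{c}{12}s(z_{n+1})\calD_{n}(\bm{z})$ adjoins a disconnected self-loop at $z_{n+1}$, supplying precisely the Type (II) graphs. Collecting the four types gives $\sum_{g_{n+1}}\calD_{g_{n+1}}=\calD_{n+1}(\bm{z},z_{n+1})$, which is the claim; the base case $n=0$ returns \eqref{eq:calD1}.

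The main obstacle I anticipate is the bookkeeping at the self-loop/$2$-cycle interface together with the verification that the weight-adjusted Leibniz action of $\nabla_{\Mg,\bm{z}}^{(\bm{2})}(z_{n+1})$ on the chain operator $\Delta_{M}$ genuinely reproduces the Type (I) and (IV) graphs in every orientation, including the two ways of extending a degenerate chain, where both $\nu$-factors at the coincident endpoint are struck. This reduces to checking that the splitting of the weight $2$ at each vertex is compatible with how the operator distributes across the $\nu$-factors and the moduli derivatives, which is exactly where \eqref{eq:nab_nu}, \eqref{eq:nab_som} and the expression for $\nabla_{\Mg}$ from Lemma~\ref{lem:nablaMperiod} must be combined; the remaining matching of combinatorial types to analytic terms is routine.
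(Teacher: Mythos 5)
Your proposal is correct and follows essentially the same route as the paper's own proof: the same four-type classification of order-$(n+1)$ graphs, the Leibniz expansion of $\nabla_{\Mg,\bm{z}}^{(\bm{2})}(z_{n+1})$ against the factors of each graph weight, the identifications via \eqref{eq:nab_Om}--\eqref{eq:nab_som}, and the cancellation of the spurious $-\tfrac{c}{2}\omega_{2}(z_{k},z_{n+1})$ terms from self-loops against the final sum in \eqref{eq:Dnrec}. The only difference is presentational: the paper packages the chain/degenerate-chain bookkeeping into Lemma~\ref{thm:NablaDelta} on $\Delta_{M}$, which you instead carry out inline.
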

To prove Theorem~\ref{theor:DnIter} we need the following lemma concerning derivatives of $\Delta_{M}$:
\begin{lemma}\label{thm:NablaDelta}
	\begin{align*}
		\nabla_{\Mg,\bm{x,y}}^{(\bm{1})}(z)\Delta_M(\bm{x}|\bm{y})
		&=\Delta_{M+1}(\bm{x},z|\bm{y},z) +\sum_{m=1}^{M}\calE(z,x_m)\Delta_M(\ldots,z,\ldots|\bm{y})
		\\
		&
		\quad+\sum_{m=1}^{M}\calE(y_m,z)\Delta_M(\bm{x}|\ldots,z,\ldots)
		,
	\end{align*}
	for $2M$-tuple\footnote{
		By Remark~\ref{rem:delyx}, there is no inconsistency for any degenerate chain with $x_{m}=y_{m}$.}	
	$\bm{1}=1,\ldots,1$ and where in the summands, the vertex label $x_{m}$ (respectively $y_{m}$) is replaced by $z$ in $\Delta_M(\bm{x}|\bm{y})$.
\end{lemma}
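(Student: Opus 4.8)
The plan is to unravel both sides in terms of the period-coordinate operators $\partial_{ab}$ and apply the product rule from Remark~\ref{rem:delyx} together with the key differential equations \eqref{eq:nab_nu} and \eqref{eq:nab_ombidiff}. Recall from \eqref{eq:WAggChain} that $\Delta_M(\bm{x}|\bm{y})=\sum_{\bm{(a,b)}}\bm{\nu_{ab}(\bm{x,y})}\,\bm{\del_{ab}}$, where the product $\bm{\nu_{ab}(\bm{x,y})}=\prod_{m=1}^{M}\nu_{a_m}(x_m)\nu_{b_m}(y_m)$ is a weight-$(\bm{1},\bm{1})$ form in the $2M$ chain-endpoint variables and $\bm{\del_{ab}}=\prod_m\partial_{a_mb_m}$ is a pure differential operator in the period coordinates $\tauK$, hence annihilated by $\nabmy{}{}(z)$ in its action on forms. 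Thus applying the operator $\nabla_{\Mg,\bm{x,y}}^{(\bm{1})}(z)$ to $\Delta_M(\bm{x}|\bm{y})$ reduces, by the Leibniz rule of Remark~\ref{rem:delyx} extended to $2M$ variables, to differentiating only the form-factor $\bm{\nu_{ab}(\bm{x,y})}$ while carrying along the operator $\bm{\del_{ab}}$ untouched.

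Next I would expand the Leibniz rule over the $2M$ endpoint variables. Differentiating the product $\prod_m \nu_{a_m}(x_m)\nu_{b_m}(y_m)$ produces $2M$ terms, one for each factor. By \eqref{eq:nab_nu}, differentiating the factor $\nu_{a_m}(x_m)$ gives $\nabla_{\Mg,x_m}^{(1)}(z)\nu_{a_m}(x_m)=\omega(z,x_m)\nu_{a_m}(z)$, and similarly for $\nu_{b_m}(y_m)$ we obtain $\omega(z,y_m)\nu_{b_m}(z)$; I would use symmetry $\omega(z,x_m)=\omega(x_m,z)$ to match the edge weights $\calE(z,x_m)$ and $\calE(y_m,z)$ of \eqref{eq:edge}. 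The crucial observation is that replacing the factor $\nu_{a_m}(x_m)$ by $\omega(z,x_m)\nu_{a_m}(z)$, while retaining the operator $\partial_{a_mb_m}$ and summing over the index $a_m$, is exactly the statement that the endpoint label $x_m$ has been replaced by $z$ in $\Delta_M(\bm{x}|\bm{y})$, with the scalar edge-weight factor $\calE(z,x_m)$ pulled out front; this reproduces the two sums $\sum_m\calE(z,x_m)\Delta_M(\ldots,z,\ldots|\bm{y})$ and $\sum_m\calE(y_m,z)\Delta_M(\bm{x}|\ldots,z,\ldots)$.

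It remains to produce the leading term $\Delta_{M+1}(\bm{x},z|\bm{y},z)$, which comes from the $\nabla_{\Mg}(z)$ piece of the operator $\nabla_{\Mg,\bm{x,y}}^{(\bm{1})}(z)$ acting on the coefficient-operator structure rather than on the visible forms. Here I would use Lemma~\ref{lem:nablaMperiod}, which gives $\nabla_{\Mg}(z)=\sum_{(a,b)\in\K}\nu_a(z)\nu_b(z)\partial_{ab}$. Applying this to $\Delta_M(\bm{x}|\bm{y})=\sum_{\bm{(a,b)}}\bm{\nu_{ab}}\,\bm{\del_{ab}}$ and treating $\partial_{ab}$ as a derivation on the $\tauK$-dependence of $\bm{\del_{ab}}$ is delicate: the cleanest route is to regard $\Delta_M$ as a differential operator and to observe that $\nabla_{\Mg}(z)$ prepends a single extra factor $\nu_{a_{M+1}}(z)\nu_{b_{M+1}}(z)\partial_{a_{M+1}b_{M+1}}$, summed over $(a_{M+1},b_{M+1})\in\K$, which is precisely the $(M{+}1)$st degenerate-chain factor and assembles $\Delta_{M+1}(\bm{x},z|\bm{y},z)$ with the new endpoint pair both equal to $z$ (consistent by the degenerate-chain convention of the footnote and Remark~\ref{rem:delyx}). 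The main obstacle I anticipate is bookkeeping the interaction between $\nabla_{\Mg}(z)$ and the operator factors $\partial_{ab}$ carefully enough to confirm that this piece contributes only the single clean term $\Delta_{M+1}$ and does not generate spurious cross terms (e.g.\ from $\partial_{ab}$ hitting the period-dependence implicit in the holomorphic forms $\nu_c$); resolving this requires treating $\Delta_M$ as acting to the right on an arbitrary test function $F(\tauK)$ and verifying the identity at the level of $\nabla_{\Mg}(z)\bigl(\Delta_M F\bigr)$ via the chain rule, so that all genuine $\tauK$-derivative contributions are attributed to $F$ while the form-differentiations yield exactly the three displayed families of terms.
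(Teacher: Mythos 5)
Your proposal is correct and follows essentially the same route as the paper's proof: the identity is established by applying the Leibniz rule of Remark~\ref{rem:delyx} to $\Delta_M(\bm{x}|\bm{y})$ written as the coefficient forms $\bm{\nu_{ab}(\bm{x,y})}$ times the operators $\bm{\del_{ab}}$, where the $\nabla_{\Mg}(z)$ piece passing through to act on the right (your test-function formulation) produces $\Delta_{M+1}(\bm{x},z|\bm{y},z)$ via Lemma~\ref{lem:nablaMperiod}, and the action on the coefficient forms produces the two edge-weight sums via \eqref{eq:nab_nu}. The ``spurious cross terms'' you worry about do not arise precisely because \eqref{eq:nab_nu} already packages the moduli-derivative of each 1-form together with its local-coordinate derivative terms, which is exactly how the paper's short Leibniz computation handles it (note also that \eqref{eq:nab_ombidiff}, which you cite at the outset, is not actually needed).
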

\begin{proof}
	From Lemma~\ref {lem:nablaMperiod} and \eqref{eq:nab_nu} and on applying the Leibniz rule of Remark~\ref{rem:delyx} we find
	\begin{align*}
		\nabla_{\Mg,\bm{x},\bm{y}}^{(\bm{1})}(z)\Delta_M(\bm{x}|\bm{y})
		&=
		\sum_{\bm{(a,b)}}\sum_{(a_{M+1},b_{M+1})}
		\nu_{a_{1}b_{1}}(x_1,y_1)\ldots 
		\nu_{a_{M+1}b_{M+1}}(x_{M+1},y_{M+1})
		\bm{\del_{ab}}\del_{a_{M+1} b_{M+1}}
		\\
		&\quad +\sum_{\bm{(a,b)}}
		\nabla_{\Mg,\bm{x},\bm{y}}^{(\bm{1})}(z)
		\left(\nu_{a_{1}b_{1}}(x_1,y_1)\ldots \nu_{a_{M}b_{M}}(x_{M},y_{M})\right)\bm{\del_{ab}}
		\\
		&=\Delta_{M+1}(\bm{x},z|\bm{y},z)
		+\sum_{m=1}^{M}\sum_{\bm{(a,b)}}
		\left(\ldots\omega(z,x_m)\nu_{a_m b_{m}}(z,y_{m})\ldots\right)\bm{\del_{ab}}
		\\
		&\quad+\sum_{m=1}^{M}\sum_{\bm{(a,b)}}
		\left(\ldots\omega(y_m,z)\nu_{a_{m} b_m}(x_{m},z)\ldots\right)\bm{\del_{ab}}
		,
	\end{align*}
	which is equivalent to the stated result on using \eqref{eq:edge}.
\end{proof}
\begin{proof}[Proof of Theorem~\ref{theor:DnIter}]
We first observe that the $\nabla_{\Mg,\bm{z}}^{(\bm{2})}(z_{n+1})\calD_{n}(\bm{z})$ term in
\eqref{eq:Dnrec} contains a 
$\sum_{(a,b)\in\K}\nu_{a}(z_{n+1})\nu_{b}(z_{n+1})\calD_{n}(\bm{z})\partial_{ab}$
contribution which is the sum all Type~(I) graph weights.
Furthermore, the  $\frac{c}{12}s(z_{n+1})\calD_{n}(\bm{z})$ term is the sum all Type~(II) graph weights.
The remaining Type~(III) and (IV) graph weights in \eqref{eq:Dnrec} arise from the action of 
$\nabla_{\Mg,\bm{z}}^{(\bm{2})}(z_{n+1})$ on the $\bm{z}$ dependent parts of $\calD_{n}(\bm{z})$. 
Consider $g_{n+1}$ of Type~(III) where $g_{n}$ contains a 1-cycle  $\xy(0,0)*[o]=<0.4pt>+{\cir<3pt>{}}="a"*+!R{z_{k}\,};\ar@(dr,ur) "a";"a";\endxy$  together with some disconnected graph $g_{n-1}$ so that $W(g_{n})=\frac{c}{12}s(z_{k})W(g_{n-1})$. Using~\eqref{eq:nab_som} we find the RHS of \eqref{eq:Dnrec} gives rise to a contribution
\begin{align*}
	W(g_{n-1})\frac{c}{12}\nabla_{\Mg,z_{k}}^{(2)}(z_{n+1})s(z_{k})=\frac{c}{2}\omega(z_{k},z_{n+1})^{2}W(g_{n-1})-\frac{c}{2}\omega_{2}(z_{k},z_{n+1})W(g_{n-1}).
\end{align*}
The $-\frac{c}{2}\omega_{2}(z_{n+1},z_{k})W(g_{n-1})$ term is canceled by a summand term in  \eqref{eq:Dnrec}. Thus we obtain the weights of all Type~(III)  graphs of the form $\xy 
(0,0)*[o]=<0.4pt>+{\cir<3pt>{}}="a"*+!R{z_{n+1}\,}; (10,0)*[o]=<0.4pt>+{\cir<3pt>{}}="b"*+!L{\,z_{k}}; \ar@/^/ "b";"a";\ar@/^/ "a";"b";\endxy$ with disconnected  $g_{n-1}$.
The remaining Type~(III) terms come from any $g_{n}$ edge with vertices  $z_i\neq z_j$. We find such a contribution arises from  terms in the RHS of \eqref{eq:Dnrec} of the form
\begin{align*}
\nabla_{\Mg,z_i,z_j}^{(1,1)}(z_{n+1})\omega(z_i,z_j)=\omega(z_i,z_{n+1})\omega(z_{n+1},z_j)=
W_{\calE}(\xy(0,0)*{\cir<3pt>{}}="a"*+!R{z_i\,}; (10,0)*{\cir<3pt>{}}="b"*+!D{\,z_{n+1}}; (20,0)*{\cir<3pt>{}}="c"*+!L{\,z_j}; \ar "b";"a"; \ar "c";"b";\endxy),
\end{align*}
using \eqref{eq:nab_ombidiff} and where $W_{\calE}$ denotes the edge weights of the displayed graph.

Finally, we consider differential operators arising from the weights of disconnected chains in $g_{n}$. Thus if $g_{n}$ contains a single degenerate chain $\degenchain{z_i}{L}$, 
 Lemma~\ref{thm:NablaDelta} implies
\begin{align*}
&\nabla_{z_i}^{(2)}(z_{n+1})W(\,\degenchain{z_i}{L})
=\nabla_{z_i}^{(2)}
(z_{n+1})\Delta_{1}(z_i|z_i)
\\
&=\Delta_2(z_i,z_{n+1}|z_i,z_{n+1})
+
\calE(z_{n+1},z_i)\Delta(z_{n+1}|z_i)
+
\calE(z_i,z_{n+1})\Delta(z_i|z_{n+1})
\\
&=
W(\xy(0,0)*[o]=<0.4pt>+{\cir<3pt>{}}
="a"*+!R{z_i\,}; (10,0)*[o]=<0.4pt>+
{\cir<3pt>{}}="b"*+!L{\,z_{n+1}};\endxy)
+W(\edge{z_{n+1}}{z_i})
+W(\edge{z_i}{z_{n+1}})
,
\end{align*}
corresponding to adjoining $z_{n+1}$ with $\degenchain{z_i}{L}$.
In general, if $g_{n}$ contains $M$ chains with end vertices $x_{m},y_{m}$ for $m=1,\ldots ,M$, then  Lemma~\ref{thm:NablaDelta} implies
\begin{align*}
&\nabla_{\Mg,\bm{x},\bm{y}}^{(\bm{k})}(z_{n+1})
W_{\calC}\left(\substack{
\xy
(0,0)*[o]=<0.4pt>+{\cir<3pt>{}}="a"*+!R{x_1\,};
(10,0)*[o]=<0.4pt>+{\cir<0pt>{}}="b"*+!D{};
(15,0)*[o]=<6pt>+{\cdots\cdots}="e";
(20,0)*[o]=<0.4pt>+{\cir<0pt>{}}="c"*+!D{};
(30,0)*[o]=<0.4pt>+{\cir<3pt>{}}="d"*+!L{\,y_{1}};
\ar "b";"a"; \ar "c";"d"
\endxy
\\
\vdots\\
\xy
(0,0)*[o]=<0.4pt>+{\cir<3pt>{}}="a"*+!R{x_{M}\,};
(10,0)*[o]=<0.4pt>+{\cir<0pt>{}}="b"*+!D{};
(15,0)*[o]=<6pt>+{\cdots\cdots}="e";
(20,0)*[o]=<0.4pt>+{\cir<0pt>{}}="c"*+!D{};
(30,0)*[o]=<0.4pt>+{\cir<3pt>{}}="d"*+!L{\,y_{M}};
\ar "b";"a"; \ar "c";"d"
\endxy}\right)
=W_{\calC}\left(\substack{
\xy
(0,0)*[o]=<0.4pt>+{\cir<3pt>{}}="a"*+!R{x_1\,};
(10,0)*[o]=<0.4pt>+{\cir<0pt>{}}="b"*+!D{};
(15,0)*[o]=<6pt>+{\cdots\cdots}="e";
(20,0)*[o]=<0.4pt>+{\cir<0pt>{}}="c"*+!D{};
(30,0)*[o]=<0.4pt>+{\cir<3pt>{}}="d"*+!L{\,y_{1}};
\ar "b";"a"; \ar "c";"d"
\endxy
\\
\vdots\\
\xy
(0,0)*[o]=<0.4pt>+{\cir<3pt>{}}="a"*+!R{x_{M}\,};
(10,0)*[o]=<0.4pt>+{\cir<0pt>{}}="b"*+!D{};
(15,0)*[o]=<6pt>+{\cdots\cdots}="e";
(20,0)*[o]=<0.4pt>+{\cir<0pt>{}}="c"*+!D{};
(30,0)*[o]=<0.4pt>+{\cir<3pt>{}}="d"*+!L{\,y_{M}};
\ar "b";"a"; \ar "c";"d"
\endxy\\
\degenchain{z_{n+1}}{L}}\right)
\\
&\quad\quad +\sum_{m=1}^{M}W_{\calE}(\edge{y_m}{z_{n+1}})
W_{\calC}(
\xy
(0,0)*[o]=<0.4pt>+{\cir<3pt>{}}="a"*+!R{x_{m}\,};
(10,0)*[o]=<0.4pt>+{\cir<0pt>{}}="b"*+!D{};
(15,0)*[o]=<6pt>+{\cdots\cdots}="e";
(20,0)*[o]=<0.4pt>+{\cir<0pt>{}}="c"*+!D{};
(22.5,0)*[o]=<0.4pt>+{\cir<3pt>{}}="c"*+!D{y_m};
(32.5,0)*[o]=<0.4pt>+{\cir<3pt>{}}="d"*+!L{\,z_{n+1}};
\ar "b";"a"; \ar "c";"d"
\endxy)
\\
&
\quad\quad+\sum_{m=1}^{M}
W_{\calE}(\edge{z_{n+1}}{x_m})
W_{\calC}(\xy
(0,0)*[o]=<0.4pt>+{\cir<3pt>{}}="d"*+!R{\,z_{n+1}};
(10,0)*[o]=<0.4pt>+{\cir<3pt>{}}="a"*+!D{x_{m}\,};
(12.5,0)*[o]=<0.4pt>+{\cir<0pt>{}}="b"*+!D{};
(17.5,0)*[o]=<6pt>+{\cdots\cdots}="e";
(22.5,0)*[o]=<0.4pt>+{\cir<0pt>{}}="c"*+!D{};
(32.5,0)*[o]=<0.4pt>+{\cir<3pt>{}}="f"*+!L{\,y_m};
\ar "a";"d"; \ar "c";"f"
\endxy)
\end{align*}
where $W_{\calC}$ denotes the chain weight contribution.
Note that the first term is of Type~(I) as already discussed whereas the additional $W_{\calE}$ terms are all of Type~(IV) found by adjoining $z_{n+1}$ to chain endpoint  vertices  $x_{m}$ or $y_{m}$. 
Thus, altogether, we confirm that \eqref{eq:Dnrec} holds since all order $n+1$ graph weights occur in the RHS of \eqref{eq:Dnrec}.
\end{proof}

We define normalised partition and Virasoro generating $n$-point  functions by
\begin{align*}\Theta_V:= Z_V Z_{M}^{-c},\quad 
	G_{n}(\bm{z}):=\calF_V(\bm{\omega,z})Z_{M}^{-c},
\end{align*}
where $Z_{M}$ is the rank one genus $g$ Heisenberg VOA partition function and $c$ is the central charge for the given VOA $V$. 
The normalising factor will allow us to exploit the differential equations~\eqref{eq:nab_Om}--\eqref{eq:nab_som} and the Ward identity Proposition~\ref{prop:Ward} in Theorem~\ref{theor:DnIter} below.
From Proposition~\ref{prop:ZVqexp} we know that the expansion of $\Theta_{V}$ to any finite order in $q_{1},\ldots,q_{g}$ is convergent on $\Schg$. 
Thus we may use  Lemmas~\ref{lem:nablaM} and \ref{lem:nablayM} to replace the action on $\Theta_{V}$ of $\nabla_{\bm{y}}^{(\bm{m})}(x)$ by 
$\nabla_{\Mg,\bm{y}}^{(\bm{m})}(x)$ throughout. We again choose  $\tauK:=\ldots,\tau_{ab},\ldots$ for $(a,b)\in\K$ as local coordinates on $\Mg$.
\begin{theorem}\label{theor:main2}
	The order $n$ genus $g$ Virasoro generating function is given by \begin{align}\label{eq:GnEqn}
		G_n(\bm{z})=\calD_n(\bm{z})\Theta_{V}.
	\end{align}
\end{theorem}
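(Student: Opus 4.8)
The plan is to establish \eqref{eq:GnEqn} by induction on $n$, driven by the recursive Ward identity of Proposition~\ref{prop:Ward} and matched against the operator recursion of Theorem~\ref{theor:DnIter}. The base case $n=0$ is immediate: since $\calD_{0}=1$ we have $\calD_{0}\Theta_{V}=\Theta_{V}=Z_{V}Z_{M}^{-c}=G_{0}$. It is worth recording the $n=1$ case as a template for the normalisation bookkeeping: from $\calF_{V}(\omega,z_{1})=\nabla(z_{1})Z_{V}$ in \eqref{eq:Fom} together with the Heisenberg equation \eqref{eq:nablaZM} in the form $\nabla(z_{1})Z_{M}^{-c}=-\tfrac{c}{12}s(z_{1})Z_{M}^{-c}$, the Leibniz rule gives $\nabla(z_{1})\Theta_{V}=G_{1}(z_{1})-\tfrac{c}{12}s(z_{1})\Theta_{V}$, whence $G_{1}=\big(\nabla_{\Mg}(z_{1})+\tfrac{c}{12}s(z_{1})\big)\Theta_{V}=\calD_{1}(z_{1})\Theta_{V}$ after replacing $\nabla$ by $\nabla_{\Mg}$ via Lemma~\ref{lem:nablaM}, in agreement with \eqref{eq:calD1}. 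For the inductive step I would first use the symmetry of $\calF_{V}(\bm{\omega,z})$ from Proposition~\ref{prop_Ggen} to place the $(n+1)$-st Virasoro insertion at $x=z_{n+1}$, so that Proposition~\ref{prop:Ward} reads
\begin{align*}
\calF_{V}(\bm{\omega,z},\omega,z_{n+1})=\nabla_{\bm{z}}^{(\bm{2})}(z_{n+1})\calF_{V}(\bm{\omega,z})+\frac{c}{2}\sum_{k=1}^{n}\omega_{2}(z_{n+1},z_{k})\calF_{V}(\ldots;\widehat{\omega,z_{k}};\ldots).
\end{align*}

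Multiplying through by $Z_{M}^{-c}$ converts the final sum directly into $\tfrac{c}{2}\sum_{k}\omega_{2}(z_{n+1},z_{k})G_{n-1}(\ldots,\widehat{z_{k}},\ldots)$, and the heart of the computation is to extract the normalising factor from the first term. Writing $\nabla_{\bm{z}}^{(\bm{2})}(z_{n+1})=\nabla(z_{n+1})+D_{\bm{z}}(z_{n+1})$, where $D_{\bm{z}}$ collects the $z_{k}$-derivative and multiplication terms of \eqref{eq:nabla_xym}, one observes that $D_{\bm{z}}$ touches only the insertion points and so commutes with multiplication by the $\bm{z}$-independent factor $Z_{M}^{-c}$, whereas $\nabla(z_{n+1})$ is a moduli derivation obeying Leibniz. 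Using \eqref{eq:nablaZM} as above I would obtain
\begin{align*}
\big(\nabla_{\bm{z}}^{(\bm{2})}(z_{n+1})\calF_{V}(\bm{\omega,z})\big)Z_{M}^{-c}=\nabla_{\bm{z}}^{(\bm{2})}(z_{n+1})G_{n}(\bm{z})+\frac{c}{12}s(z_{n+1})G_{n}(\bm{z}),
\end{align*}
and then, replacing $\nabla_{\bm{z}}^{(\bm{2})}$ by $\nabla_{\Mg,\bm{z}}^{(\bm{2})}$ by Lemma~\ref{lem:nablayM} (legitimate because, by Proposition~\ref{prop:ZVqexp}, $\Theta_{V}$ and hence $G_{n}$ has moduli-function coefficients on $\Schg$), I arrive at
\begin{align*}
G_{n+1}(\bm{z},z_{n+1})=\Big(\nabla_{\Mg,\bm{z}}^{(\bm{2})}(z_{n+1})+\frac{c}{12}s(z_{n+1})\Big)G_{n}(\bm{z})+\frac{c}{2}\sum_{k=1}^{n}\omega_{2}(z_{n+1},z_{k})G_{n-1}(\ldots,\widehat{z_{k}},\ldots).
\end{align*}

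Substituting the inductive hypotheses $G_{n}=\calD_{n}(\bm{z})\Theta_{V}$ and $G_{n-1}(\ldots,\widehat{z_{k}},\ldots)=\calD_{n-1}(\ldots,\widehat{z_{k}},\ldots)\Theta_{V}$, this is precisely the right-hand side of the operator recursion \eqref{eq:Dnrec} applied to $\Theta_{V}$, so Theorem~\ref{theor:DnIter} yields $G_{n+1}=\calD_{n+1}(\bm{z},z_{n+1})\Theta_{V}$. The step I expect to be the main obstacle is this last identification: one must verify that $\nabla_{\Mg,\bm{z}}^{(\bm{2})}(z_{n+1})\big(\calD_{n}(\bm{z})\Theta_{V}\big)$ equals $\big(\nabla_{\Mg,\bm{z}}^{(\bm{2})}(z_{n+1})\calD_{n}(\bm{z})\big)\Theta_{V}$ with the latter read as in Theorem~\ref{theor:DnIter}. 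This is delicate because $\calD_{n}$ is itself a differential operator in the period-matrix derivatives $\partial_{ab}$: when $\nabla_{\Mg}(z_{n+1})=\sum_{(a,b)\in\K}\nu_{a}(z_{n+1})\nu_{b}(z_{n+1})\partial_{ab}$ acts on the form $\calD_{n}(\bm{z})\Theta_{V}$, the Leibniz rule splits each $\partial_{ab}$ into a term differentiating the $\bm{z}$-dependent coefficient differentials of $\calD_{n}$ (producing the edge weights $\omega$ and $s$ via \eqref{eq:nab_nu}--\eqref{eq:nab_som}) and a term appending a new $\partial_{ab}$ acting on $\Theta_{V}$ (producing a new chain). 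This is exactly the content of Lemma~\ref{thm:NablaDelta}, and since the $\partial_{ab}$ commute the two readings agree; the $z_{k}$-derivative terms in $\nabla_{\Mg,\bm{z}}^{(\bm{2})}$ act only on the coefficients of $\calD_{n}$ because $\Theta_{V}$ is $\bm{z}$-independent. Confirming this compatibility closes the induction.
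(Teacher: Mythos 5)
Your proof is correct and follows essentially the same route as the paper: induction on $n$ using the Ward identity of Proposition~\ref{prop:Ward} at $x=z_{n+1}$, the Heisenberg equation \eqref{eq:nablaZM} to absorb the normalising factor $Z_{M}^{-c}$, Lemmas~\ref{lem:nablaM} and \ref{lem:nablayM} (justified via Proposition~\ref{prop:ZVqexp}) to pass to the $\nabla_{\Mg}$-operators, and the operator recursion of Theorem~\ref{theor:DnIter} to close the induction. The compatibility issue you flag at the end is not an extra obstacle, since \eqref{eq:Dnrec} is an identity of composed differential operators (with Lemma~\ref{thm:NablaDelta} already built into its proof), so it applies directly to $\Theta_{V}$ exactly as the paper uses it.
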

\begin{proof}
We prove the result by induction in $n$. Eqn.~\eqref{eq:GnEqn} is trivially true for $n=0$ since $\calD_{0}=1$. For $n=1$ we know from \eqref{eq:Fom} and \eqref{eq:nablaZM} that
\[
G_{1}(z_{1})=Z_{M}^{-c}\,\nabla_{\Mg}(z_{1})\left(Z_{M}^{c}\Theta_{V}\right)=\calD_{1}(z_{1})\Theta_{V},
\]
recalling \eqref{eq:calD1}.
The Virasoro Ward identity \eqref{eq:VirWard} implies 
\begin{align*}
G_{n+1}(\bm{z},z_{n+1})
	&=Z_{M}^{-c}\,\nabla_{\Mg,\bm{z}}^{(\bm{2})}(z_{n+1})\left(G_{n}(\bm{z})Z_{M}^{c}\right)
	+\frac{c}{2}\sum_{k=1}^{n}\omega_{2}(z_{n+1},z_{k})G_{n-1}(\ldots;\widehat{z_{k}};\ldots)
	\\
	&=\left(
	\nabla_{\Mg,\bm{z}}^{(\bm{2})}(z_{n+1})
	+\frac{c}{12}s(z_{n+1})\right)G_{n}(\bm{z})
	+\frac{c}{2}\sum_{k=1}^{n}\omega_{2}(z_{k},z_{n+1})G_{n-1}(\ldots;\widehat{z_{k}};\ldots),
\end{align*}
using \eqref{eq:nablaZM} again. By induction, we assume that $G_n(\bm{z})=\calD_n(\bm{z})\Theta_{V}$ and $G_{n-1}(\bm{z})=\calD_{n-1}(\bm{z})\Theta_{V}$.
Then we find $G_{n+1}(\bm{z},z_{n+1})=\calD_{n+1}(\bm{z},z_{n+1})\Theta_{V}$ using Theorem~\ref{theor:DnIter}.
\end{proof}
\begin{remark}\label{rem:V modules}
	In \cite{TW1} we discuss genus $g$ partition and correlation functions for given  $V$-modules
	(subject to some mild fusion rule assumptions)  where the $g$ $V$-basis sums of \eqref{eq:Zg} are generalised to $V$-module basis sums together with suitable intertwiner vertex operator insertions. In particular, there is a natural generalisation of the Ward identity Proposition~\ref{prop:Ward} and the Virasoro generating function formula of Theorem~\ref{theor:DnIter} where $\Theta_{V}$ is replaced by the normalised partition function for the given $V$-modules.
\end{remark}

\section{Modular transformations}\label{sect:an_mod}
\subsection{Modular transformations of classical differentials}
Consider the change of homology basis from the original marking $(\bm{\alpha},{\bm{\beta}})$ to  
$(\widetilde{\bm{\alpha}},\widetilde{\bm{\beta}})$  where e.g. \cite{Mu1,Fa}
\begin{align}
	\begin{bmatrix}
		\widetilde{\bm{\beta}}\\
		\widetilde{\bm{\alpha}}
	\end{bmatrix}
	=
	\begin{bmatrix}
		A & B\\
		C & D 
	\end{bmatrix}
	\begin{bmatrix}
		{\bm{\beta}}\\
		{\bm{\alpha}}
	\end{bmatrix},
	\label{eq:hommod}
\end{align}
for a symplectic  modular matrix $\left[
\begin{smallmatrix}
	A & B\\
	C & D 
\end{smallmatrix}
\right]\in\Sp(2g,\Z)$ i.e. 
\begin{align}\label{eq:modinv}
	\begin{bmatrix}
		A & B\\
		C & D 
	\end{bmatrix}^{-1} =
	\begin{bmatrix}
		D^{T} & -B^{T}\\
		-C^{T} & A^{T}
	\end{bmatrix},
\end{align}
and $A,B,C,D$ obey the relations:
\begin{align}
	&AD^{T}-BC^{T}=A^{T}D-C^{T}B=\Ig,
	\label{eq:AD-BC}
	\\
	&AB^{T},\, A^{T}C,\,DC^{T},\, D^{T}B \mbox{ are symmetric},
	\label{eq:ABT_sym}
\end{align}
for identity matrix $\Ig$. 

Let $\nu (x):=\left[\nu_{1}(x),\ldots ,\nu_{g}(x)\right] $ be the row vector of $1$-forms \eqref{eq:nu}.  It is convenient to define 
\begin{align}\label{eq:calMNdef}
	\calM :=C\Omega+D,\quad \calN :=\calM ^{-1}.
\end{align}
Using $\oint\limits_{\alpha_{a}}\nu_{b}=\tpi\, \delta_{ab}$ then \eqref{eq:hommod} implies  
$\nu (x)$ transforms as
\begin{align}
	\widetilde{\nu }(x)=\nu(x)\calN,
	\label{eq:numod}
\end{align}
This implies the period matrix \eqref{eq:period} transforms as
\begin{align}
	\Omt=(A\Omega +B)\calN .
	\label{eq:Omegamod}
\end{align}
\begin{lemma}\label{lem:oms_mod}
	$\omega(x,y)$ and $s(x)$ transform under $\Sp(2g,\Z)$ as follows:
	\begin{align}
		\widetilde{\omega}(x,y)&=\omega(x,y)
		-\frac{1}{\tpi}\nu (x)\calN C\nu ^T(y),
		\label{eq:ommod}
		\\
		\widetilde{s}(x)&=s(x)
		-\frac{3}{\pi \im}\nu (x)\calN C\nu ^T(x),
		\label{eq:smod}
	\end{align}
where $\calN C$ is symmetric.
\end{lemma}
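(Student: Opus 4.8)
The plan is to obtain $\widetilde{\omega}(x,y)$ as a controlled modification of $\omega(x,y)$ and then read off $\widetilde{s}(x)$ by taking the coincidence limit in \eqref{eq:projcon}. The key structural observation is that the principal part $\tfrac{dxdy}{(x-y)^{2}}$ in \eqref{eq:omega} is local and hence marking-independent, so $\widetilde{\omega}$ is again the (unique) bidifferential of the second kind with the same singular behaviour, now normalised against the new $\widetilde{\bm{\alpha}}$-cycles. Therefore the difference $\widetilde{\omega}(x,y)-\omega(x,y)$ is a global holomorphic bidifferential, and since $\{\nu_{a}\}$ is a basis of $\calH_{1}$ by \eqref{eq:nu} I would write
\[
\widetilde{\omega}(x,y)=\omega(x,y)+\nu(x)\,\mathbf{c}\,\nu^{T}(y),
\]
for a constant $g\times g$ matrix $\mathbf{c}$; the symmetry of $\widetilde{\omega}$ forces $\mathbf{c}=\mathbf{c}^{T}$.

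Next I would fix $\mathbf{c}$ by imposing the new normalisation $\oint_{\widetilde{\alpha}_{a}}\widetilde{\omega}(x,\cdot)=0$. Reading the bottom row of \eqref{eq:hommod}, $\widetilde{\bm{\alpha}}=C\bm{\beta}+D\bm{\alpha}$, so $\oint_{\widetilde{\alpha}_{a}}=\sum_{b}C_{ab}\oint_{\beta_{b}}+\sum_{b}D_{ab}\oint_{\alpha_{b}}$. Applying this to $\omega(x,\cdot)$ and using $\oint_{\beta_{b}}\omega(x,\cdot)=\nu_{b}(x)$ together with $\oint_{\alpha_{b}}\omega(x,\cdot)=0$ from \eqref{eq:nu} gives $\oint_{\widetilde{\alpha}_{a}}\omega(x,\cdot)=(C\nu^{T}(x))_{a}$, while applying it to $\nu_{d}$ and using \eqref{eq:period}, $\oint_{\alpha_{b}}\nu_{d}=\tpi\,\delta_{bd}$ and $\tau=\tpi\,\Omega$ gives $\oint_{\widetilde{\alpha}_{a}}\nu_{d}=\tpi\,(C\Omega+D)_{ad}=\tpi\,\calM_{ad}$ with $\calM$ as in \eqref{eq:calMNdef}. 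Substituting into the normalisation condition and invoking the linear independence of the $\nu_{c}(x)$ yields the matrix identity $C+\tpi\,\calM\,\mathbf{c}^{T}=0$, so that $\mathbf{c}^{T}=-\tfrac{1}{\tpi}\calN C$.

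The crux is then to prove that $\calN C$ is symmetric, which simultaneously makes $\mathbf{c}=-\tfrac{1}{\tpi}\calN C$ symmetric, consistent with the forced symmetry of $\widetilde{\omega}$. Writing $\calN C=(C\Omega+D)^{-1}C$ and using $\Omega^{T}=\Omega$, the equality $(\calN C)^{T}=\calN C$ is equivalent, after clearing the two inverses, to $(C\Omega+D)C^{T}=C(\Omega C^{T}+D^{T})$, i.e. to $DC^{T}=CD^{T}$; this is precisely the symmetry of $DC^{T}$ recorded in \eqref{eq:ABT_sym}. I expect this transpose-tracking through the inverse $\calN$ to be the only step requiring genuine care. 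With $\calN C$ symmetric, the formula for $\mathbf{c}$ gives exactly \eqref{eq:ommod}.

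Finally, \eqref{eq:smod} should follow immediately. The correction term $\nu(x)\,\mathbf{c}\,\nu^{T}(y)$ is regular at $y=x$ with coincidence value $\nu(x)\,\mathbf{c}\,\nu^{T}(x)$, so inserting \eqref{eq:ommod} into the definition \eqref{eq:projcon} and letting $y\to x$ gives
\[
\widetilde{s}(x)=s(x)+6\,\nu(x)\,\mathbf{c}\,\nu^{T}(x)=s(x)-\frac{6}{\tpi}\,\nu(x)\,\calN C\,\nu^{T}(x),
\]
and $\tfrac{6}{\tpi}=\tfrac{3}{\pi\im}$ reproduces the stated coefficient, completing the proof.
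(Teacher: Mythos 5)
Your proof is correct, and it reaches the result by a ``mirror image'' of the paper's argument. Both start from the same structural step: the difference $\widetilde{\omega}-\omega$ is a holomorphic symmetric $(1,1)$-form, hence equals $\nu(x)\,\mathbf{c}\,\nu^{T}(y)$ for a constant matrix $\mathbf{c}$. The paper then pins down $\mathbf{c}$ by integrating $\widetilde{\omega}$ over the \emph{old} $\alpha_{a}$-cycles, writing $\alpha_{a}=\sum_{c}(-\widetilde{\beta}_{c}C_{ca}+\widetilde{\alpha}_{c}A_{ca})$ via the inverse matrix \eqref{eq:modinv} and invoking the already-established law $\widetilde{\nu}=\nu\calN$ of \eqref{eq:numod}; this produces $\mathbf{c}=-\frac{1}{\tpi}\calN C$ in one stroke, with no matrix inversion, and the symmetry of $\calN C$ comes for free from the symmetry of the holomorphic difference. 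You instead impose the \emph{new} normalisation $\oint_{\widetilde{\alpha}_{a}}\widetilde{\omega}(x,\cdot)=0$ with $\widetilde{\alpha}=C\beta+D\alpha$ from the forward matrix \eqref{eq:hommod}, which has the advantage of using only untransformed data ($\omega$, $\nu$, $\tau$) and not requiring \eqref{eq:numod}, at the cost of having to solve $C+\tpi\,\calM\,\mathbf{c}^{T}=0$ and then separately verify that $\calN C$ is symmetric; your reduction of that symmetry to $DC^{T}=CD^{T}$ via \eqref{eq:ABT_sym} is correct and essentially reproduces what the paper proves later in Lemma~\ref{lem:NOmt} by writing $\calN=A^{T}-C^{T}\Omt$. (Note you could also have obtained the symmetry of $\calN C$ for free, as the paper does, from your own observation that symmetry of $\widetilde{\omega}$ forces $\mathbf{c}=\mathbf{c}^{T}$.) Your passage to \eqref{eq:smod} via the coincidence limit in \eqref{eq:projcon} is exactly the paper's final step.
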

\begin{proof}
	$f(x,y):=\widetilde{\omega}(x,y)-\omega(x,y)$ is a holomorphic symmetric  $(1,1)$ form in $(x,y)$ from \eqref{eq:omega}. Thus $f(x,y) =\nu (x)\calX \nu ^T(y)$  for a symmetric matrix $\calX$ determined by
	\[	\frac{1}{\tpi} \oint_{\alpha_{a}}\widetilde{\omega}(x,\cdot)
	=\frac{1}{\tpi} \oint_{\alpha_{a}}f(x,\cdot)=(\nu(x) \calX)_{a}.
	\]
	But \eqref{eq:modinv} implies $\alpha_{a}=\sum_{c\in\Ip}\left(-\betat_{c}C_{ca}+\alphat_{c}A_{ca}\right)$ so that 
	\[  
    \oint_{\alpha_{a}}\widetilde{\omega}(x,\cdot)=-\sum_{c\in\Ip}\nut_{c}(x)C_{ca}+0=-\left(\nu(x) \calN C\right)_{a},
	\]
	using \eqref{eq:nu}.
	Thus $\calX=-\frac{1}{\tpi} \calN C$ giving \eqref{eq:ommod}. Eqn.~\eqref{eq:smod} follows using \eqref{eq:projcon}.
\end{proof}
$\calN$ enjoys the following properties:
\begin{lemma}\label{lem:NOmt} \leavevmode
	\begin{enumerate}
		\item [(i)] $\calN=A^{T}-C^{T}\Omt$.
		\item [(ii)] $\calN C$ is given by
		\begin{align*}
			(\calN C)_{ab}&=
			\begin{cases}
				\partial_{\Omega_{aa}}\log\det \calM   \mbox{ for }  a=b
				\\
				\half \partial_{\Omega_{ab}} \log\det \calM   \mbox{ for }  a\neq b.
			\end{cases}
		\end{align*}
	\end{enumerate}
\end{lemma}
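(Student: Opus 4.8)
The plan is to establish (i) by a one-line matrix identity and then read off (ii) from the standard formula for the derivative of a log-determinant, using the symmetry of $\calN C$.

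For (i), I would verify the claimed formula $\calN = A^{T}-C^{T}\Omt$ by right-multiplying by $\calM = C\Omega+D$ and checking that the product is the identity. The essential input is $\Omt\calM = (A\Omega+B)\calN\calM = A\Omega+B$, which follows from \eqref{eq:Omegamod} and \eqref{eq:calMNdef}. With this,
\begin{align*}
\left(A^{T}-C^{T}\Omt\right)\calM
&= A^{T}(C\Omega+D)-C^{T}(A\Omega+B)\\
&= \left(A^{T}C-C^{T}A\right)\Omega + \left(A^{T}D-C^{T}B\right).
\end{align*}
Here \eqref{eq:ABT_sym} makes $A^{T}C$ symmetric, so $A^{T}C=C^{T}A$ and the $\Omega$-term drops out, while \eqref{eq:AD-BC} gives $A^{T}D-C^{T}B=\Ig$. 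Hence the product is $\Ig$ and (i) follows. As a byproduct, substituting (i) gives $\calN C = A^{T}C - C^{T}\Omt C$, a difference of symmetric matrices, which recovers the symmetry of $\calN C$ asserted in Lemma~\ref{lem:oms_mod}.

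For (ii), I would begin from $\partial_{\Omega_{ab}}\log\det\calM = \Tr\!\left(\calN\,\partial_{\Omega_{ab}}\calM\right)$ together with $\partial_{\Omega_{ab}}\calM = C\,\partial_{\Omega_{ab}}\Omega$. Treating $\Omega$ as symmetric with independent coordinates $\Omega_{ab}$ for $a\le b$, the derivative $\partial_{\Omega_{ab}}\Omega$ equals the elementary matrix $E_{aa}$ when $a=b$ and $E_{ab}+E_{ba}$ when $a\neq b$. Since $\Tr(\calN C\,E_{ij})=(\calN C)_{ji}$, this yields $(\calN C)_{aa}$ in the diagonal case and $(\calN C)_{ab}+(\calN C)_{ba}$ in the off-diagonal case; the symmetry of $\calN C$ then collapses the latter to $2(\calN C)_{ab}$, producing the factor $\half$.

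The main obstacle is the careful handling of differentiation with respect to the entries of a symmetric matrix: one must keep track that $\Omega_{ab}$ and $\Omega_{ba}$ denote a single coordinate when $a\neq b$, which is precisely what doubles the off-diagonal contribution and accounts for the $\half$. Everything else is routine linear algebra.
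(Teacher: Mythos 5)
Your proposal is correct and takes essentially the same route as the paper: part (i) is the identical computation (the paper inserts $\calM\calN=\Ig$ where you right-multiply by $\calM$, then both use symmetry of $A^{T}C$ and $A^{T}D-C^{T}B=\Ig$), and your Jacobi-formula argument for (ii) is just a repackaging of the paper's explicit cofactor/adjugate expansion of $\det\calM$. In both treatments the essential point is the same: the single coordinate $\Omega_{ab}=\Omega_{ba}$ for $a\neq b$ enters $\calM$ through two columns, doubling the off-diagonal contribution and producing the factor $\half$ via the symmetry of $\calN C$.
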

\begin{proof}
	$A^{T}-C^{T}\Omt=(A^{T}(C\Omega+D)-C^{T}(A\Omega+B))\calN =\calN $ using \eqref{eq:AD-BC} and \eqref{eq:ABT_sym}.  Thus, as already found, $\calN C=A^{T}C-C^{T}\Omt \,C$ is symmetric from \eqref{eq:ABT_sym}.
	The dependence of $\det \calM $ on $\Omega_{aa}$ arises in the $a$-th column of $\calM $ so that
	\begin{align*}
		\partial_{\Omega_{aa}}\det \calM  &
		=\sum_{b\in\Ip}(C_{ba}+0)\cof(\calM )_{ba}=(\adj(\calM )C)_{aa},
	\end{align*}
	for adjugate  $\adj(\calM )$. Thus  $\partial_{\Omega_{aa}}\log \det \calM = (\calN C)_{aa}$ since $\calN :=\calM ^{-1}=\adj(\calM )/\det \calM $. A similar analysis applied for the $a\neq b$ cases where the dependence of $\det \calM $ in $\Omega_{ab}=\Omega_{ba}$  arises both in the $a$-th and $b$-th columns  of $\calM $.
\end{proof}
Combining Lemmas~\ref{lem:oms_mod} and  \ref{lem:NOmt} and recalling Lemma~\ref{lem:nablaMperiod} we obtain \cite{Fa}:
\begin{corollary} $\omega(x,y)$ and $s(x)$ transform under $\Sp(2g,\Z)$ as follows
	\begin{align}
		\widetilde{\omega}(x,y)&=\omega(x,y)
		-\frac{1}{2}
		\sum_{1\le a\le b\le g}\left(\nu_{a}(x)\nu_{b}(y)+\nu_{b}(x)\nu_{a}(y)\right)
		\partial_{ab} \log\det \calM 
		\label{eq:ommod2}
		\\
		\widetilde{s}(x)&=s(x)
		-6\sum_{1\le a\le b\le g}
		\nu_{a}(x)\nu_{b}(x)
		\partial_{ab} \log\det \calM 
		\notag
		\\
		&= s(x)
		-6\, \nabla_{\Mg}(x) \log\det \calM ,
		\label{eq:smod3}
	\end{align}
	where  $\partial_{ab}:=\partial_{\tau_{ab}}=\frac{1}{\tpi}\partial_{\Omega_{ab}}$. 
\end{corollary}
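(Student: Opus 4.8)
The plan is to assemble the Corollary directly from the two preceding lemmas: I would substitute the explicit entries of $\calN C$ given in Lemma~\ref{lem:NOmt}(ii) into the transformation formulae \eqref{eq:ommod} and \eqref{eq:smod} of Lemma~\ref{lem:oms_mod}, and then recognise the resulting period-derivative sum through the full period matrix form of Lemma~\ref{lem:nablaMperiod}. First I would expand the quadratic form as $\nu(x)\calN C\nu^T(y)=\sum_{a,b\in\Ip}\nu_a(x)(\calN C)_{ab}\nu_b(y)$. Since $\calN C$ is symmetric (established in Lemma~\ref{lem:NOmt}), I would regroup this double sum over ordered indices, separating the diagonal $a=b$ contribution from the off-diagonal $a<b$ contribution, writing the latter symmetrically as $(\nu_a(x)\nu_b(y)+\nu_b(x)\nu_a(y))(\calN C)_{ab}$.

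The only step requiring genuine care is the bookkeeping of factors of two, and this is where I would focus attention. Lemma~\ref{lem:NOmt}(ii) gives $(\calN C)_{aa}=\partial_{\Omega_{aa}}\log\det\calM$ but $(\calN C)_{ab}=\tfrac12\partial_{\Omega_{ab}}\log\det\calM$ for $a\neq b$, and this asymmetric factor $\tfrac12$ is precisely what is needed to produce a uniform closed formula. Converting the $\Omega$-derivatives to $\tau$-derivatives via $\partial_{ab}=\tfrac{1}{\tpi}\partial_{\Omega_{ab}}$, the diagonal term contributes $\nu_a(x)\nu_a(y)\,\tpi\,\partial_{aa}\log\det\calM$ while each off-diagonal term contributes $\tfrac12(\nu_a(x)\nu_b(y)+\nu_b(x)\nu_a(y))\,\tpi\,\partial_{ab}\log\det\calM$. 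Both cases then assemble into the single sum $\tpi\cdot\tfrac12\sum_{1\le a\le b\le g}(\nu_a(x)\nu_b(y)+\nu_b(x)\nu_a(y))\partial_{ab}\log\det\calM$, since at $a=b$ the symmetrised numerator doubles and absorbs the $\tfrac12$. Multiplying by the prefactor $-\tfrac{1}{\tpi}$ of \eqref{eq:ommod} then yields \eqref{eq:ommod2}.

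For $s(x)$ I would run the identical computation at $y=x$, first noting $\tfrac{3}{\pi\im}=\tfrac{6}{\tpi}$ so the prefactor in \eqref{eq:smod} becomes $-\tfrac{6}{\tpi}$. At $y=x$ the symmetrised numerator is simply $2\nu_a(x)\nu_b(x)$, so the sum collapses to $-6\sum_{1\le a\le b\le g}\nu_a(x)\nu_b(x)\,\partial_{ab}\log\det\calM$, which is the first line of \eqref{eq:smod3}. Finally I would invoke the full-period-matrix identity $\nabla_{\Mg}(x)F(\tau)=\sum_{1\le a\le b\le g}\nu_a(x)\nu_b(x)\partial_{ab}F(\tau)$ recorded just after Lemma~\ref{lem:nablaMperiod}, applied to $F=\log\det\calM$, to rewrite this as $-6\,\nabla_{\Mg}(x)\log\det\calM$, giving the second line of \eqref{eq:smod3}. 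I expect no conceptual obstacle here: the result is essentially a repackaging of Lemmas~\ref{lem:oms_mod}, \ref{lem:NOmt} and \ref{lem:nablaMperiod}, with the factor-of-two accounting between the diagonal and off-diagonal entries of $\calN C$ being the sole point that demands care.
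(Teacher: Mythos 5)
Your proposal is correct and follows exactly the route the paper intends: the paper offers no written proof beyond the phrase ``Combining Lemmas~\ref{lem:oms_mod} and \ref{lem:NOmt} and recalling Lemma~\ref{lem:nablaMperiod}'', and your computation is precisely that combination, with the diagonal/off-diagonal factor-of-two bookkeeping and the conversion $\partial_{\Omega_{ab}}=\tpi\,\partial_{ab}$ carried out correctly. Your final appeal to the full-period-matrix identity stated after Lemma~\ref{lem:nablaMperiod}, applied to $F=\log\det\calM$, is also the intended justification of the last line of \eqref{eq:smod3}.
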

We now consider the modular properties  of the differential operators $	\nabmy{\bfm}{\Mg,\bfy}(x)$ and $\calD_{n}$ that appear in \eqref{eq:nab_Om}--\eqref{eq:nab_som} and \eqref{eq:GnEqn} respectively. 
The change of homology basis \eqref{eq:hommod} determines a new marking $(\widetilde{\bm{\alpha}},\widetilde{\bm{\beta}})$ on an isomorphic Riemann surface for which there exists a Schottky uniformisation for some choice of parameters $\widetilde{w}_{a}, \widetilde{w}_{-a}$ and $\widetilde{\rho}_{a}$, for $a\in\Ip$. 
We let $\PsiMt(x,y)$ denote the corresponding weight $(2,-1)$ quasiform of Lemma~\ref{lem:nablayM} and $\nablat_{\Mg}(x)$ and $\nabmyt{\bfm}{\Mg,\bfy}(x)$ the differential operators corresponding to Lemma~\ref{lem:nablaM} and \eqref{eq:nablayMg}, respectively.
\begin{proposition}\label{prop:nabla_mod}
	Under $\Sp(2,\Z)$ modular transformations of the homology basis we find
	\begin{enumerate}
		\item [(i)]$\nablat_{\Mg}(x) =	\nabla_{\Mg}(x)$,
		\item [(ii)]$\PsiMt(x,y)=\PsiM(x,y)$.
		\item [(iii)] $\nabmyt{\bfm}{\Mg,\bfy}(x) =	\nabmy{\bfm}{\Mg,\bfy}(x)$.
	\end{enumerate}
\end{proposition}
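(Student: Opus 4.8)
The three claims are tightly linked, so the plan is to treat (i) and (ii) as the real content and deduce (iii) for free. Recalling the definition \eqref{eq:nablayMg}, the operators $\nabmyt{\bfm}{\Mg,\bfy}(x)$ and $\nabmy{\bfm}{\Mg,\bfy}(x)$ are assembled termwise from $\nablat_{\Mg}(x),\PsiMt$ and from $\nabla_{\Mg}(x),\PsiM$ respectively; hence (i) and (ii) give (iii) immediately, and conversely the $n=1$, $\bm{m}=1$ case of (iii) reduces to (ii) once (i) is known. So I would prove (i) first, then (ii), then read off (iii).

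For (i), the key observation is that $\nabla_{\Mg}(x)$ and $\nablat_{\Mg}(x)$ are both tangent vector fields on the intrinsic moduli space $\Mg$, so it suffices to check that they agree on a family of functions whose differentials span $T^{*}(\Mg)$; the period entries will do. First I would record the classical transformation $d\Omt=\calN^{T}d\Omega\,\calN$, obtained by differentiating \eqref{eq:Omegamod} and using $A-\Omt C=\calN^{T}$, which follows from Lemma~\ref{lem:NOmt}(i) and the symmetry of $\Omt$. Pairing this with the vector field $\nabla_{\Mg}(x)$ and invoking Rauch's formula \eqref{eq:nab_Om} together with $\nut=\nu\calN$ from \eqref{eq:numod} yields
\[
\nabla_{\Mg}(x)\Omt_{kl}=\frac{1}{\tpi}\sum_{a,b}\calN_{ak}\calN_{bl}\,\nu_{a}(x)\nu_{b}(x)=\frac{1}{\tpi}\nut_{k}(x)\nut_{l}(x).
\]
Since Rauch's formula in the tilded marking reads $\nablat_{\Mg}(x)\Omt_{kl}=\frac{1}{\tpi}\nut_{k}(x)\nut_{l}(x)$, the two operators agree on every $\Omt_{kl}$; as the $d\Omt_{kl}$ span $T^{*}(\Mg)$ (being the invertible $\calN\otimes\calN$ image of the $d\Omega_{ab}$), this gives (i). Alternatively, (i) is immediate from the marking-independent Ahlfors characterization of Proposition~\ref{prop:nablaM}.

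For (ii), I would use that $\PsiM$ is the unique $\bm{A}$-independent weight $(2,-1)$ quasiform determined by the defining relation \eqref{eq:nab_nu} through the explicit formula of Lemma~\ref{lem:PsiM}. It therefore suffices to verify that the \emph{untilded} operator satisfies the tilded version of \eqref{eq:nab_nu}, namely $\nabmy{1}{\Mg,y}(x)\nut_{a}(y)=\widetilde{\omega}(x,y)\nut_{a}(x)$, since part (i) lets me replace $\nablat_{\Mg}(x)$ by $\nabla_{\Mg}(x)$ in the tilded defining equation, and uniqueness then forces $\PsiMt=\PsiM$. Writing $\nut_{a}(y)=\sum_{c}\calN_{ca}\nu_{c}(y)$ and applying the Leibniz property of Remark~\ref{rem:delyx} with each moduli function $\calN_{ca}$ treated as weight $0$, the left side splits as $\sum_{c}(\nabla_{\Mg}(x)\calN_{ca})\nu_{c}(y)+\sum_{c}\calN_{ca}\,\nabmy{1}{\Mg,y}(x)\nu_{c}(y)$. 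The second sum is $\omega(x,y)\nut_{a}(x)$ by \eqref{eq:nab_nu}. For the first, Lemma~\ref{lem:NOmt}(i) gives $\nabla_{\Mg}(x)\calN_{ca}=-\sum_{k}C_{kc}\,\nabla_{\Mg}(x)\Omt_{ka}$, which by part (i) equals $-\tfrac{1}{\tpi}\nut_{a}(x)\sum_{k}C_{kc}\nut_{k}(x)$; summing against $\nu_{c}(y)$ collapses to $-\tfrac{1}{\tpi}\nut_{a}(x)\,\nu(x)\calN C\nu^{T}(y)$. Combining the two contributions and applying the transformation law \eqref{eq:ommod} for $\widetilde{\omega}$ produces exactly $\widetilde{\omega}(x,y)\nut_{a}(x)$, as required.

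The main obstacle is precisely this last computation in (ii): the nontrivial point is that the moduli derivative of the cocycle factor $\calN$, evaluated through Rauch's formula from part (i), reproduces exactly the anomalous term $-\tfrac{1}{\tpi}\nu\calN C\nu^{T}$ that distinguishes $\widetilde{\omega}$ from $\omega$ in \eqref{eq:ommod}; recognizing this cancellation is the crux. Everything else is bookkeeping, and (iii) then follows directly by substituting the invariances (i) and (ii) into \eqref{eq:nablayMg}.
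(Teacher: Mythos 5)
Your proposal is correct and its core is the same as the paper's: for (i) you show the two vector fields agree on every transformed period matrix entry, using Rauch's formula \eqref{eq:nab_Om}, the transformation law \eqref{eq:Omegamod} and Lemma~\ref{lem:NOmt}(i) (your identity $d\Omt=\calN^{T}d\Omega\,\calN$ is exactly the paper's multiply-by-$\calM$ computation in disguise); for (ii) you verify that the \emph{untilded} operator satisfies the tilded version of \eqref{eq:nab_nu}, which is precisely the paper's central computation, with the moduli derivative of $\calN$ reproducing the anomalous term in \eqref{eq:ommod}; and (iii) is read off from \eqref{eq:nablayMg} in both. The one genuine divergence is the final step of (ii): the paper subtracts the two defining equations to get $d_{y}\bigl((\PsiMt-\PsiM)(x,y)\,\nut_{a}(y)\bigr)=0$ and then invokes Riemann--Roch (no nonzero holomorphic weight $-1$ forms for $g\ge 2$) to kill the difference, whereas you appeal to uniqueness of the solution of the linear system underlying Lemma~\ref{lem:PsiM}. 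Your variant is sound, and is essentially the alternative the paper itself sketches in the remark following the proposition; its only cost is that Cramer's rule pins down $\PsiMt-\PsiM$ only away from the zero locus of the Wronskian denominator in \eqref{eq:PsiM}, so one finishes by meromorphic continuation, a wrinkle the Riemann--Roch argument sidesteps. One caution on an aside: your claim that (i) is ``immediate'' from Proposition~\ref{prop:nablaM} is circular as stated, since the canonical Ahlfors map there is realized through the Schottky uniformisation, which itself changes with the marking --- marking-independence of that realization is essentially the content of (i); fortunately your main argument for (i) does not rely on this.
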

\begin{proof}
We first prove (i). Rauch's formula \eqref{eq:nab_Om} in the $(\widetilde{\bm{\alpha}},\widetilde{\bm{\beta}})$ marking gives
	\begin{align}\label{eq:Rauchmod}
		\nablat_{\Mg}(x)\widetilde{\tau}_{ab}
		=\widetilde{\nu}_{a}(x)\widetilde{\nu}_{b}(x).
	\end{align}
	Consider 
	\begin{align*}
		\sum_{b\in\Ip}\nabla_{\Mg}(x)\left(\Omt_{ab}\right)\calM_{bc}&=
		\sum_{b\in\Ip}\nabla_{\Mg}(x)\left((A\Omega+B)\calN \right)_{ab}\calM_{bc}
		\\
		&=
		\nabla_{\Mg}(x)(A\Omega+B)_{ac}
		-\sum_{b\in\Ip}\Omt_{ab}\nabla_{\Mg}(x)\calM_{bc},
	\end{align*}
	using $\calN \calM =\Ig$. Rauch's formula in the original  $(\bm{\alpha,\beta})$ marking implies
	\begin{align*}
		\tpi\sum_{b\in\Ip}\nabla_{\Mg}(x)\left(\Omt_{ab}\right)\calM_{bc}=
		(\nu(x)(A^{T} -C^{T}\Omt) )_{a} \nu_{c}(x)=\nut_{a}(x)\nu_{c}(x),
	\end{align*}
using Lemma~\ref{lem:NOmt} and \eqref{eq:numod}. 
	Multiplying by $\calN $ we therefore find $\nabla_{\Mg}(x)\widetilde{\tau}_{ab}=\nut_{a}(x)\nut_{b}(x)$. 
	Thus comparing to \eqref{eq:Rauchmod} we find $\nabla_{\Mg}(x)\widetilde{\tau}_{ab}=\nablat_{\Mg}(x)\widetilde{\tau}_{ab}$. 
	Choosing any $3g-3$ locally independent components of $\widetilde{\tau}$ it follows that $ \nabla_{\Mg}(x)=\nablat_{\Mg}(x)$. 
	
	We next prove (ii). In  the $(\alphat,\betat)$ marking,  \eqref{eq:nab_nu} gives
	\begin{align}\label{eq:nabnumod}
		\nabla_{\Mg}(x) \,\nut_{a}(y) +
		d_{y}\left(\PsiMt(x,y)\,\nut_{a}(y)\right)=\omegat(x,y)\nut_{a}(x),
	\end{align}
	using part (i). Consider 
	\begin{align*}
		\sum_{a\in\Ip}\nabla_{\Mg,y}^{(1)}(x)\left(\nut_{a}(y)\right)\calM _{ab} &=
		\sum_{a\in\Ip}\nabla_{\Mg}(x) \left(\nut_{a}(y)\right) \calM _{ab}+
		d_{y}\left(\PsiM(x,y)\,\nu_{b}(y)\right)
		\\
		&= 
		\nabla_{\Mg}(x) \nu_{b}(y)-\sum_{a\in\Ip}(\nu(y) \calN )_{a}\nabla_{\Mg}(x) \calM_{ab}+
		d_{y}\left(\PsiM(x,y)\,\nu_{b}(y)\right)
		\\
		&=\nabla_{\Mg,y}^{(1)}(x)\,\nu_{b}(y)-\frac{1}{\tpi}\nu(y)\calN C\nu^{T}(x) \nu_{b}(x),
	\end{align*}
	much as before. Multiplying by $\calN$ and using \eqref{eq:nab_nu} we obtain 
	\begin{align*}
		\nabla_{\Mg,y}^{(1)}(x) \nut_{a}(y)=
		\left(\omega(x,y)-\frac{1}{\tpi}\nu(y)\calN C\nu^{T}(x) \right)\nut_{a}(x)
		=\omegat(x,y)\nut_{a}(x),
	\end{align*}
	from \eqref{eq:numod}. Comparing with \eqref{eq:nabnumod} we conclude that for all $a\in\Ip$
	\begin{align*}
		d_{y}\left(\left(\PsiMt(x,y)-\PsiM(x,y)\right)\,\nut_{a}(y)\right)=0.
	\end{align*}
$\PsiMt(x,y)-\PsiM(x,y)$ is holomorphic in $x$ and $y$ so that  $\PsiMt(x,y)-\PsiM(x,y)=\Theta(x)\nut_{a}(y)^{-1}$ for some $\Theta\in\calH_{2}$. But $\nut_{a}(y)^{-1}$ cannot be holomorphic in $y$ by the Riemann-Roch theorem\footnote{The space of holomorphic weight $-1$ forms is trivial for $g\ge 2$.}. Therefore (ii) holds. (i) and (ii) imply (iii) from definition \eqref{eq:nablayMg}.
\end{proof}
\begin{remark}
One can also prove $\PsiMt(x,y)=\PsiM(x,y)$ directly from the expression \eqref{eq:PsiM} together with \eqref{eq:numod}, \eqref{eq:ommod} and that $\nabla_{\Mg}(x)\calN=-\calN\left(\nabla_{\Mg}(x)\calM\right)\calN=-\calN C\nu(x)^{T }\nu(x)\calN$.
\end{remark}
Lastly, we consider the modular properties of  $\calD_n(\bm{z})$. $\calD_n(\bm{z})$ depends on $c$, $\omega(x,y)$, $\nu_{a}(x)$,  $s(x)$ and $\partial_{ab}$  
for $3g-3$ locally  independent $\tau$ components $\tauK:=\ldots,\tau_{ab},\ldots$ for $(a,b)\in\K$.
Let $\widetilde{\calD}_{n}(\bm{z})$ denote the $\Sp(2g,\Z)$ transformation \eqref{eq:hommod} of $\calD_n(\bm{z})$ using \eqref{eq:numod}-\eqref{eq:smod}.
\begin{theorem}
	\label{theor:Dnmod}
Let $F(\tauK)$ be a locally differentiable function on $\Mg$. 
	\begin{align}
		\widetilde{\calD}_{n}(\bm{z})
		 \left( \det (\calM)^{c/2}F(\tauK)\right)=
		\det(\calM )^{c/2}\calD_n(\bm{z})  F(\tauK).
		\label{eq:OnMod}
	\end{align}
\end{theorem}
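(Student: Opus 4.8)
The plan is to prove \eqref{eq:OnMod} by induction on $n$, exploiting the fact that the recursion \eqref{eq:Dnrec} of Theorem~\ref{theor:DnIter} is a marking-independent statement. Since $\calD_{n}(\bm{z})$ is assembled entirely from Virasoro-graph weights built out of $c$, $\omega$, $s$, $\nu_{a}$ and the moduli derivatives $\partial_{ab}$, its modular transform $\widetilde{\calD}_{n}(\bm{z})$ is nothing but the operator produced by the identical graph construction in the $(\widetilde{\bm{\alpha}},\widetilde{\bm{\beta}})$ marking, with $\omega,s,\nu_{a},\partial_{ab}$ replaced by $\widetilde{\omega},\widetilde{s},\widetilde{\nu}_{a},\widetilde{\partial}_{ab}$. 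Consequently $\widetilde{\calD}_{n}(\bm{z})$ obeys \eqref{eq:Dnrec} verbatim with every quantity tilded. The three transformation inputs I will need are: (a) $\widetilde{\nabla}_{\Mg,\bm{z}}^{(\bm{2})}(z_{n+1})=\nabla_{\Mg,\bm{z}}^{(\bm{2})}(z_{n+1})$ by Proposition~\ref{prop:nabla_mod}(iii); (b) $\widetilde{s}(z_{n+1})=s(z_{n+1})-6\,\nabla_{\Mg}(z_{n+1})\log\det\calM$ by \eqref{eq:smod3}; and (c) the modular invariance $\widetilde{\omega}_{2}(x,y)=\omega_{2}(x,y)$.

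The main obstacle is establishing (c). I would derive it from the defining relation \eqref{eq:nab_som}, namely $\omega_{2}(x,y)=\omega(x,y)^{2}-\tfrac{1}{6}\nabla_{\Mg,y}^{(2)}(x)s(y)$, which holds in any marking. Writing the holomorphic correction as $P(x,y):=\tfrac{1}{\tpi}\nu(x)\calN C\,\nu^{T}(y)$, Lemma~\ref{lem:oms_mod} gives $\widetilde{\omega}(x,y)=\omega(x,y)-P(x,y)$ and $\widetilde{s}(y)=s(y)-6P(y,y)$, while Proposition~\ref{prop:nabla_mod}(iii) leaves $\nabla_{\Mg,y}^{(2)}(x)$ unchanged. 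Substituting these into \eqref{eq:nab_som} written in the tilde marking yields $\widetilde{\omega}_{2}(x,y)=\omega_{2}(x,y)+\bigl(-2\omega(x,y)P(x,y)+P(x,y)^{2}+\nabla_{\Mg,y}^{(2)}(x)P(y,y)\bigr)$, so it suffices to prove $\nabla_{\Mg,y}^{(2)}(x)P(y,y)=2\omega(x,y)P(x,y)-P(x,y)^{2}$. I would obtain this by the Leibniz rule of Remark~\ref{rem:delyx} together with \eqref{eq:nab_nu}: the connection part of $\nabla_{\Mg,y}^{(2)}(x)$ acting on each $\nu_{a}(y)\nu_{b}(y)$ produces $\omega(x,y)\bigl(\nu_{a}(x)\nu_{b}(y)+\nu_{a}(y)\nu_{b}(x)\bigr)$, contributing $2\omega(x,y)P(x,y)$ after using the symmetry of $\calN C$; and the moduli part $\nabla_{\Mg}(x)$ hits the coefficients $(\calN C)_{ab}$, which by the variation of $\calN C$ computed from Rauch's formula \eqref{eq:nab_Om} and $\calN\calM=\Ig$ (cf. the remark following Proposition~\ref{prop:nabla_mod}) contributes $-P(x,y)^{2}$. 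The two bracketed corrections then cancel, giving (c). This computation is where the $c$-dependent automorphy factor ultimately originates, so I expect it to be the delicate step.

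With (a)--(c) in hand the induction is routine. The base case $n=0$ is immediate since $\calD_{0}=1$ (and $\calD_{-1}=0$). For the inductive step I apply the tilded recursion \eqref{eq:Dnrec} to $\det(\calM)^{c/2}F(\tauK)$, using (a) and (c) to replace $\widetilde{\nabla}_{\Mg,\bm{z}}^{(\bm{2})}$ and $\widetilde{\omega}_{2}$ by their untilded forms, and invoking the inductive hypotheses $\widetilde{\calD}_{n}\bigl(\det(\calM)^{c/2}F\bigr)=\det(\calM)^{c/2}\calD_{n}F$ and likewise for $\widetilde{\calD}_{n-1}$. Because $\det(\calM)^{c/2}$ is a function of the moduli alone, the connection pieces of $\nabla_{\Mg,\bm{z}}^{(\bm{2})}(z_{n+1})$ (the $\PsiM$ terms) commute past it, and only the moduli-derivative part $\nabla_{\Mg}(z_{n+1})$ acts on it via Leibniz, giving $\nabla_{\Mg}(z_{n+1})\det(\calM)^{c/2}=\tfrac{c}{2}\det(\calM)^{c/2}\,\nabla_{\Mg}(z_{n+1})\log\det\calM=\tfrac{c}{12}\det(\calM)^{c/2}\bigl(s(z_{n+1})-\widetilde{s}(z_{n+1})\bigr)$ by (b). This term combines with the $\tfrac{c}{12}\widetilde{s}(z_{n+1})$ occurring in the tilded recursion to restore $\tfrac{c}{12}s(z_{n+1})$, so that all $\widetilde{s}$ corrections cancel.

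Collecting the surviving contributions then reproduces exactly $\det(\calM)^{c/2}$ times the untilded recursion \eqref{eq:Dnrec} applied to $F$, which equals $\det(\calM)^{c/2}\calD_{n+1}(\bm{z},z_{n+1})F$. This closes the induction and establishes \eqref{eq:OnMod}.
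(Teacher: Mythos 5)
Your proof is correct and follows the same overall strategy as the paper's: induction on $n$ through the recursion of Theorem~\ref{theor:DnIter}, using (a) $\nablat_{\Mg,\bm{z}}^{(\bm{2})}(z_{n+1})=\nabla_{\Mg,\bm{z}}^{(\bm{2})}(z_{n+1})$, (b) the transformation \eqref{eq:smod3} of $s$, and (c) $\omegat_{2}=\omega_{2}$; your inductive step, where $\nabla_{\Mg}(z_{n+1})\det(\calM)^{c/2}=\tfrac{c}{12}\det(\calM)^{c/2}\bigl(s(z_{n+1})-\widetilde{s}(z_{n+1})\bigr)$ cancels the $\widetilde{s}$ term of the tilded recursion, is exactly the paper's computation. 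The one place you genuinely diverge is the justification of (c). The paper gets it in a single parenthetical from Proposition~\ref{prop:nabla_mod}(ii): since $\omega_{2}(x,y)=\PsiM^{(0,3)}(x,y)\,dy^{3}$ (the ambiguity between $\Psi_{2}$ and $\PsiM$ is a quadratic polynomial in $y$, killed by the third $y$-derivative in \eqref{eq:omegaN}), the invariance $\PsiMt=\PsiM$ immediately gives $\omegat_{2}=\omega_{2}$. Your alternative route — running \eqref{eq:nab_som} in both markings and reducing (c) to the identity $\nabla_{\Mg,y}^{(2)}(x)P(y,y)=2\omega(x,y)P(x,y)-P(x,y)^{2}$ for $P(x,y)=\tfrac{1}{\tpi}\nu(x)\calN C\nu^{T}(y)$, verified via \eqref{eq:nab_nu}, the Leibniz rule, Rauch's formula \eqref{eq:nab_Om} and $\nabla_{\Mg}(x)\calN=-\calN\left(\nabla_{\Mg}(x)\calM\right)\calN$ — is correct; indeed the matrix variation you need is precisely the one recorded in the remark following Proposition~\ref{prop:nabla_mod}. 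But be aware it is more work than necessary and is not logically independent of the paper's shortcut: your use of Proposition~\ref{prop:nabla_mod}(iii) to drop the tilde on $\nabla_{\Mg,y}^{(2)}(x)$ already rests on part (ii), i.e.\ on $\PsiMt=\PsiM$, which by itself yields (c) directly. What your derivation buys is a nontrivial consistency check of the differential equations \eqref{eq:nab_Om}--\eqref{eq:nab_som} under modular transformations, at the cost of invoking machinery that already implies the result.
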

\begin{proof}
	We prove the result by induction in $n$. The result is trivially true for $n=0$. For $n=1$  we have 
	\begin{align*}
		\widetilde{\calD}_{1}(z_{1}) \left( \det (\calM )^{c/2}F\right)&=
		\left(\nabla_{\Mg}(z_{1})+ \frac{c}{12}\widetilde{s}(z_{1})\right)\left( \det (\calM )^{c/2}F\right)\\
		&=\det (\calM )^{c/2}\calD_{1}(z_{1})F,
	\end{align*}
	using  Proposition~\ref{prop:nabla_mod}~(i) and \eqref{eq:smod3}.
	By induction let us assume that \eqref{eq:OnMod} holds for $n$ and $n-1$.
Then Theorem~\ref{theor:DnIter} implies that
	\begin{align*}
		\widetilde{\calD}_{n+1}(\bm{z},z_{n+1})\det (\calM )^{c/2}F
		&=
		 \left(
		 \nabla_{\Mg,\bm{z}}^{(\bm{2})}(z_{n+1})
		 +\frac{c}{12}\widetilde{s}(z_{n+1})\right)
		 \left( \det (\calM )^{c/2}{\calD}_{n}(\bm{z})F\right)
		 \\
		 &
		\quad+
		\frac{c}{2}\sum_{k=1}^{n}\omega_2(z_{k},z_{n+1})\det (\calM )^{c/2}
		\calD_{n-1}(\ldots,\widehat{z_k},\ldots)F,
	\end{align*}
where from Proposition~\ref{prop:nabla_mod} we know
$\nablat_{\Mg,\bm{z}}^{(\bm{2})}(z_{n+1})=\nabla_{\Mg,\bm{z}}^{(\bm{2})}(z_{n+1})$ and $\omegat_{2}(x,y)=\omega_{2}(x,y)$ (since $\PsiMt(x,y)=\PsiM(x,y)$). 
Then \eqref{eq:smod3} implies 
\begin{align*}
&\widetilde{\calD}_{n+1}(\bm{z},z_{n+1})\det (\calM )^{c/2}F
\\
&=
\det (\calM )^{c/2} \left(\left(
\nabla_{\Mg,\bm{z}}^{(\bm{2})}(z_{n+1})
+\frac{c}{12}s(z_{n+1})\right)
 {\calD}_{n}
+\frac{c}{2}\sum_{k=2}^n \omega_{2}(z_{1},z_k) {\calD}_{n-1}\right)F
\\
&=\det (\calM )^{c/2}{\calD}_{n+1}(\bm{z},z_{n+1})F,
\end{align*}
by Theorem~\ref{theor:DnIter} again.
\end{proof}
\begin{corollary}\label{cor:Teich}
	Suppose that $F_{k}(\tauK)$ is a weight $k$ Teichm\"uller form on $\Mg$ for $\Sp(2g,\Z)$ with a multiplier system. Then, with $c=2k$, $\calD_{n}(\bm{z})F_{k}(\tauK)$ also transforms like a weight $k$ Teichm\"uller form with the same multiplier system.
\end{corollary}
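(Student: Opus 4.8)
The plan is to deduce the corollary directly from Theorem~\ref{theor:Dnmod}, the only genuine input being that the automorphy factor $\det(\calM)^{c/2}$ appearing there coincides with the Teichm\"uller weight factor $\det(\calM)^{k}$ exactly when $c=2k$.

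First I would record precisely what the hypothesis supplies. By definition, a weight $k$ Teichm\"uller form $F_{k}(\tauK)$ on $\Mg$ for $\Sp(2g,\Z)$ with multiplier system $\chi$ satisfies, under the change of homology basis \eqref{eq:hommod} by $\left[\begin{smallmatrix}A&B\\C&D\end{smallmatrix}\right]\in\Sp(2g,\Z)$,
\[
\widetilde{F}_{k}=\chi\,\det(\calM)^{k}\,F_{k},
\]
where $\widetilde{F}_{k}$ denotes the form in the new marking, $\calM=C\Omega+D$ as in \eqref{eq:calMNdef}, and $\chi$ is independent of the moduli (a constant for the given symplectic matrix, e.g.\ a root of unity for the multiplier system). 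To say that $H:=\calD_{n}(\bm{z})F_{k}$ transforms as a weight $k$ Teichm\"uller form with the same multiplier is precisely to assert that the object $\widetilde{H}:=\widetilde{\calD}_{n}(\bm{z})\widetilde{F}_{k}$, computed entirely in the new marking, obeys $\widetilde{H}=\chi\,\det(\calM)^{k}H$.

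Next I would substitute the transformation law for $F_{k}$ and pull the constant $\chi$ through the operator $\widetilde{\calD}_{n}(\bm{z})$, which is legitimate since $\chi$ does not depend on $\tau$, giving
\[
\widetilde{\calD}_{n}(\bm{z})\widetilde{F}_{k}
=\chi\,\widetilde{\calD}_{n}(\bm{z})\bigl(\det(\calM)^{k}F_{k}\bigr).
\]
Setting $c=2k$ so that $k=c/2$, the bracketed expression is exactly $\det(\calM)^{c/2}F_{k}$; since $F_{k}(\tauK)$ is a locally differentiable function on $\Mg$ I may then apply Theorem~\ref{theor:Dnmod} verbatim to obtain
\[
\widetilde{\calD}_{n}(\bm{z})\bigl(\det(\calM)^{c/2}F_{k}\bigr)
=\det(\calM)^{c/2}\calD_{n}(\bm{z})F_{k}
=\det(\calM)^{k}\calD_{n}(\bm{z})F_{k}.
\]
Combining the two displays yields $\widetilde{H}=\chi\,\det(\calM)^{k}H$, which is the desired transformation law for $\calD_{n}(\bm{z})F_{k}$.

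The argument is essentially formal once Theorem~\ref{theor:Dnmod} is available, so I do not anticipate a serious obstacle; the single point requiring care is the bookkeeping of automorphy factors, namely verifying that the $c$-dependent factor $\det(\calM)^{c/2}$ produced by Theorem~\ref{theor:Dnmod} matches the Teichm\"uller weight $\det(\calM)^{k}$ under the normalization $c=2k$, and checking that the constancy of $\chi$ in $\tau$ permits it to commute past the differential operator $\widetilde{\calD}_{n}(\bm{z})$.
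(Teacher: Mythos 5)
Your proposal is correct and follows essentially the same route as the paper's own proof: substitute the transformation law $F_{k}(\widetilde{\tau}_{\calK})=\varepsilon\det(\calM)^{k}F_{k}(\tauK)$, pull the constant multiplier through $\widetilde{\calD}_{n}(\bm{z})$, and apply Theorem~\ref{theor:Dnmod} with $c=2k$ so that $\det(\calM)^{c/2}=\det(\calM)^{k}$. Your explicit remarks on the constancy of the multiplier and the identification of automorphy factors are exactly the (implicit) bookkeeping in the paper's two-line argument.
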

\begin{proof}
Let $\widetilde{\tau}_{\calK}$ denote an $\Sp(2g,\Z)$ transformation on ${\tauK}$. Then $F_{k}(\widetilde{\tau}_{\calK})=\varepsilon\det(\calM)^{k}F_{k}(\tauK)$ for some multiplier $\varepsilon$. Therefore \eqref{eq:OnMod} implies that 
\begin{align*}
	\widetilde{\calD}_{n}(\bm{z})F_{k}(\widetilde{\tau}_{\calK})
	=\varepsilon \widetilde{\calD}_{n}(\bm{z})
	\left( \det (\calM)^{k}F_{k}(\tauK)\right)=
	\varepsilon\det(\calM )^{k}\calD_n(\bm{z})  F_{k}(\tauK).
\end{align*}
\end{proof}
We conclude with some brief remarks on the significance of  Theorems~\ref{theor:DnIter} and \ref{theor:Dnmod} in forthcoming work \cite{T2}. Following the seminal work of Friedan and Shenker in \cite{FS}, it is believed that the conformal anomaly in CFT obstructs the existence of a global partition function on moduli space for $g\ge 2$. The genus $g$ partition $Z_{M}^2$ function \cite{T1} for the rank 2 Heisenberg VOA $M^{2}$  is given by the Montonen-Zograf product formula \cite{Mo,Zo} which is 
the holomorphic part of the Hodge line bundle on $\Schg$.  $Z_{M}^2$ is a convergent function on  $\Schg$ but by Mumford's theorem \cite{Mu2} cannot be projected down to moduli space $\Mg$ (due to a non-zero Chern class which corresponds to the conformal anomaly in physics language). However, according to Friedan and Shenker, all (suitable) CFTs of a given central charge $c$ are believed to share the same conformal anomaly. In the language of this paper, it is conjectured that  $\Theta_{V}=Z_{V}Z_{M}^{-c}$ can be globally defined on $\Mg$ for some suitable class of VOAs.
For a lattice VOA $V_{L}$, for a rank $c$ even Euclidean lattice $L$,  we know \cite{T1} that $\Theta_{V_{L}}$ is the Siegel lattice theta series $\sum_{\bm{\lambda}\in L^{g}}e^{\im\pi\,\bm{\lambda}.\Omega.\bm{\lambda}}$ (where the sum is over $(\lambda_{1},\ldots,\lambda_{g})$ for all $\lambda_{a}\in L$).
$\Theta_{V_{L}}$ is a Teichm\"uller modular form, with a multiplier system, of weight $c/2$ and globally defined on $\Mg$. 
It is also been shown in \cite{C} that for a holomorphic $V$  then $\Theta_{V}$ is a Teichm\"uller form of weight $c/2$. 

For many rational VOAs, such as Virasoro minimal models (e.g. \cite{DFMS}), there exists a singular Virasoro descendant vector $v$ of weight $2k\ge 4$ with zero 1-point genus one correlation function giving rise to a modular differential equation of order $k$ for the genus one partition function for $V$ and its modules. Modular differential equations based on Virasoro vectors can also be constructed by other methods e.g. \cite{T3}. These approaches can be extended to genus $g$ partition functions. In particular, we may extract  $Z_{V}(v,x)Z_{M}^{-c}$ from $\calD_{k}(\bm{z}) \Theta_{V}$ as described in the proof of Proposition~\ref{prop_Ggen}. Then, for a Virasoro singular vector, $Z_{V}(v,x)=0$ determines an order $k$ partial differential equation for $\Theta_{V}$ whose coefficients are holomorphic $2k$-forms. 
Following Remark~\ref{rem:V modules}, we also find that the normalized partition function for any $V$-modules, satisfy the same partial differential equation. Lastly, provided the various partition functions are globally defined on $\Mg$, they form a vector-valued Teichm\"uller form, with a multiplier system, of weight $c/2$ using Corollary~\ref{cor:Teich} to Theorem~\ref{theor:Dnmod}. Examples of VOAs with order 4 partial differential equations for $\Theta_{V}$ (and $V$-module partition functions) will be  discussed in \cite{T2}.

\end{document}